\titleformat{\section}{\normalsize\bfseries}{\thesection}{1em}{}
\titleformat{\subsection}{\normalsize\bfseries}{\thesubsection}{1em}{}
\numberwithin{equation}{subsection}
\theoremstyle{definition}
\newtheorem{defn}[subsection]{Definition}
\newtheorem{para}[subsection]{}
\newtheorem{rmk}[subsection]{Remark}
\newtheorem{assumption}[subsection]{Assumption}
\newtheorem*{assumption*}{Assumption}
\newtheorem{defn_sub}[subsubsection]{Definition}
\newtheorem{para_sub}[subsubsection]{}
\newtheorem{egg_sub}[subsubsection]{Example}
\newtheorem{rmk_sub}[subsubsection]{Remark}
\theoremstyle{plain}
\newtheorem{prop}[subsection]{Proposition}
\newtheorem{theo}[subsection]{Theorem}
\newtheorem{lem}[subsection]{Lemma}
\newtheorem{cor}[subsection]{Corollary}
\newtheorem*{claim*}{Claim}
\newtheorem*{just*}{Justification}
\newtheorem*{lem*}{Lemma}
\newtheorem*{prop*}{Proposition}
\newtheorem{prop_sub}[subsubsection]{Proposition}
\newtheorem{theo_sub}[subsubsection]{Theorem}
\newtheorem{lem_sub}[subsubsection]{Lemma}
\newtheorem{cor_sub}[subsubsection]{Corollary}
\newcommand{\J}{\mathscr{J}}
\newcommand{\V}{\mathscr{V}}
\newcommand{\ob}{\mathop{\mathsf{ob}}}
\newcommand{\tensor}{\otimes}
\newcommand{\Mnd}{\mathsf{Mnd}}
\newcommand{\Lan}{\mathsf{Lan}}
\newcommand{\Cat}{\text{-}\mathsf{Cat}}
\newcommand{\Cocts}{\text{-}\mathsf{Cocts}}
\newcommand{\underJ}{\kern -0.5ex \mathscr{J}}
\newcommand{\Set}{\mathsf{Set}}
\newcommand{\op}{\mathsf{op}}
\newcommand{\Th}{\mathsf{Th}}
\newcommand{\T}{\mathbb{T}}
\newcommand{\C}{\mathscr{C}}
\newcommand{\B}{\mathscr{B}}
\newcommand{\A}{\mathscr{A}}
\newcommand{\Mod}{\text{-}\mathsf{Mod}}
\newcommand{\y}{\mathsf{y}}
\newcommand{\Alg}{\text{-}\mathsf{Alg}}
\newcommand{\lambdabar}{\bar{\lambda}}
\newcommand{\M}{\mathscr{M}}
\newcommand{\limit}{\mathop{\mathsf{lim}}}
\newcommand{\Ran}{\mathsf{Ran}}
\newcommand{\FinCard}{\mathsf{FinCard}}
\newcommand{\N}{\mathbb{N}}
\newcommand{\Sem}{\mathop{\mathsf{Sem}}}
\newcommand{\colim}{\mathop{\mathsf{colim}}}
\newcommand{\G}{\mathscr{G}}
\newcommand{\Iso}{\mathsf{Iso}}
\newcommand{\D}{\mathscr{D}}
\newcommand{\bbD}{\mathbb{D}}
\newcommand{\scrT}{\mathscr{T}}
\newcommand{\Cts}{\text{-}\mathsf{Cts}}
\newcommand{\X}{\mathscr{X}}
\newcommand{\E}{\mathscr{E}}
\newcommand{\All}{\mathsf{All}}
\newcommand{\CAT}{\text{-}\mathsf{CAT}}
\newcommand{\ALG}{\mathsf{Alg}}
\newcommand{\scrK}{\mathscr{K}}
\newcommand{\Preth}{\mathsf{Preth}}
\newcommand{\Tract}{\text{-}\mathsf{Tract}}
\newcommand{\scrU}{\mathscr{U}}
\newcommand{\Str}{\mathop{\mathsf{Str}}}
\newcommand{\MONADIC}{\mathsf{Monadic}}
\newcommand{\calT}{\mathcal{T}}
\newcommand{\Ner}{\text{-}\mathsf{Ner}}
\newcommand{\SF}{\mathsf{SF}}
\newcommand{\Y}{\mathscr{Y}}
\newcommand{\bbS}{\mathbb{S}}
\newcommand{\sfa}{\mathsf{a}}
\newcommand{\sfE}{\mathsf{E}}
\newcommand{\SET}{\mathsf{SET}}
\newcommand{\sfM}{\mathsf{M}}
\newcommand{\sfm}{\mathsf{m}}
\newcommand{\sft}{\mathsf{t}}
\newcommand{\scrL}{\mathscr{L}}
\begin{document}

\title{\Large \textbf{Enriched structure--semantics adjunctions and monad--theory equivalences for subcategories of arities}}
\author{Rory B. B. Lucyshyn-Wright\let\thefootnote\relax\thanks{We acknowledge the support of the Natural Sciences and Engineering Research Council of Canada (NSERC), [funding reference numbers RGPIN-2019-05274, RGPAS-2019-00087, DGECR-2019-00273].  Cette recherche a \'et\'e financ\'ee par le Conseil de recherches en sciences naturelles et en g\'enie du Canada (CRSNG), [num\'eros de r\'ef\'erence RGPIN-2019-05274, RGPAS-2019-00087, DGECR-2019-00273].} \medskip \\ Jason Parker \medskip
\\
\small Brandon University, Brandon, Manitoba, Canada}
\date{}

\maketitle

\begin{abstract}
Lawvere's \textit{algebraic theories}, or \textit{Lawvere theories}, underpin a categorical approach to general algebra, and Lawvere's adjunction between \textit{semantics} and algebraic \textit{structure} leads to an equivalence between Lawvere theories and finitary \textit{monads} on the category of sets.  Several authors have transported these ideas to a variety of settings, including contexts of category theory enriched in a symmetric monoidal closed category.  In this paper, we develop a general axiomatic framework for enriched structure--semantics adjunctions and monad--theory equivalences for \textit{subcategories of arities}. Not only do we establish a simultaneous generalization of the monad--theory equivalences
previously developed in the settings of Lawvere (1963), Linton (1966), Dubuc (1970), Borceux-Day (1980), Power (1999), Nishizawa-Power (2009), Lack-Rosick\'y (2011), Lucyshyn-Wright (2016), and Bourke-Garner (2019), but also we establish
a structure–semantics theorem that generalizes those given in the first four of these works while applying also to the remaining five, for which such a result has not previously been developed. Furthermore, we employ our axiomatic framework to establish broad new classes of examples of enriched monad--theory equivalences
and structure--semantics adjunctions for subcategories of arities enriched in locally bounded closed categories, including various convenient closed categories that are relevant in topology and analysis and need not be locally presentable.
\end{abstract}

\section{Introduction}\label{sec:intro}

Syntactic presentations of categories of general algebraic structures (or \textit{algebras}) were delineated by Birkhoff \cite{Birkhoff} in terms of operations and equations, with the complication that a given syntactic presentation cannot be recovered as an isomorphism-invariant attribute of its category of algebras $\A$, even when $\A$ is equipped with its underlying-set functor $U:\A \rightarrow \Set$ and so is viewed as a \textit{category over $\Set$}.  Lawvere \cite{Law:PhD} decisively overcame this complication through the insight that the derived operations carried by the algebras in $\A$ are precisely natural transformations $U^n \Rightarrow U$ ($n \in \N$) and thus constitute the morphisms of a category $\Str U$, called the \textit{algebraic structure} of $U$ (or of $\A$), which is an equivalence-invariant attribute of any category $\A$ over $\Set$. Lawvere's decisive methodological advance was to axiomatize a notion of \textit{algebraic theory} (or \textit{Lawvere theory}) as a small category $\scrT$ whose objects are the finite powers $T^n$ of an object $T$, and to define the category $\scrT\Alg$ of \textit{$\scrT$-algebras} as the category of all functors $A:\scrT \rightarrow \Set$ that preserve finite products.  In Lawvere's terminology, the category $\scrT\Alg$ over $\Set$ is called the \textit{semantics} of $\scrT$. Each Lawvere theory $\scrT$ can then be recovered (up to isomorphism) as the algebraic structure of its semantics.  Up to equivalence, the categories of algebras of Lawvere theories are precisely Birkhoff's categories of algebras, since each of the latter is equivalent to the semantics of its algebraic structure.

Lawvere thus established a dual equivalence between the category of Lawvere theories and the category of \textit{algebraic} categories over $\Set$.  Calling a category over $\Set$ \textit{tractable} if its algebraic structure is small, Lawvere established an adjunction between the category of tractable categories over $\Set$ and the opposite of the category of Lawvere theories, in which the left adjoint sends each tractable category over $\Set$ to its algebraic structure, while the fully faithful right adjoint sends each Lawvere theory to its semantics.  The resulting \textit{structure--semantics adjunction} therefore provides a reflective embedding of (the opposite of the category of) Lawvere theories into tractable categories over $\Set$, whose essential image consists of the algebraic categories over $\Set$.  It has been known since the work of Linton \cite{Lintonequational} that these are equivalently the categories of algebras of finitary \textit{monads} on $\Set$, so that by employing Lawvere's structure--semantics adjunction together with its counterpart for monads, one obtains an equivalence between the categories of Lawvere theories and of finitary monads, the classic \textit{monad--theory equivalence} of finitary algebra.

Linton \cite{Lintonequational} also established a structure--semantics adjunction between the category of infinitarily tractable categories over $\Set$ and the opposite of the category of \emph{varietal} theories (i.e.~infinitary Lawvere theories), leading to an equivalence between varietal theories and arbitrary monads on $\Set$.  Dubuc \cite{Dubucsemantics} then generalized these results to the enriched setting by establishing, for a complete and well-powered symmetric monoidal closed category $\V$, a structure--semantics adjunction between the category of \emph{$\V$-tractable} $\V$-categories over $\V$ and the opposite of the category of \emph{$\V$-theories}, from which he deduced an equivalence between $\V$-theories and arbitrary $\V$-monads on $\V$. Borceux and Day \cite{BorceuxDay} later established a structure--semantics adjunction for $\V$-enriched Lawvere theories with natural number arities, for a symmetric monoidal closed \emph{$\pi$-category} $\V$. Power \cite{PowerLawvere} subsequently generalized the classic finitary monad--theory equivalence to the locally presentable enriched setting, by establishing an equivalence between (finitary) \emph{Lawvere $\V$-theories} and finitary $\V$-monads on $\V$ for a locally finitely presentable closed category $\V$. 

A theme of contemporary interest in enriched category theory is the study of classes of enriched monads and theories defined relative to a \emph{subcategory of arities}, i.e.~a full subcategory that is dense (in the enriched sense). This concept dates back to work of Linton \cite{Lintonoutline} and Diers \cite{Diers}, and was used to define the theories and monads with arities studied by Berger, Melli\`{e}s, and Weber \cite{BMW}. It also relates to the work \cite{LR} of Lack and Rosick\'{y}, wherein they establish a monad--theory equivalence between \emph{Lawvere $\Phi$-theories} and \emph{$\Phi$-accessible} $\V$-monads for a class of weights $\Phi$ satisfying their Axiom A. In \cite{EAT}, the first author studied enriched \emph{$\J$-theories} and \emph{$\J$-ary monads} for a \emph{system of arities} $\J \hookrightarrow \V$, which is a (possibly large) subcategory of arities that is closed under the monoidal structure. Generalizing the above monad--theory equivalences of Lawvere, Linton, Dubuc, and Power, the paper \cite{EAT} establishes a monad--theory equivalence for \emph{eleutheric} systems of arities in symmetric monoidal closed categories.    

Building on work by Power \cite{PowerLawvere}, Nishizawa-Power \cite{NishizawaPower}, and Berger-Melli\`{e}s-Weber \cite{BMW}, Bourke and Garner \cite{BourkeGarner} subsequently established a monad--theory equivalence for arbitrary small subcategories of arities in \emph{locally presentable} enriched categories. Specifically, given a small subcategory of arities $\J \hookrightarrow \C$ in a locally presentable $\V$-category $\C$ enriched in a locally presentable closed category $\V$, they showed that the category of $\J$-theories is equivalent to the category of \emph{$\J$-nervous} $\V$-monads on $\C$. 

Neither of the monad--theory equivalences of the first author \cite{EAT} or Bourke-Garner \cite{BourkeGarner} generalizes the other. While the first does not require that $\V$ be locally presentable or that the subcategory of arities $\J$ be small, it nevertheless requires that $\J$ be a \emph{system} of arities that is \emph{eleutheric}, and it also requires that $\C = \V$. And while the second does not require that $\C = \V$ or that $\J$ be eleutheric, it does require that $\C$ and $\V$ be locally presentable, and that $\J$ be small. Moreover, neither work provides a general treatment of enriched structure--semantics adjunctions that would specialize to those established by Lawvere, Linton, Dubuc, and Borceux-Day.     

Motivated by these considerations, in the present paper we develop a general axiomatic framework for studying enriched structure--semantics adjunctions and monad--theory equivalences for subcategories of arities, which generalizes all of the aforementioned results and also yields substantial new classes of examples. Given a $\V$-category $\C$ enriched in a symmetric monoidal closed category $\V$ that we generally only assume has equalizers, a \emph{subcategory of arities} is a (possibly large) full sub-$\V$-category $j : \J \hookrightarrow \C$ that is dense in the enriched sense. In particular, we do \emph{not} generally assume that $\J$ is small, or that $\C$ and $\V$ are (co)complete. A \emph{$\J$-pretheory} is a $\V$-category $\scrT$ equipped with an identity-on-objects $\V$-functor $\tau : \J^\op \to \scrT$, and a \emph{$\J$-theory} is a $\J$-pretheory satisfying a further condition involving \emph{$j$-nerves}. These concepts, for which we adopt the terminology of Bourke and Garner \cite{BourkeGarner}, originate with Linton \cite{Lintonoutline} and Diers \cite{Diers} in the unenriched setting, and were also employed by Nishizawa and Power \cite{NishizawaPower} for finitary enriched Lawvere theories. We base our study of structure--semantics adjunctions and monad--theory equivalences on axiomatic assumptions on the subcategory of arities related to the existence of \emph{free algebras}. Specifically, we say that a subcategory of arities $j : \J \hookrightarrow \C$ is \emph{amenable} if every $\J$-theory has free algebras (in a suitable enriched sense), and is \emph{strongly amenable} if every $\J$-pretheory has free algebras. As we outline in more detail below, we establish wide classes of examples of amenable and strongly amenable subcategories of arities, including all the subcategories of arities employed in the works of Lawvere \cite{Law:PhD}, Linton \cite{Lintonequational}, Dubuc \cite{Dubucsemantics}, Borceux-Day \cite{BorceuxDay}, Power \cite{PowerLawvere}, Nishizawa-Power \cite{NishizawaPower}, Lack-Rosick\'y \cite{LR}, Lucyshyn-Wright \cite{EAT}, and Bourke-Garner \cite{BourkeGarner}. 

Given an amenable subcategory of arities $j : \J \hookrightarrow \C$, we establish a structure--semantics adjunction between the opposite of the category $\Th_{\underJ}(\C)$ of $\J$-theories and the category $\J\Tract(\C)$ of \emph{$\J$-tractable} $\V$-categories over $\C$. This adjunction is idempotent, and restricts to a dual equivalence between $\Th_{\underJ}(\C)$ and the full subcategory $\J\Alg^!(\C) \hookrightarrow \J\Tract(\C)$ consisting of the \emph{strictly $\J$-algebraic} $\V$-categories over $\C$. We establish an intrinsic characterization of $\J$-algebraic $\V$-categories over $\C$ in terms of criteria that generalize the conclusions of Weber's \emph{nerve theorem} \cite[Theorem 4.10]{Webernerve} and so generalize the Bourke-Garner notion of \emph{$\J$-nervous $\V$-monad}, a term that we adopt also in the present setting. We also show that a $\V$-category over $\C$ is $\J$-algebraic iff it is monadic by way of a $\J$-nervous $\V$-monad.  Strikingly, our characterization theorem for $\J$-algebraic $\V$-categories over $\C$ does not directly mandate monadicity in any evident way but nonetheless entails it, so providing a novel \textit{$\J$-nervous monadicity theorem}. We obtain an equivalence $\Th_{\underJ}(\C) \simeq \Mnd_{\underJ}(\C)$ between the category of $\J$-theories and the category $\Mnd_{\underJ}(\C)$ of $\J$-nervous $\V$-monads on $\C$, and this equivalence commutes (up to isomorphism) with semantics in an appropriate sense.

When the subcategory of arities $j : \J \hookrightarrow \C$ is \emph{strongly} amenable, the above structure--semantics adjunction extends to (the opposite of) the category $\Preth_{\underJ}(\C)$ of $\J$-pretheories. We also obtain an idempotent adjunction between $\J$-pretheories and arbitrary $\V$-monads on $\C$, which generalizes the monad--pretheory adjunction established by Bourke and Garner \cite{BourkeGarner} in the locally presentable enriched setting.   
Under the additional assumptions that $\V$ is complete and cocomplete and that $\C$ is complete, we also show that $\Preth_{\underJ}(\C), \Th_{\underJ}(\C)$, and $\Mnd_{\underJ}(\C)$ are all cocomplete, with small colimits therein being \emph{algebraic}, i.e.~sent to limits in $\V\CAT/\C$ by the respective semantics functors. 

We now discuss the classes of examples of amenable and strongly amenable subcategories of arities that we shall establish. We show that every \emph{eleutheric} \cite{EAT, Pres} subcategory of arities $j : \J \hookrightarrow \C$ in an arbitrary $\V$-category $\C$ enriched in a closed category $\V$ with equalizers is amenable, and that (in this context) $\J$-nervous $\V$-monads coincide with the \emph{$\J$-ary} $\V$-monads of \cite{EAT, Pres}. We thus recover the monad--theory equivalence of the first author \cite{EAT}, and the several earlier equivalences that it generalizes, as well as the above structure--semantics adjunctions of Lawvere, Linton, Dubuc, and Borceux-Day. 

Under suitable (co)completeness assumptions on $\C$ and $\V$, we then show that if $j : \J \hookrightarrow \C$ is contained in some eleutheric and \emph{bounded} subcategory of arities \cite{Pres}, then $\J$ is strongly amenable. Most of our examples of eleutheric subcategories of arities also satisfy this further boundedness condition, and hence are strongly amenable. In particular, we remark that any small subcategory of arities in a locally presentable $\V$-category $\C$ enriched in a locally presentable closed category $\V$ is contained in some bounded and eleutheric subcategory of arities, and hence is strongly amenable, which allows us to recover results of Bourke and Garner \cite{BourkeGarner}, including their monad--pretheory adjunction and monad--theory equivalence. 

As another new class of examples, we show that if $j : \J \hookrightarrow \C$ is any small subcategory of arities in a \emph{$\V$-sketchable} $\V$-category $\C$ enriched in a \emph{locally bounded} closed category $\V$, then $\J$ is strongly amenable. Locally bounded closed categories include not only all locally presentable closed categories, but also all symmetric monoidal closed topological categories over $\Set$, including many convenient categories in topology and analysis.  $\V$-sketchable $\V$-categories are the $\V$-categories of structures in $\V$ describable by enriched limit theories. In particular, $\V$ itself is (trivially) $\V$-sketchable, and we deduce that every small subcategory of arities in a locally bounded closed category $\V$ is strongly amenable. Since every locally presentable closed category $\V$ is locally bounded and every locally presentable $\V$-category $\C$ is $\V$-sketchable, we thus obtain another way of recovering results of Bourke and Garner \cite{BourkeGarner}. 

We now briefly discuss some further related works that \emph{a priori} neither generalize nor specialize the present work. In \cite{Street_monads}, Street establishes a general 2-categorical structure--semantics adjunction for monads in a 2-category; however, it is neither clear nor known that the $\J$-nervous $\V$-monads considered in the present paper can be described as (precisely) the monads in a suitable 2-category. In \cite{formal_relative_monads}, Arkor and McDermott extend Street's work in \cite{Street_monads} by developing a 2-categorical treatment of \emph{relative monads}. In the PhD thesis \cite{Arkor_thesis}, Arkor defines a 2-categorical notion of \emph{theory} and uses results from \cite{formal_relative_monads} to establish an equivalence between theories and (relative) monads in a general 2-categorical setting. Arkor then shows that this equivalence specializes to obtain certain instances of the monad--theory equivalence established by the first author in \cite{EAT}, as well as the monad--theory equivalence established by Bourke and Garner in the locally presentable setting in \cite{BourkeGarner}. It is \emph{a priori} possible that some of our results herein could be placed in the 2-categorical context of Arkor's thesis, but this is not yet known. Furthermore, Arkor's thesis does not treat structure--semantics adjunctions (although this is mentioned therein as a direction for future work). Earlier, in the PhD thesis \cite{Avery_thesis}, Avery also defined a general notion of theory in a 2-categorical setting and established structure--semantics adjunctions for such theories; however, in view of the discussion in \cite[6.6, p. 119]{Avery_thesis}, the latter work does not demonstrate that its framework encompasses the enriched Lawvere theories of Nishizawa and Power \cite{NishizawaPower}, which are examples of the more general $\J$-theories considered in the present paper.  Also, Fujii \cite{Fujii_framework} defined a general framework for notions of algebraic theory formulated as monoids in a monoidal category, but it is neither clear nor known that the $\J$-theories considered in the present paper can be described in general as (precisely) the monoids in a monoidal category.

We now outline the paper. In \S\ref{summary} we provide a precise summary of our main results and examples, with references to later definitions and theorems, while \S\ref{background} pertains to notation and background material. In \S\ref{strsemsection} and \S\ref{mnd_th_section} we establish the structure--semantics adjunction and monad--theory equivalence for an amenable subcategory of arities, and we also establish some additional results for small and strongly amenable subcategories of arities. \S\ref{amenableexamples} is devoted to establishing several classes of examples of amenable and strongly amenable subcategories of arities, as previewed above.

In a subsequent paper \cite{Ax}, we shall show (under mild assumptions on $\C$ and $\V$) that small and strongly amenable subcategories of arities $j : \J \hookrightarrow \C$ admit many further useful results pertaining to \emph{presentations} of $\J$-nervous $\V$-monads, which will provide very convenient methods for constructing such monads. After adapting techniques from the authors' work \cite{EP} to the more general context of the present paper, we shall then show that the strong amenability of a small subcategory of arities $j : \J \hookrightarrow \C$ is actually \emph{equivalent} to $\J$ ``supporting presentations" in a certain natural sense. This result will provide further proof of concept for the axiomatics developed in the present paper.    

\bigskip

\noindent\textbf{Acknowledgment.} The authors thank the anonymous referee for various helpful suggestions and for pointing out an error in an earlier version of Proposition \ref{discrete_iso_prop}.

\section{Summary of main results and examples}
\label{summary}

For the reader's convenience, we begin with a precise summary our main results and examples. Most of the concepts involved have been mentioned in \S \ref{sec:intro}, and for convenience we include superscripts referring the reader to the definitions of terms defined later in the paper, as well as references to the theorems we now summarize.

\begin{theo}
\label{mainresults}
Let $\J \hookrightarrow \C$ be a (possibly large) subcategory of arities\textsuperscript{\textnormal{(\S \ref{strsemsection})}} in an arbitrary $\V$-category $\C$ enriched in a symmetric monoidal closed category $\V$ with equalizers.
\begin{enumerate}[leftmargin=*]
\item Suppose that $\J$ is amenable\textsuperscript{\eqref{tractablearities}}. Then we have an idempotent \textnormal{structure--semantics} adjunction between $\J$-theories\textsuperscript{\eqref{Jtheory}} and $\J$-tractable\textsuperscript{\eqref{Jtractable}} $\V$-categories over $\C$ \textnormal{(\ref{strsemadjunction}, \ref{idempotent_adjunction})}:
\[\begin{tikzcd}
	{\Th_{\underJ}(\C)^\op} &&& {} & {\J\Tract(\C)}.
	\arrow[""{name=0, anchor=center, inner sep=0}, "\Sem", shift left=3, from=1-1, to=1-5]
	\arrow[""{name=1, anchor=center, inner sep=0}, "\Str", shift left=3, from=1-5, to=1-1]
	\arrow["\dashv"{anchor=center, rotate=90}, draw=none, from=1, to=0]
\end{tikzcd} \]
From this idempotent adjunction, we obtain an equivalence
\[\begin{tikzcd}
	{\Th_{\underJ}(\C)} &&& {} & {\Mnd_{\underJ}(\C)}
	\arrow[""{name=0, anchor=center, inner sep=0}, "\sfm", shift left=3, from=1-1, to=1-5]
	\arrow[""{name=1, anchor=center, inner sep=0}, "\sft", shift left=3, from=1-5, to=1-1]
	\arrow["\sim"{anchor=center}, draw=none, from=1, to=0]
\end{tikzcd} \]
between $\J$-theories and $\J$-nervous\textsuperscript{\eqref{Jnervous}} $\V$-monads on $\C$, which commutes with semantics in an appropriate sense \eqref{monadtheoryequivalence}.

\item Suppose that $\V$ is complete and cocomplete, and that $\J$ is small and strongly amenable\textsuperscript{\eqref{tractablearities}}. Then we also have the following: 
\begin{enumerate}[leftmargin=*]
\item There is an idempotent \textnormal{structure--semantics} adjunction between $\J$-pretheories\textsuperscript{\eqref{Jtheory}} and $\J$-tractable $\V$-categories over $\C$ \eqref{strsemadjunction}:
\[\begin{tikzcd}
	{\Preth_{\underJ}(\C)^\op} &&& {} & {\J\Tract(\C)}.
	\arrow[""{name=0, anchor=center, inner sep=0}, "\Sem", shift left=3, from=1-1, to=1-5]
	\arrow[""{name=1, anchor=center, inner sep=0}, "\Str", shift left=3, from=1-5, to=1-1]
	\arrow["\dashv"{anchor=center, rotate=90}, draw=none, from=1, to=0]
\end{tikzcd} \]
\item There is an idempotent adjunction between $\J$-pretheories and $\V$-monads on $\C$ \eqref{strsemcorestricted}:
\[\begin{tikzcd}
	{\Preth_{\underJ}(\C)} &&& {} & {\Mnd(\C)}.
	\arrow[""{name=0, anchor=center, inner sep=0}, "\sfm", shift left=3, from=1-1, to=1-5]
	\arrow[""{name=1, anchor=center, inner sep=0}, "\sft", shift left=3, from=1-5, to=1-1]
	\arrow["\vdash"{anchor=center, rotate=90}, draw=none, from=1, to=0]
\end{tikzcd} \]
\item If $\C$ is complete, then each of $\Preth_{\underJ}(\C)$, $\Th_{\underJ}(\C)$, and $\Mnd_{\underJ}(\C)$ has small algebraic colimits \eqref{cocompletecor}.
\item Any small subcategory of arities contained in $\J$ is strongly amenable \eqref{pushoutthm}.
\end{enumerate}
\end{enumerate}
\end{theo}

\begin{theo}
\label{examplesthm}
We have the following classes of examples of amenable and strongly amenable subcategories of arities:
\begin{enumerate}[leftmargin=*]
\item Let $\V$ be a symmetric monoidal closed category with equalizers. Then every (possibly large) eleutheric\textsuperscript{\eqref{eleutheric}} subcategory of arities $\J \hookrightarrow \C$ in an arbitrary $\V$-category $\C$ is amenable \eqref{eleuthericamenable}.

\item Under the assumptions of \ref{boundedassumptions}, every subcategory of arities $\J \hookrightarrow \C$ that is contained in some bounded\textsuperscript{\eqref{boundedVfunctor}} and eleutheric subcategory of arities is strongly amenable \eqref{boundedeleuthericamenable}. In particular, given a locally bounded closed category\textsuperscript{\eqref{boundedexamples}} $\V,$ every small and eleutheric subcategory of arities $\J \hookrightarrow \C$ in a locally bounded $\V$-category\textsuperscript{\eqref{boundedexamples}} $\C$ is strongly amenable, as is every subcategory of arities contained in $\J$ \textnormal{(\ref{boundedeleuthericamenable}, \ref{boundedeleuthericexamples})}. 

\item Let $\V$ be a locally bounded closed category. Then every small subcategory of arities $\J \hookrightarrow \C$ in a $\V$-sketchable\textsuperscript{\eqref{locallyboundedassumptions}} $\V$-category $\C$ is strongly amenable \eqref{locbdtractable}. For example, this includes every small full sub-$\V$-category of $\V$ that contains the unit object \eqref{locbdVcase}.
\end{enumerate}
\end{theo}

\section{Notation and background}
\label{background}

We use the methods of enriched category theory throughout the paper; one can consult the texts \cite{Kelly, Dubucbook} or \cite[Chapter 6]{Borceux2} for details. We mainly use the notation of Kelly's text \cite{Kelly}. Throughout the paper, we let $\V = (\V_0, \tensor, I)$ be a symmetric monoidal closed category such that $\V_0$ is locally small and has equalizers; we typically just refer to $\V$ as a closed category. Since we do not assume that $\V_0$ is complete, we cannot in general form $\V$-functor $\V$-categories $[\A, \B]$ for $\V$-categories $\A$ and $\B$, even when $\A$ is small. We shall therefore make use of the method of \emph{universe extensions} described in \cite[\S 3.11, 3.12]{Kelly}, whereby $\V$ is embedded (via a symmetric strong monoidal functor) into a symmetric monoidal closed category $\V'$ that is $\scrU$-complete and $\scrU$-cocomplete with respect to some larger universe $\scrU$ of sets for which $\V_0$ is $\scrU$-small, where the given embedding preserves all limits and all $\scrU$-small colimits that exist in $\V$. All of the $\V$-categories that we consider are assumed to be $\scrU$-small. Given a $\V'$-category $\A$, we write $\A_0$ for its underlying ordinary category (which is locally $\scrU$-small). 

\section{The structure--semantics adjunction}
\label{strsemsection}

Throughout \S\ref{strsemsection}, we fix a \textbf{subcategory of arities} $j : \J \rightarrowtail \C$, i.e.~a fully faithful and dense $\V$-functor, in an arbitrary $\V$-category $\C$. For most purposes we can (and shall) assume that $j$ is the inclusion $\J \hookrightarrow \C$ of a full sub-$\V$-category. For example, the full sub-$\V$-category $\C \hookrightarrow \C$ is a (generally large) subcategory of arities. 

We have the \textbf{nerve $\V'$-functor} $N_j : \C \to \left[\J^\op, \V\right]$ defined by $N_j C = \C(j-, C)$ ($C \in \C$), which is fully faithful because $j : \J \hookrightarrow \C$ is dense. A \textbf{$j$-nerve} is then a presheaf $\J^\op \to \V$ in the essential image of $N_j$, and we write $j\Ner(\V) \hookrightarrow \left[\J^\op, \V\right]$ for the full sub-$\V'$-category of the $\V'$-category $\left[\J^\op, \V\right]$ consisting of the $j$-nerves. The fully faithful $N_j : \C \to \left[\J^\op, \V\right]$ thus corestricts to an equivalence $N_j : \C \xrightarrow{\sim} j\Ner(\V)$, so that $j\Ner(\V)$ is a $\V$-category.  

\begin{defn}
\label{Jtheory}
{
A \textbf{$\J$-pretheory}\footnote{Bourke and Garner \cite{BourkeGarner} use the dual notion of pretheory (under additional assumptions on $\J, \C, \V$), while our notion of pretheory accords with the notion of pretheory previously studied (in the unenriched context) by Linton \cite[Page 20]{Lintonoutline} under the name \emph{clone}, and later by Diers \cite{Diers} under the name \emph{theory}.} is a $\V$-category $\scrT$ equipped with an identity-on-objects $\V$-functor $\tau : \J^\op \to \scrT$. A \textbf{$\J$-theory} is a $\J$-pretheory $(\scrT, \tau)$ with the further property that each $\scrT(J, \tau-) : \J^\op \to \V$ ($J \in \ob\scrT = \ob\J$) is a $j$-nerve. A \textbf{morphism of $\J$-pretheories} from a $\J$-pretheory $(\scrT, \tau)$ to a $\J$-pretheory $(\mathscr{U}, \upsilon)$ is a $\V$-functor $H : \scrT \to \mathscr{U}$ satisfying $H \circ \tau = \upsilon$ (from which it follows that $H$ is identity-on-objects). We then have the ordinary category $\Preth_{\underJ}(\C)$ of $\J$-pretheories and their morphisms, as well as the full subcategory $\Th_{\underJ}(\C) \hookrightarrow \Preth_{\underJ}(\C)$ of $\J$-theories. Note that $\Preth_{\underJ}(\C)$ is just the full subcategory of the coslice category $\J^\op/\V\CAT$ consisting of the $\V$-functors that are identity-on-objects.  
}
\end{defn}

\begin{defn}
\label{concretealgebras}
{
Let $\scrT$ be a $\J$-pretheory. A \textbf{concrete $\scrT$-algebra}\footnote{This definition was also employed by Bourke and Garner \cite{BourkeGarner} (in their locally presentable setting), and goes back to Linton \cite{Lintonoutline} and Diers \cite{Diers}.} is an object $A$ of $\C$ equipped with a $\V$-functor $M : \scrT \to \V$ that extends the $j$-nerve $\C(j-, A) : \J^\op \to \V$ along $\tau : \J^\op \to \scrT$, i.e.~that satisfies $M \circ \tau = \C(j-, A)$. We shall denote a concrete $\scrT$-algebra by $(A, M)$ or simply by $A$. The $\V'$-category $\scrT\Alg^!$ of concrete $\scrT$-algebras is then defined by the following pullback diagram in $\V'\CAT$:
\begin{equation}\label{eqn:concrete_pb}
\begin{tikzcd}
	{\scrT\Alg^!} && {[\scrT, \V]} \\
	\\
	\C && {\left[\J^\op, \V\right]}.
	\arrow["{\sfM^\scrT}", from=1-1, to=1-3]
	\arrow["{[\tau, 1]}", from=1-3, to=3-3]
	\arrow["{U^\scrT}"', from=1-1, to=3-1]
	\arrow["{N_j}"', from=3-1, to=3-3]
\end{tikzcd}
\end{equation}
Note that $\sfM^\scrT : \scrT\Alg^! \to [\scrT, \V]$ is fully faithful, as a pullback of the fully faithful $N_j$. 
}
\end{defn} 

\noindent Although in general $\scrT\Alg^!$ is only a $\V'$-category, we shall principally be interested in cases where it is a $\V$-category; see Definition \ref{tractablearities} below.

\begin{defn}
\label{nonconcretealgebras}
{
Let $\scrT$ be a $\J$-pretheory. The $\V'$-category $\scrT\Alg$ of \textbf{(non-concrete) $\scrT$-algebras}\footnote{This definition was also employed by Bourke and Garner \cite{BourkeGarner} in their locally presentable setting.} is defined by the following pullback diagram in $\V'\CAT$:
\[\begin{tikzcd}
	{\scrT\Alg} && {[\scrT, \V]} \\
	\\
	j\Ner(\V) && {\left[\J^\op, \V\right]}.
	\arrow[hook, from=1-1, to=1-3]
	\arrow["{[\tau, 1]}", from=1-3, to=3-3]
	\arrow["{W^\scrT}"', from=1-1, to=3-1]
	\arrow[hook, from=3-1, to=3-3]
\end{tikzcd}\]
Thus, a (non-concrete) $\scrT$-algebra is a $\V$-functor $M : \scrT \to \V$ whose restriction along $\tau$ is a $j$-nerve. 
}
\end{defn}

\begin{para}
\label{system_arities_para}
{
In the setting where $\C = \V$, a \textbf{system of arities (in $\V$)} is a fully faithful strong symmetric monoidal $\V$-functor $j : \J \rightarrowtail \V$ (see \cite[Definition 3.1]{EAT}), so that $\J$ is in particular a symmetric monoidal $\V$-category. Every system of arities $j : \J \rightarrowtail \V$ is a dense $\V$-functor by \cite[Proposition 3.10]{EAT}, and thus is a subcategory of arities. In view of \cite[Proposition 3.8]{EAT} every system of arities is equivalent (in a suitable sense) to a full sub-$\V$-category $\J \hookrightarrow \V$ that contains the unit object $I$ and is closed under the monoidal product $\tensor$. Given a system of arities $\J \hookrightarrow \V$, the $\V$-category $\J$ has tensors by objects of $\J$, which are formed as in $\V$. It readily follows that a $\V$-functor $F : \J^\op \to \V$ is a $j$-nerve iff it preserves $\J$-cotensors (i.e.~cotensors by objects of $\J$). This observation then immediately entails the following result, which shows for a system of arities $j : \J \rightarrowtail \V$ that a $\J$-pretheory $\scrT$ is a $\J$-theory in the sense of Definition \ref{Jtheory} iff it is a $\J$-theory in the sense of \cite[Definition 4.1]{EAT}, and that a $\V$-functor $M : \scrT \to \V$ is a non-concrete algebra for a $\J$-theory $\scrT$ in the sense of Definition \ref{nonconcretealgebras} iff it is a $\scrT$-algebra in the sense of \cite[Definition 5.1]{EAT}.

\begin{prop}
\label{systemaritiesthm}
Let $j : \J \rightarrowtail \V$ be a system of arities, and let $\scrT$ be a $\J$-pretheory.
\begin{enumerate}[leftmargin=*]
\item A $\V$-functor $M : \scrT \to \V$ is a (non-concrete) $\scrT$-algebra iff $M \circ \tau : \J^\op \to \V$ preserves $\J$-cotensors. 
\item $\scrT$ is a $\J$-theory iff $\tau : \J^\op \to \scrT$ preserves $\J$-cotensors.
\item Suppose that $\scrT$ is a $\J$-theory. Then a $\V$-functor $M : \scrT \to \V$ is a (non-concrete) $\scrT$-algebra iff $M$ preserves $\J$-cotensors. \qed 
\end{enumerate}
\end{prop}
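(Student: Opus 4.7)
The proof will rest entirely on the observation stated just before the proposition: for a system of arities $j : \J \hookrightarrow \V$, a $\V$-functor $F : \J^\op \to \V$ is a $j$-nerve if and only if it preserves $\J$-cotensors (which exist in $\J^\op$ because $\J$ is closed under $\otimes$ in $\V$). With this in hand, each of the three statements reduces to an unpacking of the relevant definitions together with a standard fact about representables and weighted limits.

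For (1), my plan is to apply Definition \ref{nonconcretealgebras} directly: $M : \scrT \to \V$ is a non-concrete $\scrT$-algebra precisely when $M \circ \tau : \J^\op \to \V$ lies in $j\Ner(\V)$, i.e.~is a $j$-nerve. The key observation then immediately identifies this condition with preservation of $\J$-cotensors by $M \circ \tau$.

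For (2), I would unpack Definition \ref{Jtheory}, which says that $\scrT$ is a $\J$-theory iff each $\V$-functor $\scrT(J, \tau-) : \J^\op \to \V$ is a $j$-nerve. By the key observation, this is the condition that each $\scrT(J, \tau-)$ preserves $\J$-cotensors. I would then invoke the standard enriched fact that a $\V$-functor $H : \A \to \B$ preserves a given weighted limit if and only if each representable $\B(B, H-) : \A \to \V$ does (see \cite{Kelly}), applied to $H = \tau : \J^\op \to \scrT$ and the class of $\J$-cotensors in $\J^\op$; this yields the equivalence with $\tau$ preserving $\J$-cotensors.

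For (3), assume $\scrT$ is a $\J$-theory. By (2), $\tau : \J^\op \to \scrT$ preserves $\J$-cotensors; since $\tau$ is identity-on-objects and $\J^\op$ has all $\J$-cotensors, every object of $\scrT$ has $\J$-cotensors in $\scrT$, computed as the $\tau$-image of the corresponding $\J$-cotensors in $\J^\op$. Consequently, $M : \scrT \to \V$ preserves $\J$-cotensors if and only if $M \circ \tau$ preserves $\J$-cotensors (in $\J^\op$), which by (1) is equivalent to $M$ being a $\scrT$-algebra. None of this presents a genuine obstacle; the only step requiring a small care is the appeal to the joint reflection of limits by representables in (2), but this is routine in enriched category theory, and the rest is diagram-chasing the pullback in Definition \ref{nonconcretealgebras}.
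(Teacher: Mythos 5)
Your proposal is correct and follows exactly the route the paper intends: the paper proves this proposition solely by the observation in \ref{system_arities_para} that a presheaf $\J^\op \to \V$ is a $j$-nerve iff it preserves $\J$-cotensors, and then declares the three statements immediate. Your additional details (the joint detection of weighted limits by representables in (2), and the transport of $\J$-cotensors along the identity-on-objects $\tau$ in (3)) are precisely the routine unpacking the paper leaves implicit.
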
 

} 
\end{para}

\begin{para}
\label{T_alg_repr_para}
{
Let $\scrT$ be a $\J$-pretheory. Given an object $(\A, G)$ of the slice category $\V'\CAT/\C$, it readily follows from the definition \eqref{concretealgebras} of $\scrT\Alg^!$ that morphisms $P : (\A, G) \to \left(\scrT\Alg^!, U^\scrT\right)$ in $\V'\CAT/\C$ correspond naturally and bijectively to $\V'$-functors $P_1 : \A \to [\scrT, \V]$ that make the following square commute:
\[\begin{tikzcd}
	\A && {[\scrT, \V]} \\
	\\
	\C && {\left[\J^\op, \V\right]}.
	\arrow["{P_1}", from=1-1, to=1-3]
	\arrow["{[\tau, 1]}", from=1-3, to=3-3]
	\arrow["G"', from=1-1, to=3-1]
	\arrow["{N_j}"', from=3-1, to=3-3]
\end{tikzcd}\]
By transposition, morphisms $P : (\A, G) \to \left(\scrT\Alg^!, U^\scrT\right)$ in $\V'\CAT/\C$ therefore correspond to $\V'$-functors $P_2 : \scrT \to [\A, \V]$ for which $P_2 \circ \tau = \C(j-, G?) : \J^\op \to [\A, \V]$.  
}
\end{para}

\begin{para}
\label{discrete_iso}
{
A \emph{discrete isofibration}  is a functor $G : \X \to \Y$ with the property that for any isomorphism $f : Y \xrightarrow{\sim} GX$ in $\Y$, there is a unique isomorphism $f' : X' \xrightarrow{\sim} X$ in $\X$ with $G(f') = f$. A faithful functor is a discrete isofibration iff it is \emph{uniquely transportable} in the sense of \cite[Definition 5.28]{AHS}. For example, every strictly monadic functor is a discrete isofibration (see e.g.~\cite[Proposition 20.12]{AHS}). We shall say that an enriched functor (i.e.~a $\V$- or $\V'$-functor) is a discrete isofibration if its underlying ordinary functor is a discrete isofibration. Given an identity-on-objects $\V$-functor $\tau : \A \to \B$ and a $\V'$-category $\C$, it is well known (and not difficult to prove) that the $\V'$-functor $[\tau, 1] : [\B, \C] \to [\A, \C]$ given by precomposition with $\tau$ is a discrete isofibration. In particular, if $\scrT$ is a $\J$-pretheory, then $\left[\tau, 1\right] : [\scrT, \V] \to \left[\J^\op, \V\right]$ is a discrete isofibration. Discrete isofibrations are also stable under pullback (see, e.g.,~the proof of \cite[Lemma 14]{BourkeGarner}), so that $U^\scrT : \scrT\Alg^! \to \C$ is a discrete isofibration for each $\J$-pretheory $\scrT$.

By \cite[Proposition 5.36]{AHS}, every faithful functor factors as an equivalence followed by a discrete isofibration. By taking a pseudo-inverse, it follows that for every faithful functor $G:\X \to \Y$ there is an equivalence $L:\X' \xrightarrow{\sim} \X$ such that $GL:\X' \to \Y$ is isomorphic to a discrete isofibration. We now extend this to the enriched setting by using the following basic transport-of-structure principles for $\V$-categories. Firstly, the 2-functor $(-)_0:\V\CAT \to \mathsf{CAT} = \Set\CAT$ is \textit{locally a discrete isofibration} in the sense that for all $\V$-categories $\X$ and $\Y$ the functor $(-)_0:\V\CAT(\X,\Y) \to \mathsf{CAT}(\X_0,\Y_0)$ is a discrete isofibration. Secondly, the functor $\ob:\V\CAT_0 \to \SET$ is a cloven fibration, where $\V\CAT_0$ is the underlying ordinary category of $\V\CAT$ and $\SET$ is the category of large sets: Explicitly, the $\ob$-cartesian morphisms are the fully faithful $\V$-functors, and for each $\V$-category $\X$ and each morphism $f:S \to \ob\X$ in $\SET$ we obtain an $\ob$-cartesian morphism $f^\X:f^*(\X) \to \X$ in $\V\CAT_0$ with $\ob f^\X = f$ as follows: The $\V$-category $f^*(\X)$ has $\ob f^*(\X) = S$ and $f^*(\X)(s,t) = \X(f(s),f(t))$ for all $s,t \in S$, with composition and identities as in $\X$, and the $\V$-functor $f^\X$ is given on objects by $f$ and on homs by identities. In particular, every $\V$-functor $F:\B \to \X$ factors as an identity-on-objects $\V$-functor $E:\B \to (\ob F)^*(\X)$ followed by a fully faithful $\V$-functor $M = (\ob F)^\X:(\ob F)^*(\X) \to \X$.
}
\end{para}

\begin{prop}
\label{discrete_iso_prop}
Let $G : \X \to \Y$ be a $\V$-functor whose underlying ordinary functor is faithful. Then there is a $\V$-category $\X'$ and an equivalence of $\V$-categories $L : \X' \xrightarrow{\sim} \X$ such that $G  L : \X' \to \Y$ is isomorphic to a discrete isofibration $G':\X' \to \Y$.
\end{prop}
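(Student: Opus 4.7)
My plan is to construct $\X'$ via an iso-comma--type $\V$-category construction. Define $\X'$ to have as objects the triples $(X, Y, \phi)$ with $X \in \ob\X$, $Y \in \ob\Y$, and $\phi : GX \xrightarrow{\sim} Y$ an isomorphism in $\Y$; the hom-$\V$-objects $\X'\bigl((X_1, Y_1, \phi_1), (X_2, Y_2, \phi_2)\bigr)$ are defined as the pullback in $\V'$ of $\X(X_1, X_2) \xrightarrow{\phi_2 \circ G(-)} \Y(GX_1, Y_2) \xleftarrow{- \circ \phi_1} \Y(Y_1, Y_2)$, thus encoding compatible pairs $(f, g)$ with $g \phi_1 = \phi_2 G(f)$. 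Define $L : \X' \to \X$ as the first projection $(X, Y, \phi) \mapsto X$, acting on hom-$\V$-objects by the corresponding pullback projection.

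First I would verify that $L$ is a $\V$-equivalence. Essential surjectivity is witnessed by the strict section $X \mapsto (X, GX, \id_{GX})$, since $L(X, GX, \id_{GX}) = X$. Fully faithfulness follows from the observation that, since each $\phi_i$ is invertible, the map $- \circ \phi_1 : \Y(Y_1, Y_2) \to \Y(GX_1, Y_2)$ is an isomorphism in $\V'$, so the defining pullback projects isomorphically onto $\X(X_1, X_2)$; hence $L$ induces an isomorphism on hom-$\V$-objects.

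The main obstacle is verifying that $GL : \X' \to \Y$ is a discrete isofibration. Given $(X, Y, \phi) \in \X'$ and an isomorphism $\psi : Z \xrightarrow{\sim} GL(X, Y, \phi) = GX$ in $\Y$, I would construct the unique iso-lift in $\X'$ using the universal property of the pullback defining the hom-$\V$-objects, together with the structural iso $\phi$, to rigidify the data $(X'', Y'', \phi'')$ and the compatibility-determined morphism $(f, g)$. The delicate point is strict uniqueness (not just existence, which the iso-comma always provides): here I would exploit, as noted in \ref{discrete_iso}, that the identity-on-objects restriction functors $[\tau, 1]$ are themselves discrete isofibrations and that discrete isofibrations are stable under pullback, so that the enriched pullback defining $\X'$ transfers the discrete iso-lifting property from $\Y$ through the compatibility square onto $GL$.
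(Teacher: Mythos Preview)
Your construction does not make $GL$ a discrete isofibration. Since $L$ is the first projection $(X,Y,\phi) \mapsto X$, the composite $GL$ sends $(X,Y,\phi)$ to $GX$ and $(f,g)$ to $Gf$. Hence an isomorphism $\psi : Z \to GL(X,Y,\phi) = GX$ in $\Y$ lifts along $GL$ exactly when it lifts along $G$ itself: any lift $(f,g) : (X',Y',\phi') \to (X,Y,\phi)$ must have $Gf = \psi$, which forces $GX' = Z$ on the nose. If $Z$ is not literally in the image of $G$ on objects, no lift exists, so $GL$ is not even an isofibration --- contrary to your parenthetical remark that existence is something ``the iso-comma always provides''. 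The iso-comma construction does produce an isofibration, but via the \emph{second} projection $(X,Y,\phi) \mapsto Y$, which is only naturally isomorphic to $GL$, not equal to it; and even that second projection is not a \emph{discrete} isofibration, since given $\psi : Z \to Y$ one obtains a distinct lift for every isomorphism $h : X'' \to X$ in $\X$ by taking $f = h$ and setting $\phi'' = \psi^{-1}\phi\,Gh$. Your appeal to the pullback-stability of discrete isofibrations is misplaced: neither $GL$ nor the hom-object pullbacks defining $\X'$ arise as pullbacks of any discrete isofibration in sight.

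The paper proceeds quite differently. It first invokes the construction underlying \cite[Proposition 5.36]{AHS} to obtain, at the level of underlying ordinary categories, an equivalence $K : \A \to \X_0$ with $G_0 K$ a discrete isofibration, and then transports the $\V$-enrichment of $\X$ along $K$ (via an identity-on-objects/fully-faithful factorization of $K$) to produce the $\V$-category $\X'$ and the $\V$-equivalence $L$ with $L_0$ equal to the fully faithful factor. The point is that the ordinary-category result is used as a black box and the enrichment is added afterwards, rather than attempting a direct enriched iso-comma construction.
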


\begin{proof}
By \ref{discrete_iso}, there is an equivalence of (ordinary) categories $K:\A \xrightarrow{\sim} \X_0$ and a discrete isofibration $H:\A \to \Y_0$ with $G_0 K \cong H$. Factoring $K$ as an identity-on-objects functor $E:\A \to (\ob K)^*(\X_0)$ followed by a fully faithful functor $M = (\ob K)^{\X_0}:(\ob K)^*(\X_0) \to \X_0$ with the notation of \ref{discrete_iso}, we find that $E$ is an isomorphism and $M$ is an equivalence. But the functor $(-)_0:\V\CAT_0 \to \mathsf{CAT}_0$ commutes with the cloven fibrations $\ob:\V\CAT_0 \to \SET$ and $\ob:\mathsf{CAT}_0 \to \SET$ and preserves the cleavages, so $(\ob K)^*(\X_0)$ underlies a $\V$-category $\X' := (\ob K)^*(\X)$, and $M$ underlies a fully faithful $\V$-functor $L := (\ob K)^\X:\X' \to \X$. But $L$ is also essentially surjective on objects since $M$ is so, and hence $L$ is an equivalence. On the other hand $H \cong G_0 K = G_0 ME = G_0 L_0E:\A \to \Y_0$ and $E:\A \to \X'_0$ is an isomorphism, so the $\V$-functor $GL:\X' \to \Y$ has $(GL)_0 = G_0L_0 \cong HE^{-1}:\X'_0 \to \Y_0$, noting that $HE^{-1}$ is a discrete isofibration since $H$ is so. But $(-)_0$ is locally a discrete isofibration (\ref{discrete_iso}), so there is a $\V$-functor $G':\X' \to \Y$ with $G'_0 = HE^{-1}$ and $G' \cong GL$.
\end{proof}

\begin{para}
\label{concreteequivalence}
{
Let $\scrT$ be a $\J$-pretheory. In view of Definitions \ref{concretealgebras} and \ref{nonconcretealgebras}, we obtain a comparison $\V'$-functor $\scrT\Alg^! \to \scrT\Alg$ as in the following commutative diagram in $\V'\CAT$, whose left square is thus (also) a pullback:
\[\begin{tikzcd}
	{\scrT\Alg^!} && \scrT\Alg && {[\scrT, \V]} \\
	\\
	\C && {j\Ner(\V)} && {\left[\J^\op, \V\right]}.
	\arrow[hook, from=1-3, to=1-5]
	\arrow["{W^\scrT}", from=1-3, to=3-3]
	\arrow["{[\tau, 1]}", from=1-5, to=3-5]
	\arrow[hook, from=3-3, to=3-5]
	\arrow[dashed, from=1-1, to=1-3]
	\arrow["{U^\scrT}", from=1-1, to=3-1]
	\arrow["{N_j}", from=3-1, to=3-3]
	\arrow["{\sfM^\scrT}", curve={height=-18pt}, from=1-1, to=1-5]
\end{tikzcd}\]
We now show that this comparison $\V'$-functor is an equivalence\footnote{Bourke and Garner established an analogous result \cite[Proposition 25]{BourkeGarner} in their locally presentable setting.}. It is fully faithful, as a pullback of the fully faithful $N_j : \C \xrightarrow{\sim} j\Ner(\V)$. It is also essentially surjective, because if $M : \scrT \to \V$ is a non-concrete $\scrT$-algebra, then $M \circ \tau : \J^\op \to \V$ is a $j$-nerve, and so there is some $A \in \ob\C$ with $M \circ \tau \cong \C(j-, A)$. Since $[\tau, 1]$ is a discrete isofibration \eqref{discrete_iso}, it then follows that $M$ is isomorphic to the image of a concrete $\scrT$-algebra under the comparison $\V'$-functor. So $\scrT\Alg^! \simeq \scrT\Alg$ in $\V'\CAT/\C$ when we equip $\scrT\Alg$ with the composite $\V'$-functor $\scrT\Alg \xrightarrow{W^\scrT} j\Ner(\V) \xrightarrow{\sim} \C$.
}
\end{para} 

\begin{para}
\label{J_rel_adj}
Let $G : \A \to \C$ and $F:\J \to \A$ be $\V$-functors. Applying Diers' notion of relative adjunction \cite{Diers} in the present setting of a $\V$-enriched subcategory of arities $j:\J \hookrightarrow \C$, we say that $F$ is a \textbf{$j$-relative left adjoint} for $G$ if $F$ is equipped with isomorphisms $\A(FJ, A) \cong \C(jJ, GA)$ $\V$-natural in $J \in \J, A \in \A$. Equivalently, a $j$-relative left adjoint for $G$ is a $\V$-functor $F : \J \to \A$ equipped with a $\V$-natural transformation $\eta : j \Rightarrow GF$ such that
\[ \A(FJ, A) \xrightarrow{G_{FJ, A}} \C(GFJ, GA) \xrightarrow{\C\left(\eta_J, 1\right)} \C(jJ, GA) \] is an isomorphism for all $J \in \ob\J$ and $A \in \ob\A$. In this situation, we say that \emph{$\eta : j \Rightarrow GF$ exhibits $F$ as a $j$-relative left adjoint for $G$}, or that $\eta$ is the \textbf{unit} of the $j$-relative adjunction.  We also say that $G$ is a \textbf{$j$-relative right adjoint} for $F$.

In particular, if $G$ has a left adjoint $F : \C \to \A$ with unit $\eta : 1 \Rightarrow GF$, then $\eta j : j \Rightarrow GFj$ exhibits $Fj : \J \to \A$ as a $j$-relative left adjoint for $G$, and $1:Fj \Rightarrow Fj$ exhibits $F$ as a left Kan extension of $Fj$ along $j$ (since $F$ preserves colimits and $1_\C = \Lan_j j$ by the density of $j$). Conversely, if $\eta : j \Rightarrow GF$ exhibits $F : \J \to \A$ as a $j$-relative left adjoint for $G$ and the left Kan extension $\Lan_j F : \C \to \A$ exists, with unit $\varphi : F \overset{\sim}{\Longrightarrow} \left(\Lan_j F\right)j$, then there is a unique $\V$-natural transformation $\overline{\eta} : 1 \Rightarrow G\Lan_j F$ that makes the triangle
\begin{equation}\label{eq:rel_adj_unit}
\begin{tikzcd}
	j \\
	GF && {G\left(\Lan_j F\right)j}
	\arrow["\eta"', Rightarrow, from=1-1, to=2-1]
	\arrow[""{name=0, anchor=center, inner sep=0}, "{\overline{\eta}j}", Rightarrow, from=1-1, to=2-3]
	\arrow["G\varphi"', Rightarrow, from=2-1, to=2-3]
	\arrow["{=}"{description}, draw=none, from=2-1, to=0]
\end{tikzcd}
\end{equation} 
commute, and moreover $\overline{\eta}$ exhibits $\Lan_jF$ as left adjoint to $G$. The $\V$-natural transformation $\overline{\eta}$ is obtained using the fact that $1 : j \Rightarrow  j$ presents $1_\C$ as $\Lan_j j$ (because $j$ is dense).   
\end{para}

\begin{para}
\label{Jtheoryfunctor}
Let $\scrT$ be a $\J$-pretheory, so that $\scrT$ is equipped with an identity-on-objects $\V$-functor $\tau:\J^\op \to \scrT$.  Note that 
$$
\begin{minipage}{5in}\textit{$\scrT$ is a $\J$-theory iff $\tau^\op:\J \to \scrT^\op$ is a $j$-relative left adjoint.}
\end{minipage}
$$
Now assuming that $\scrT$ is a $\J$-theory, we write 
$$S_\scrT \;:\; \scrT^\op \longrightarrow \C$$
to denote the $j$-relative right adjoint for $\tau^\op$.  Explicitly, this $\V$-functor $S_\scrT$ is characterized uniquely up to isomorphism by the statement that $\scrT(J, \tau-) \cong \C(j-, S_\scrT J)$, $\V$-naturally in $J \in \scrT^\op$. We also define a $\V$-functor
$$T_\scrT : \J \longrightarrow \C$$
as the composite $\J \xrightarrow{\tau^\op} \scrT^\op \xrightarrow{S_\scrT} \C$, which is, up to isomorphism, the unique $\V$-functor such that $\scrT(\tau J, \tau-) \cong \C(j-, T_\scrT J)$, $\V$-naturally in $J \in \J$. We write
$$u^{\scrT} \;:\; j \Longrightarrow S_{\scrT}\circ\tau^\op = T_\scrT$$
to denote the unit of the $j$-relative adjunction, and we call $u^{\scrT}$ the \textbf{unit of the \mbox{$\J$-theory} $\scrT$}.  Explicitly, the component $u^{\scrT}_J:J \to T_\scrT J$ at each $J \in \ob\J$ is the image of the identity morphism $1_J : J \to J$ under the bijection $\scrT_0(J, J) = \scrT_0(J, \tau J) \xrightarrow{\sim} \C_0(J, T_\scrT J)$. 

Writing $\y : \scrT^\op \to [\scrT, \V]$ for the Yoneda embedding, since $N_j \circ S_\scrT \cong [\tau, 1] \circ \y : \scrT^\op \to \left[\J^\op, \V\right]$ and $[\tau, 1]$ is a discrete isofibration \eqref{discrete_iso}, we can replace $\y : \scrT^\op \to [\scrT, \V]$ by an isomorphic $\V'$-functor $\y' : \scrT^\op \to [\scrT, \V]$ that satisfies $N_j \circ S_\scrT = [\tau, 1] \circ \y'$. We then obtain a fully faithful $\V'$-functor $\mathsf{Y}_\scrT : \scrT^\op \to \scrT\Alg^!$ by applying the universal property of $\scrT\Alg^!$ as a (strict) pullback:
\[\begin{tikzcd}
	{\scrT^\op} \\
	& {\scrT\Alg^!} && {[\scrT, \V]} \\
	\\
	& \C && {\left[\J^\op, \V\right]}.
	\arrow["{\sfM^\scrT}", from=2-2, to=2-4]
	\arrow["{[\tau, 1]}", from=2-4, to=4-4]
	\arrow["{U^\scrT}"', from=2-2, to=4-2]
	\arrow["{N_j}"', from=4-2, to=4-4]
	\arrow["{S_\scrT}"', curve={height=12pt}, from=1-1, to=4-2]
	\arrow["\y'", curve={height=-12pt}, from=1-1, to=2-4]
	\arrow["{\mathsf{Y}_\scrT}"{description}, dotted, from=1-1, to=2-2]
\end{tikzcd}\]
In other words, $\mathsf{Y}_\scrT : \scrT^\op \to \scrT\Alg^!$ is the morphism of $\V\CAT/\C$ corresponding to $\y' : \scrT^\op \to [\scrT, \V]$ via \ref{T_alg_repr_para}. From \cite[Proposition 5.16]{Kelly} we then have $N_{\mathsf{Y}_\scrT} \cong \sfM^\scrT : \scrT\Alg^! \to [\scrT, \V]$ (so that $\mathsf{Y}_\scrT$ is also dense, because $\sfM^\scrT$ is fully faithful). We write $$\phi_\scrT \;:\; \J \longrightarrow \scrT\Alg^!$$
for the composite $\V'$-functor $\J \xrightarrow{\tau^\op} \scrT^\op \xrightarrow{\mathsf{Y}_\scrT} \scrT\Alg^!$.  Note that $U^\scrT \circ \phi_\scrT = T_\scrT:\J \to \C$.
\end{para}

\noindent We now give one of the central definitions of the paper:

\begin{defn}
\label{tractablearities}
{
A $\J$-pretheory $\scrT$ is \textbf{admissible} if the $\V'$-category $\scrT\Alg^!$ of concrete $\scrT$-algebras is a $\V$-category\footnote{We shall see in Remark \ref{monadic_rmk} that this first requirement is redundant.} and the $\V$-functor $U^\scrT : \scrT\Alg^! \to \C$ has a left adjoint. The subcategory of arities $\J \hookrightarrow \C$ is \textbf{amenable} (resp.~\textbf{strongly amenable}) if every $\J$-theory (resp.~$\J$-pretheory) is admissible. We write $\Preth_{\underJ}^\sfa(\C)$ for the full subcategory of $\Preth_{\underJ}(\C)$ consisting of the admissible $\J$-pretheories.
}
\end{defn}

\begin{assumption}
\label{amenable_assn}
We shall assume for the remainder of \S\ref{strsemsection} that the subcategory of arities $\J \hookrightarrow \C$ is amenable.
\end{assumption}

\noindent A \textbf{$\V$-category over $\C$} is an object of the (strict) slice category $\V\CAT/\C$, i.e.~a $\V$-category $\A$ equipped with a $\V$-functor $G : \A \to \C$. We denote such a $\V$-category over $\C$ by $(\A, G)$ or even just $\A$. 

\begin{defn}
\label{Jtractable}
{
A $\V$-functor $G : \A \to \C$ with codomain $\C$ is \textbf{$\J$-tractable} if $\C$ admits the weighted limit $\{\C(J, G-), G\}$ for each $J \in \ob\J$. A $\V$-category $(\A, G)$ over $\C$ is \textbf{$\J$-tractable} if the associated $\V$-functor $G : \A \to \C$ is $\J$-tractable. We write $\J\Tract(\C)$ for the full subcategory of $\V\CAT/\C$ consisting of the $\J$-tractable $\V$-categories over $\C$. 
}
\end{defn}

\noindent We show in \ref{structure_facts_prop} that a $\V$-functor $G:\A \to \C$ is $\J$-tractable iff $G$ has a \textit{structure $\J$-theory} in the sense of \ref{new_structure}. The following fact is an immediate consequence of the relevant definitions: 

\begin{prop}
\label{Jtractablelem}
Let $G : \A \to \C$ be a $\J$-tractable $\V$-functor. Then the object of $\V$-natural transformations $[\A, \V]\left(\C(J, G-), \C(K, G-)\right)$ exists in $\V$ for all $J, K \in \ob\J$. \qed    
\end{prop}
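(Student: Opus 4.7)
The plan is to extract the claim directly from the universal property of the weighted limit $\{\C(J, G-), G\}$ whose existence is guaranteed by $\J$-tractability. Since we are working with the universe extension $\V'$, the hom-object $[\A, \V](\C(J, G-), \C(K, G-))$ of $\V$-natural transformations a priori exists only in $\V'$; what must be shown is that this $\V'$-object in fact lies in (the image of) $\V$.

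First, I would recall that for any $\V$-functor $G : \A \to \C$ and any object $J \in \ob\J$, the defining universal property of the weighted limit $\{\C(jJ, G-), G\}$ (viewed in $\V'$) is the existence of a $\V'$-natural isomorphism
\[
\C(C, \{\C(jJ, G-), G\}) \;\cong\; [\A, \V]\bigl(\C(jJ, G-), \C(C, G-)\bigr)
\]
in the variable $C \in \ob\C$. Since $G$ is $\J$-tractable (Definition \ref{Jtractable}), this weighted limit exists as an object of $\C$, so the left-hand side is a genuine hom-object of the $\V$-category $\C$ and hence lies in $\V$.

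The key step is then simply to specialize $C := jK$ for any $K \in \ob\J$, yielding a $\V'$-isomorphism
\[
\C\bigl(jK, \{\C(jJ, G-), G\}\bigr) \;\cong\; [\A, \V]\bigl(\C(jJ, G-), \C(jK, G-)\bigr).
\]
The left side is an object of $\V$, so the right side, being isomorphic to it in $\V'$, also exists in $\V$. This is exactly the content of the proposition (with the standing abuse $\C(J, -) = \C(jJ, -)$ for $J \in \ob\J$).

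There is no real obstacle: the result is an immediate consequence of unpacking Definition \ref{Jtractable} and the standard representability characterization of weighted limits; the only subtlety worth flagging in the write-up is the distinction between $\V$ and $\V'$, which is precisely what the proposition is asserting.
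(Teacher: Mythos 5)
Your argument is correct and is exactly the intended one: the paper gives no proof, asserting the proposition as an immediate consequence of the definitions, and the immediate argument is precisely yours — specialize the representing isomorphism $\C(C,\{\C(J,G-),G\})\cong[\A,\V]\bigl(\C(J,G-),\C(C,G-)\bigr)$ at $C=jK$ and note that the left-hand side is a hom-object of the $\V$-category $\C$, hence lies in $\V$. Your flagging of the $\V$ versus $\V'$ distinction is also the right point to emphasize, since that is the entire content of the statement.
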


\begin{rmk}
\label{Dubuctractable}
{
In \cite[\S 2]{Dubucsemantics}, Dubuc defines a $\V$-functor $G : \A \to \C$ to be \emph{strongly tractable} if the (pointwise) right Kan extension $\Ran_GG$ of $G$ along itself exists, which (by the dual of \cite[4.18]{Kelly}) is equivalent to $\C$ admitting the weighted limit $\{\C(C, G-), G\}$ for each $C \in \ob\C$. So $G$ is strongly tractable in Dubuc's sense iff $G$ is $\C$-tractable (for the subcategory of arities $\C \hookrightarrow \C$) in the sense of Definition \ref{Jtractable}.
}
\end{rmk}

\begin{lem}
\label{adjointtractable}
Let $G : \A \to \C$ be a $\V$-functor with a $j$-relative left adjoint $F : \J \to \A$. Then $G : \A \to \C$ is $\J$-tractable. In particular, if $G : \A \to \C$ has a left adjoint, then $G$ is $\J$-tractable.  
\end{lem}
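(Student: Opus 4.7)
The plan is to exploit the representability of the weight induced by the $j$-relative adjunction. Fix $J \in \ob\J$. By definition of the $j$-relative adjunction \ref{J_rel_adj}, we have $\V$-natural isomorphisms $\A(FJ, A) \cong \C(jJ, GA) = \C(J, GA)$ in $A \in \A$. This means the weight $\C(J, G-) : \A \to \V$ for the putative limit is isomorphic to the representable $\V$-functor $\A(FJ, -) : \A \to \V$.

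Next I would invoke the standard fact that a weighted limit with a representable weight exists and is computed by evaluation: for any $\V$-functor $D : \A \to \C$ and any $A_0 \in \ob\A$, the limit $\{\A(A_0, -), D\}$ exists in $\C$ and is canonically isomorphic to $DA_0$, as one verifies directly from the defining universal property $\C(C, \{W, D\}) \cong [\A, \V]\bigl(W, \C(C, D-)\bigr)$ together with the Yoneda lemma. Applying this with $A_0 := FJ$ and $D := G$, and transporting along the isomorphism $\A(FJ, -) \cong \C(J, G-)$, yields the weighted limit
\[ \{\C(J, G-), G\} \;\cong\; G(FJ), \]
which exists in $\C$. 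Since this holds for each $J \in \ob\J$, the $\V$-functor $G$ is $\J$-tractable in the sense of Definition \ref{Jtractable}.

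For the final sentence, suppose $G$ has an ordinary left adjoint $F : \C \to \A$ with unit $\eta : 1 \Rightarrow GF$. As recalled at the end of \ref{J_rel_adj}, the $\V$-natural transformation $\eta j : j \Rightarrow GFj$ exhibits $Fj : \J \to \A$ as a $j$-relative left adjoint for $G$. Applying the first part of the lemma to this $j$-relative left adjoint then gives that $G$ is $\J$-tractable.

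There is essentially no obstacle here; the only subtlety is to be careful that the isomorphism of weights $\C(J, G-) \cong \A(FJ, -)$ is a genuine $\V$-natural isomorphism (so that transport of weighted limits along it is justified), which is immediate from the definition of $j$-relative adjunction since the isomorphism in \ref{J_rel_adj} is required to be $\V$-natural in $A \in \A$.
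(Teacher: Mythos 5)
Your proof is correct and follows essentially the same route as the paper's: both identify the weight $\C(J,G-)$ with the representable $\A(FJ,-)$ via the $j$-relative adjunction and then use the fact that representably weighted limits exist and are given by evaluation (Kelly's (3.10)), yielding $\{\C(J,G-),G\} \cong GFJ$. The deduction of the final sentence from \ref{J_rel_adj} also matches the paper's intent.
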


\begin{proof}
For each $J \in \ob\J$ we have $\C(J, G-) \cong \A(FJ, -) : \A \to \V$, and hence $\C$ admits the weighted limit $\{\C(J, G-), G\}$ because $\{\C(J, G-), G\} \cong \{\A(FJ, -), G\} \cong GFJ$ by \cite[(3.10)]{Kelly}.   
\end{proof}

\begin{rmk}
\label{phi_rmk}
{
Let $\scrT$ be a $\J$-theory. Recall from \ref{Jtheoryfunctor} that we have the $\V$-functor $\phi_\scrT = \mathsf{Y}_\scrT \circ \tau^\op : \J \to \scrT\Alg^!$, which we now show is a $j$-relative left adjoint for the $\V$-functor $U^\scrT : \scrT\Alg^! \to \C$. Indeed, because $\sfM^\scrT : \scrT\Alg^! \to [\scrT, \V]$ is fully faithful and $\sfM^\scrT \phi_\scrT J \cong \scrT(\tau J, -)$ ($J \in \J$), we have the following isomorphisms, which are $\V$-natural in $J \in \J$ and $A = (A, M) \in \scrT\Alg^!$:
\[ \scrT\Alg^!\left(\phi_\scrT J, A\right) \cong [\scrT, \V]\left(\scrT(\tau J, -), M\right) \cong M\tau J = \C\left(jJ, U^\scrT A\right). \] The $\V$-natural transformation $\eta : j \Rightarrow U^\scrT \phi_\scrT$ that exhibits $\phi_\scrT$ as a $j$-relative left adjoint for $U^\scrT$ \eqref{J_rel_adj} is precisely the unit $u^{\scrT} : j \Rightarrow T_\scrT = U^\scrT\phi_\scrT$ of $\scrT$ \eqref{Jtheoryfunctor}. 
}
\end{rmk}

Given an ordinary category $\X$, we shall say that a full subcategory $\G \hookrightarrow \X$ is \emph{strongly generating} if the representable functors $\X(G, -) : \X \to \Set$ ($G \in \ob\G$) are jointly conservative (i.e.~jointly reflect isomorphisms); this is equivalent to the conservativity of the nerve functor $N_i : \X \to \left[\G^\op, \Set\right]$ determined by the inclusion $i : \G \hookrightarrow \X$.   

\begin{lem}
\label{str_gen_lem}
Let $\X$ be a complete (ordinary) category, and suppose that $\G \hookrightarrow \X$ is strongly generating. Then the inclusion $\G \hookrightarrow \X$ preserves all small limits that exist. 
\end{lem}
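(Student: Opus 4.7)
The plan is to use the completeness of $\X$ to form a competing limit in $\X$ and then exploit strong generation to show that the comparison morphism is an isomorphism. Suppose a small diagram $D : \I \to \G$ admits a limit $(L, (p_i)_{i \in \ob\I})$ in $\G$. By the completeness of $\X$, the composite diagram $\I \xrightarrow{D} \G \hookrightarrow \X$ has a limit $(L', (q_i)_{i \in \ob\I})$ in $\X$, and the cone $(p_i)$, viewed in $\X$ through the inclusion, induces a unique comparison morphism $f : L \to L'$ in $\X$ satisfying $q_i \circ f = p_i$ for every $i \in \ob\I$. It will suffice to show that $f$ is an isomorphism.

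The key step is to verify that $\X(G, f) : \X(G, L) \to \X(G, L')$ is a bijection for every $G \in \ob\G$. Because $\G \hookrightarrow \X$ is full, we have $\X(G, L) = \G(G, L)$, and by the universal property of $L$ in $\G$ this set is in bijection with the set of cones from $G$ to $D$ in $\G$ via composition with the $p_i$. Again by fullness, the cones from $G$ to $D$ in $\G$ coincide with the cones from $G$ to $D$ in $\X$, which in turn correspond bijectively to $\X(G, L')$ via composition with the $q_i$, by the universal property of $L'$ in $\X$. A direct diagram chase, using $q_i \circ f = p_i$, shows that the composite of these bijections is precisely $\X(G, f)$, so $\X(G, f)$ is a bijection.

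Since $\G \hookrightarrow \X$ is strongly generating, the representables $\X(G, -)$ with $G \in \ob\G$ are jointly conservative, so $f$ is an isomorphism. Consequently $(L, (p_i))$ is also a limit of $\I \xrightarrow{D} \G \hookrightarrow \X$ in $\X$, as required. The argument is essentially a Yoneda-style verification, and the only point requiring some care is the identification of $\X(G, f)$ with the canonical bijection between the two presentations of the set of cones from $G$ to $D$; this is routine but must be matched up correctly.
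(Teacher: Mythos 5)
Your proof is correct and is essentially the paper's argument in unpacked form: the paper phrases it as "the nerve functor $N_i : \X \to [\G^\op,\Set]$ is conservative and preserves limits, hence reflects small limits (which exist in $\X$ by completeness), and $N_i \circ i \cong \y$ preserves all limits that exist," whereas you inline this by constructing the comparison morphism $f : L \to L'$ and verifying that each $\X(G,f)$ is a bijection before invoking joint conservativity. The mathematical content is identical, so no further comment is needed.
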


\begin{proof}
Writing $i : \G \hookrightarrow \X$ for the inclusion, the nerve functor $N_i : \X \to \left[\G^\op, \Set\right]$ is conservative and preserves limits, and hence reflects small limits (by, e.g.,~\cite[Proposition 2.9.7]{Borceux1}). Now let $D$ be a small diagram in $\G$ with limit $\limit D$ in $\G$. To show that $i : \G \hookrightarrow \X$ preserves this limit, it suffices to show that $N_i \circ i : \G \to \left[\G^\op, \Set\right]$ preserves this limit; but this is true because $N_i \circ i \cong \y : \G \to \left[\G^\op, \Set\right]$ and $\y$ preserves all limits that exist.      
\end{proof}

\noindent We shall employ the following result in proving Proposition \ref{cocompletecor} below.

\begin{prop}
\label{JTract_lims}
Suppose that $\C$ and $\V$ are complete. Then the inclusion $\J\Tract(\C) \hookrightarrow \V\CAT/\C$ preserves all small limits that exist.  
\end{prop}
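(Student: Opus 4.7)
The plan is to establish the stronger statement that $\J\Tract(\C)$ is \emph{closed} under small limits in $\V\CAT/\C$, from which preservation of existing small limits follows by the fullness of the inclusion and uniqueness of limits. Completeness of $\V$ yields completeness of $\V\CAT$, and thence of the slice $\V\CAT/\C$, using completeness of $\C$ to form the fibered products over $\C$ that appear in the construction of limits in a slice category.

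Let $D : \I \to \J\Tract(\C)$ be a small diagram with $D(i) = (\A_i, G_i)$, and let $(\bar{L}, \bar{H}) := \lim D$ in $\V\CAT/\C$, with projections $p_i : \bar{L} \to \A_i$ satisfying $G_i p_i = \bar{H}$. We must show $(\bar{L}, \bar{H})$ is $\J$-tractable, i.e., that $\{\C(J, \bar{H}-), \bar{H}\}$ exists in $\C$ for each $J \in \ob\J$. Each $(\A_i, G_i)$ being $\J$-tractable furnishes $C_{J,i} := \{\C(J, G_i-), G_i\}$ in $\C$ together with a universal $\V$-natural transformation $u_{J,i} : \C(J, G_i-) \Rightarrow \C(C_{J,i}, G_i-)$; the relation $G_j D(f) = G_i$ for $f : i \to j$ induces canonical comparison morphisms $C_{J,j} \to C_{J,i}$ via the universal property, so that $i \mapsto C_{J,i}$ is an $\I^\op$-indexed diagram in $\C$ with a limit $C^* := \lim_{\I^\op} C_{J,i}$ (available by completeness of $\C$). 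The plan is to show $C^* \cong \{\C(J, \bar{H}-), \bar{H}\}$: precomposing each $u_{J,i}$ with $p_i$ and postcomposing with the limit-cone morphisms $C^* \to C_{J,i}$ assembles a single $\V$-natural transformation $v^* : \C(J, \bar{H}-) \Rightarrow \C(C^*, \bar{H}-)$, whose universality must then be verified.

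With $(\bar{L}, \bar{H}) \in \J\Tract(\C)$ in hand, the conclusion is routine: if $D$ has a limit in $\J\Tract(\C)$, then $(\bar{L}, \bar{H})$ is itself also a limit of $D$ in the full subcategory $\J\Tract(\C) \hookrightarrow \V\CAT/\C$, so the given limit in $\J\Tract(\C)$ is canonically isomorphic to $(\bar{L}, \bar{H})$ and hence is a limit in $\V\CAT/\C$ as well. The main obstacle will be verifying the universality of $v^*$: every $\V$-natural transformation $\alpha : \C(J, \bar{H}-) \Rightarrow \C(C, \bar{H}-)$ on $\bar{L}$ must factor uniquely through $v^*$ via some morphism $C \to C^*$, which requires carefully leveraging the strict-limit description of $\bar{L}$ over $\C$ to decompose such an $\alpha$ in terms of compatible tuples of $\V$-natural transformations on the individual $\A_i$'s via the projections $p_i$.
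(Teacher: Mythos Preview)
Your approach has a genuine gap: the proposed formula $C^* = \lim_{\I^\op} C_{J,i}$ is incorrect, and the decomposition strategy underlying it does not work in general.

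The clearest failure is the empty diagram. The terminal object of $\V\CAT/\C$ is $(\C,1_\C)$, and one computes $\{\C(J,1_\C-),1_\C\} \cong J$ by the enriched Yoneda lemma; but your formula gives the terminal object of $\C$. More generally, your construction of $v^*$ only makes sense for \emph{connected} $\I$: the transformations $v_i := \C(q_i,\bar H-)\cdot(u_{J,i}\,p_i)$ agree along morphisms of $\I$, but for disconnected $\I$ there is no reason the various $v_i$ coincide, so there is nothing to ``assemble.'' Even in the connected case, the universality of $v^*$ would require that every $\V$-natural transformation $\C(J,\bar H-)\Rightarrow \C(X,\bar H-)$ on $\bar L$ arise from compatible transformations on the $\A_i$ via the $p_i$; but the projections $p_i:\bar L\to\A_i$ carry no joint-surjectivity or joint-fullness property that would justify this (the functor $[-,\V]$ does not turn limits in $\V\CAT$ into anything useful here). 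Incidentally, your remark that completeness of $\C$ is needed to form limits in $\V\CAT/\C$ is a slip: those fibered products are pullbacks in $\V\CAT$, which exist because $\V$ (hence $\V\CAT$) is complete.

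The paper's argument is entirely different and sidesteps any direct computation of $\{\C(J,\bar H-),\bar H\}$. One first proves a general lemma: if $\X$ is complete and a full subcategory $\G\hookrightarrow\X$ is strongly generating (the functors $\X(G,-)$ are jointly conservative), then the inclusion preserves all small limits that exist. One then observes that the $\V$-categories over $\C$ with exactly two objects form a strongly generating full subcategory of $\V\CAT/\C$; since $\C$ is complete, every such two-object $\V$-functor is $\J$-tractable, so $\J\Tract(\C)$ contains this subcategory and is therefore itself strongly generating. The lemma then applies. Note that this proves only \emph{preservation} of small limits that exist, not closure; your stronger closure claim is neither needed nor established.
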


\begin{proof}
Since $\V$ is complete, it is well known that $\V\CAT$ is complete, so that $\V\CAT/\C$ is complete. The full subcategory of $\V\CAT$ consisting of the $\V$-categories with exactly two objects is strongly generating, from which it readily follows that the full subcategory $\G \hookrightarrow \V\CAT/\C$ consisting of the $\V$-functors whose domains have exactly two objects is strongly generating. Since $\C$ is complete, every $\V$-functor $G : \A \to \C$ such that $\A$ has exactly two objects is $\J$-tractable, so that $\J\Tract(\C)$ contains $\G$ and is thus itself strongly generating. The result now follows immediately from Lemma \ref{str_gen_lem}. 
\end{proof}

\begin{defn}
\label{semfunctor}
{
We define the \textbf{semantics} functor $\Sem : \Preth_{\underJ}^\sfa(\C)^\op \to \J\Tract(\C)$ as follows. For an admissible $\J$-pretheory $\scrT$, the $\V$-functor $U^\scrT : \scrT\Alg^! \to \C$ is $\J$-tractable by Lemma \ref{adjointtractable}, and we define $\Sem\scrT$ to be $\left(\scrT\Alg^!, U^\scrT\right)$.

Now let $H : (\scrT, \tau) \to (\scrU, \upsilon)$ be a morphism of admissible $\J$-pretheories. We define $\Sem H : \scrU\Alg^! \to \scrT\Alg^!$ in $\V\CAT/\C$ by using the universal property of $\scrT\Alg^!$, as in the following diagram in $\V'\CAT$:
\[\begin{tikzcd}
	{\scrU\Alg^!} && {[\scrU, \V]} \\
	& {\scrT\Alg^!} && {[\scrT, \V]} \\
	\\
	& \C && {\left[\J^\op, \V\right]}.
	\arrow["{\sfM^\scrT}", from=2-2, to=2-4]
	\arrow["{[\tau, 1]}", from=2-4, to=4-4]
	\arrow["{U^\scrT}"', from=2-2, to=4-2]
	\arrow["{N_j}"', from=4-2, to=4-4]
	\arrow["{\Sem H}"{description}, dashed, from=1-1, to=2-2]
	\arrow["{U^\scrU}"', curve={height=18pt}, from=1-1, to=4-2]
	\arrow["{\sfM^\scrU}", from=1-1, to=1-3]
	\arrow["{[H, 1]}", from=1-3, to=2-4]
	\arrow["{[\upsilon, 1]}"{description}, from=1-3, to=4-4]
\end{tikzcd}\]
It is readily verified that $\Sem$ is functorial.  
}
\end{defn}

\begin{defn}
\label{new_structure}
Let $G : \A \to \C$ be a $\V$-functor, so that $(\A, G)$ is a $\V$-category over $\C$. A \textbf{structure $\J$-theory for $G$} is a $\J$-theory $\left(\Str G, \tau_G\right)$ equipped with a fully faithful $\V'$-functor $m_G : \Str G \rightarrowtail [\A, \V]$ such that $m_G \circ \tau_G = \C(j-, G?) : \J^\op \to [\A, \V]$, so that the following triangle commutes:
\begin{equation}\label{eq:str}
\begin{tikzcd}
	{\J^\op} && {[\A, \V]}. \\
	& {\Str G}
	\arrow["{\C(j-, G?)}", from=1-1, to=1-3]
	\arrow["{\tau_G}"', from=1-1, to=2-2]
	\arrow["{m_G}"', from=2-2, to=1-3]
\end{tikzcd}
\end{equation}
We may also write $\Str(\A, G)$ for $\Str G$. 
\end{defn}

\begin{rmk}\label{rem:jstr_gtract}
Given an arbitrary $\V$-functor $G:\A \to \C$, we now address the question of existence of a structure $\J$-theory for $G$.  We can always factor $\C(j-,G?):\J^\op \to [\A,\V]$ as an identity-on-objects $\V'$-functor $\tau_G$ followed by a fully faithful $\V'$-functor $m_G$ as in \eqref{eq:str}, and such a factorization is unique up to an isomorphism that commutes with the $\V'$-functors $\tau_G$, $m_G$.  Such a factorization $\J^\op \xrightarrow{\tau_G} \Str G \xrightarrow{m_G} [\A,\V]$ is a structure $\J$-theory for $G$ if and only if the pair $(\Str G,\tau_G)$ is a $\J$-theory, noting that the latter condition requires for each object $J$ of $\J$ the existence of an object $C$ of $\C$ such that $\C(j-,C) \cong \Str G(J,\tau_G-):\J^\op \to \V$ (with $\Str G$ then automatically a $\V$-category).  But in any case
$$\Str G(J,\tau_G K) \cong [\A,\V](\C(J,G-),\C(jK,G-))\;,$$
$\V'$-naturally in $K \in \J$, so an object $C$ with the needed property is equivalently a limit $\{\C(J,G-),G\}$ in $\C$.  Thus we obtain the following result:
\end{rmk}

\begin{prop}
\label{structure_facts_prop}
Let $G : \A \to \C$ be a $\V$-functor. Then $G$ admits a structure $\J$-theory iff $G$ is $\J$-tractable.  A structure $\J$-theory $\Str G$ for $G$ is unique up to isomorphism in $\Th_\J(\C)$ if it exists, in which case any isomorphic $\J$-pretheory is also a structure $\J$-theory for $G$. 
\end{prop}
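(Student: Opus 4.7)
The proof essentially spells out and extends Remark~\ref{rem:jstr_gtract}. First, I would invoke the standard (identity-on-objects, fully faithful) factorization system on $\V'\CAT$ to factor the $\V'$-functor $\C(j-, G?) : \J^\op \to [\A, \V]$ as $\J^\op \xrightarrow{\tau_G} \Str G \xrightarrow{m_G} [\A, \V]$ with $\tau_G$ identity-on-objects and $m_G$ fully faithful; this factorization is unique up to a unique iso under $\J^\op$ and over $[\A, \V]$. The task then reduces to deciding when the $\V'$-pretheory $(\Str G, \tau_G)$ is in fact a $\J$-theory in the sense of Definition~\ref{Jtheory}, which requires that each $\Str G(J, \tau_G -) : \J^\op \to \V$ be a $j$-nerve (and in particular that $\Str G$ be a $\V$-category).

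The key computation exploits the full faithfulness of $m_G$ and the identity $m_G \tau_G J = \C(jJ, G-)$ to yield, for $J, K \in \ob\J$, the natural iso
\[
\Str G(J, \tau_G K) \cong [\A, \V](\C(jJ, G-), \C(jK, G-)).
\]
Hence $\Str G(J, \tau_G -)$ is a $j$-nerve iff there exists $C_J \in \C$ with $\C(j-, C_J) \cong [\A, \V](\C(J, G-), \C(j-, G?))$ $\V'$-naturally in the remaining argument, in which case $\Str G(J, \tau_G K)$ lies in $\V$ for all $K$. I would then show that the existence of such a $C_J$ for every $J \in \J$ is equivalent to the $\J$-tractability of $G$, i.e., to the existence of the weighted limits $\{\C(J, G-), G\}$ in $\C$.

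The forward direction is immediate: if $L_J := \{\C(J, G-), G\}$ exists, restricting its defining iso $\C(-, L_J) \cong [\A, \V](\C(J, G-), \C(-, G?))$ along $j^\op$ exhibits $L_J$ as such a $C_J$. For the converse — which I expect to be the main obstacle — I would extend the restricted representation to all of $\C$ using density of $j$, equivalently the full faithfulness of $N_j$: for arbitrary $C' \in \C$,
\[
\C(C', C_J) \cong [\J^\op, \V](N_j C', N_j C_J) \cong [\J^\op, \V](\C(j-, C'), [\A, \V](\C(J, G-), \C(j-, G?))) \cong [\A, \V](\C(J, G-), \C(C', G?)),
\]
where the final step uses continuity of hom to swap the two enriched natural-transformation objects and then applies the iso $[\J^\op, \V](N_j C', N_j X) \cong \C(C', X)$ ($\V$-naturally in $X \in \C$) to the inner hom. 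Thus $C_J$ satisfies the universal property of $\{\C(J, G-), G\}$.

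Uniqueness follows at once from uniqueness of the (identity-on-objects, fully faithful) factorization in $\V'\CAT$ under $\J^\op$ and over $[\A, \V]$: any two structure $\J$-theories for $G$ are isomorphic via a unique iso in $\Preth_\J(\C)$, and hence in the full subcategory $\Th_\J(\C)$. Finally, if $H : \Str G \xrightarrow{\sim} \scrT'$ is an iso in $\Preth_\J(\C)$, then $m_G \circ H^{-1} : \scrT' \to [\A, \V]$ is a fully faithful $\V'$-functor exhibiting $\scrT'$ as a structure $\J$-theory for $G$.
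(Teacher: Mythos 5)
Your proof is correct and follows essentially the same route as the paper's Remark~\ref{rem:jstr_gtract}: factor $\C(j-,G?)$ through the (identity-on-objects, fully faithful) factorization, compute $\Str G(J,\tau_G K)\cong[\A,\V](\C(J,G-),\C(jK,G-))$, and observe that the required representing object is exactly the weighted limit $\{\C(J,G-),G\}$. The only difference is that you spell out the density argument for the converse direction (extending the representation from $\J$ to all of $\C$ via full faithfulness of $N_j$ and the interchange of hom-objects), which the paper leaves as an implicit one-line assertion; this is a correct and welcome elaboration.
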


\begin{defn}
\label{unique_structure_defn}
Let $G : \A \to \C$ be a $\J$-tractable $\V$-functor.  Then, by Proposition \ref{structure_facts_prop}, a structure $\J$-theory $\Str G$ for $G$ exists and is unique up to isomorphism of $\J$-theories, so we call $\Str G$ \textbf{the structure $\J$-theory for $G$}, or \textbf{the $\J$-structure of $G$}. 
\end{defn} 

\begin{defn}
\label{unitmorphism}
{
Let $G : \A \to \C$ be a $\J$-tractable $\V$-functor, so that $(\A, G)$ is a $\J$-tractable $\V$-category over $\C$. The fully faithful $\V'$-functor $m_G : \Str G \rightarrowtail [\A, \V]$ associated to $\Str G$ satisfies $m_G \circ \tau_G = \C(j-, G?)$, and therefore corresponds by \ref{T_alg_repr_para} to a morphism \[ \sfE_{(\A, G)} : (\A, G) \to \left(\Str G\Alg^!, U^{\Str G}\right) \] in $\V\CAT/\C$. Explicitly, $\sfE_{(\A, G)}$ is the induced $\V$-functor in the following diagram
\[\begin{tikzcd}
	\A \\
	& {\Str G\Alg^!} && {[\Str G, \V]} \\
	\\
	& \C && {\left[\J^\op, \V\right]}
	\arrow["{\sfM^{\Str G}}", from=2-2, to=2-4]
	\arrow["{\left[\tau_G, 1\right]}", from=2-4, to=4-4]
	\arrow["{U^{\Str G}}"', from=2-2, to=4-2]
	\arrow["{N_j}"', from=4-2, to=4-4]
	\arrow["{\sfE_{(\A, G)}}"{description}, dashed, from=1-1, to=2-2]
	\arrow["G"', curve={height=18pt}, from=1-1, to=4-2]
	\arrow["\widetilde{m}_G", curve={height=-18pt}, from=1-1, to=2-4]
\end{tikzcd}\]      
where $\widetilde{m}_G$ is the transpose of $m_G$.
}
\end{defn}

We now establish a structure--semantics adjunction for amenable subcategories of arities.

\begin{theo}
\label{strsemadjunction}
Let $\J \hookrightarrow \C$ be an amenable subcategory of arities. Then the semantics functor $\Sem : \Preth_{\underJ}^\sfa(\C)^\op \to \J\Tract(\C)$ has a left adjoint \[ \Str : \J\Tract(\C) \to \Preth_{\underJ}^\sfa(\C)^\op \] that sends a $\J$-tractable $\V$-category $(\A, G)$ over $\C$ to its $\J$-structure $\Str (\A, G) = \Str G$ \eqref{unique_structure_defn}. The unit of this adjunction at $(\A, G)$ is
\[ \sfE_{(\A, G)} : (\A, G) \to \left(\Str G\Alg^!, U^{\Str G}\right) = \Sem\Str (\A, G). \] 
\end{theo}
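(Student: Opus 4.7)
The plan is to verify that $\sfE_{(\A,G)}$ is universal from $(\A,G)$ to the functor $\Sem$, which is equivalent (modulo $\V\CAT$-functoriality) to establishing, for each admissible $\J$-pretheory $(\scrT,\tau)$, a natural bijection between morphisms $P : (\A,G) \to \Sem\scrT = (\scrT\Alg^!, U^\scrT)$ in $\V\CAT/\C$ and morphisms $H : \scrT \to \Str G$ in $\Preth_{\underJ}^\sfa(\C)$, with $P = \Sem(H) \circ \sfE_{(\A,G)}$. First I would note, using Proposition \ref{structure_facts_prop} and the $\J$-tractability of $G$, that $\Str G$ is indeed a $\J$-theory, and hence an admissible $\J$-pretheory by the amenability hypothesis, so that it lies in $\Preth_{\underJ}^\sfa(\C)$.

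Given $P : (\A,G) \to (\scrT\Alg^!, U^\scrT)$, I would apply the transposition described in \ref{T_alg_repr_para} to convert $P$ into a $\V'$-functor $P_2 : \scrT \to [\A,\V]$ satisfying $P_2 \circ \tau = \C(j-, G?)$. Recalling the defining factorization \eqref{eq:str} of $\C(j-, G?)$ as $\J^\op \xrightarrow{\tau_G} \Str G \xrightarrow{m_G} [\A,\V]$, I obtain a commutative square with identity-on-objects $\tau$ on the left and fully faithful $m_G$ on the right. The standard orthogonality between identity-on-objects $\V'$-functors and fully faithful $\V'$-functors then yields a unique $\V'$-functor $H : \scrT \to \Str G$ such that $H \circ \tau = \tau_G$ and $m_G \circ H = P_2$. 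Concretely, $H$ is defined as the identity on objects and, on homs, by the fully faithful action of $m_G$ on $P_2$; the equality $H \circ \tau = \tau_G$ follows from $m_G \circ H \circ \tau = P_2 \circ \tau = \C(j-, G?) = m_G \circ \tau_G$ together with the faithfulness of $m_G$. This $H$ is then a morphism of $\J$-pretheories.

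Next I would verify that $\Sem(H) \circ \sfE_{(\A,G)} = P$ in $\V\CAT/\C$. Both sides are morphisms $(\A,G) \to (\scrT\Alg^!, U^\scrT)$, and by the pullback diagram \eqref{eqn:concrete_pb} defining $\scrT\Alg^!$, equality reduces to showing that both sides become equal after postcomposition with $\sfM^\scrT$ (the agreement after postcomposition with $U^\scrT$ is automatic for morphisms in $\V\CAT/\C$). Using Definition \ref{semfunctor}, $\sfM^\scrT \circ \Sem(H) = [H,1] \circ \sfM^{\Str G}$, so by Definition \ref{unitmorphism} we obtain $\sfM^\scrT \circ \Sem(H) \circ \sfE_{(\A,G)} = [H,1] \circ \widetilde{m}_G$, whose transpose is $m_G \circ H = P_2$; but $P_2$ is exactly the transpose of $\sfM^\scrT \circ P$, so the two transposes agree and hence the morphisms themselves agree.

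For the uniqueness of $H$: if $H'$ is any morphism of $\J$-pretheories with $\Sem(H') \circ \sfE_{(\A,G)} = P$, running the same calculation gives $m_G \circ H' = P_2 = m_G \circ H$, and the faithfulness of $m_G$ forces $H' = H$. This establishes the universal property of $\sfE_{(\A,G)}$, giving the desired left adjoint $\Str$ on objects with the stated unit; functoriality of $\Str$ and naturality of the bijection follow automatically from the universal property. The only step that required genuine care beyond routine bookkeeping was matching the diagonal-fill construction inside $[\A,\V]$ with the pullback definition of $\Sem(H)$ via the two transpositions in \ref{T_alg_repr_para} and Definition \ref{unitmorphism}; this is the place where a careless diagram chase can go wrong, and I expect it to be the main point requiring care in a fully rigorous write-up.
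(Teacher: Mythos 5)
Your proposal is correct and follows essentially the same route as the paper's proof: transpose $P$ to $P_2:\scrT\to[\A,\V]$ via \ref{T_alg_repr_para}, obtain the unique comparison $H$ by the orthogonality of identity-on-objects $\V'$-functors against the fully faithful $m_G$, and verify $\Sem(H)\circ\sfE_{(\A,G)}=P$ by checking equality after postcomposition with $\sfM^\scrT$ and comparing transposes. The uniqueness argument via faithfulness of $m_G$ is just a restatement of the uniqueness of the diagonal filler used in the paper.
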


\begin{proof}
The $\J$-structure $\Str G$ of $G$ is a $\J$-theory and so, by the amenability of $\J$, is an admissible $\J$-pretheory. We now show that $\sfE_{(\A, G)}$ is a $\Sem$-universal arrow for $(\A, G)$. So let $P : (\A, G) \to \Sem\scrT = \left(\scrT\Alg^!, U^\scrT\right)$ be a morphism in $\V\CAT/\C$ for an admissible $\J$-pretheory $\scrT$, and let us show that there is a unique morphism of $\J$-pretheories $P^\sharp : \scrT \to \Str G$ such that $\Sem P^\sharp \circ \sfE_{(\A, G)} = P$, all as in the following diagram:
\[\begin{tikzcd}
	{(\A, G)} && {\left(\Str G\Alg^!, U^{\Str G}\right)} & {\Str G} \\
	\\
	&& {\left(\scrT\Alg^!, U^\scrT\right)} & \scrT
	\arrow["{\sfE_{(\A, G)}}", from=1-1, to=1-3]
	\arrow["P"', from=1-1, to=3-3]
	\arrow["{\Sem P^\sharp}", from=1-3, to=3-3]
	\arrow["{P^\sharp}"{description}, dashed, from=3-4, to=1-4]
\end{tikzcd}\]
By \ref{T_alg_repr_para}, the morphism $P$ is equivalently given by a $\V'$-functor $P_2 : \scrT \to [\A, \V]$ such that $P_2 \circ \tau = \C(j-, G?) : \J^\op \to [\A, \V]$, so that the outer square of the following diagram in $\V'\CAT$ commutes (by the definition of $\Str G$):
\begin{equation}\label{str_sem_square}
\begin{tikzcd}
	{\J^\op} && \scrT \\
	\\
	{\Str G} && {[\A, \V]}.
	\arrow["\tau", from=1-1, to=1-3]
	\arrow["{\tau_G}"', from=1-1, to=3-1]
	\arrow["{P_2}", from=1-3, to=3-3]
	\arrow["{m_G}"', from=3-1, to=3-3]
	\arrow["{P^\sharp}"{description}, dashed, from=1-3, to=3-1]
\end{tikzcd}
\end{equation}
By the orthogonality of identity-on-objects $\V'$-functors to fully faithful $\V'$-functors, there is a unique $\V'$-functor (and hence $\V$-functor) $P^\sharp : \scrT \to \Str G$ that makes the diagram commute, so that $P^\sharp$ is a morphism of $\J$-pretheories. 
The commutativity of the lower triangle in \eqref{str_sem_square} is equivalent to the commutativity of the following square, which entails that $\Sem P^\sharp \circ \sfE_{(\A, G)} = P$, since the left-hand side of the latter equation corresponds via \ref{T_alg_repr_para} to the lower-left composite in this square.
\[\begin{tikzcd}
	\A && {\scrT\Alg^!} \\
	\\
	{\left[\Str G, \V\right]} && {[\scrT, \V]}
	\arrow["P", from=1-1, to=1-3]
	\arrow["{\widetilde{m}_G}"', from=1-1, to=3-1]
	\arrow["{\left[P^\sharp, 1\right]}"', from=3-1, to=3-3]
	\arrow["{\sfM^\scrT}", from=1-3, to=3-3]
\end{tikzcd}\]
The required uniqueness of $P^\sharp$ then follows from the uniqueness of the diagonal filler in \eqref{str_sem_square}.       
\end{proof}

\begin{prop}
\label{idempotent_adjunction}\mbox{}
\begin{enumerate}[leftmargin=*]
\item The structure--semantics adjunction \eqref{strsemadjunction} is idempotent.
\item Let $\scrT$ be an admissible $\J$-pretheory. Then $\scrT$ is a $\J$-theory iff the counit morphism $\varepsilon_\scrT : \scrT \to \Str\Sem\scrT$ is invertible.
\item The restriction of $\Sem$ to $\Th_{\underJ}(\C)^\op$ is a fully faithful functor $\Sem : \Th_{\underJ}(\C)^\op \to \J\Tract(\C)$ with a left adjoint $\Str : \J\Tract(\C) \to \Th_{\underJ}(\C)^\op$.
\end{enumerate}
\end{prop}

\begin{proof}
We first prove (2). The morphism $\varepsilon_\scrT : \scrT \to \Str\Sem\scrT$ is defined by taking $H$ to be the identity morphism of $\Sem\scrT$ in the proof of Theorem \ref{strsemadjunction}, so that $\varepsilon_\scrT : \scrT \to \Str\Sem\scrT$ is the unique $\V$-functor making the following diagram in $\V'\CAT$ commute:
\[\begin{tikzcd}
	{\J^\op} && \scrT \\
	\\
	{\Str \Sem\scrT} && {\left[\scrT\Alg^!, \V\right]},
	\arrow["\tau", from=1-1, to=1-3]
	\arrow["{\tau_{U^\scrT}}"', from=1-1, to=3-1]
	\arrow["{\widetilde{\sfM^\scrT}}", from=1-3, to=3-3]
	\arrow["{m_{U^\scrT}}"', from=3-1, to=3-3]
	\arrow["{\varepsilon_\scrT}"{description}, dashed, from=1-3, to=3-1]
\end{tikzcd}\]
where $\widetilde{\sfM^\scrT}$ is the transpose of the $\V'$-functor $\sfM^\scrT : \scrT\Alg^! \to [\scrT, \V]$.  

If $\varepsilon_\scrT$ is invertible, then $\scrT$ is certainly a $\J$-theory, because $\Str\Sem\scrT$ is a $\J$-theory. Suppose conversely that $\scrT$ is a $\J$-theory. Since $\varepsilon_\scrT$ is identity-on-objects, it suffices to show that $\varepsilon_\scrT$ is fully faithful. And since $m_{U^\scrT}$ is fully faithful, it then suffices to show that $\widetilde{\sfM^\scrT} : \scrT \to \left[\scrT\Alg^!, \V\right]$ is fully faithful. By \ref{Jtheoryfunctor}, the fully faithful $\V$-functor $\mathsf{Y}_\scrT : \scrT^\op \to \scrT\Alg^!$ satisfies $N_{\mathsf{Y}_\scrT} \cong \sfM^\scrT : \scrT\Alg^! \to [\scrT, \V]$, from which it follows that $\widetilde{\sfM^\scrT} : \scrT \to \left[\scrT\Alg^!, \V\right]$ is isomorphic to the fully faithful composite $\V'$-functor $\scrT \xrightarrow{\mathsf{Y}_\scrT^\op} \left(\scrT\Alg^!\right)^\op \xrightarrow{\y} \left[\scrT\Alg^!, \V\right]$.

To prove (1), it suffices to show that the whiskered counit $\varepsilon\Str : \Str\Sem\Str \Rightarrow \Str$ is a natural isomorphism; but if $(\A, G)$ is a $\J$-tractable $\V$-category over $\C$, then $\Str G$ is a $\J$-theory, so that $\varepsilon_{\Str G}$ is an isomorphism by (2). Lastly, (3) follows from (1) and (2) together with general facts about idempotent adjunctions. 
\end{proof}

\begin{defn}
\label{J_alg_Vcategory}
{
Let $G : \A \to \C$ be a $\V$-functor, so that $(\A, G)$ is a $\V$-category over $\C$. We say that $G$, or $(\A, G)$, is \textbf{strictly $\J$-algebraic} if there is an admissible $\J$-pretheory $\scrT$ satisfying $\A \cong \scrT\Alg^!$ in $\V\CAT/\C$. We say that $G$, or $(\A, G)$, is \textbf{$\J$-algebraic} if there is an admissible $\J$-pretheory $\scrT$ with an equivalence $\A \simeq \scrT\Alg^!$ in the pseudo-slice 2-category $\V\CAT//\C$ \cite[2.5]{EAT}. Every $\J$-algebraic $\V$-functor is $\J$-tractable by Lemma \ref{adjointtractable}, so the strictly $\J$-algebraic $\V$-categories over $\C$ constitute a full subcategory of $\J\Tract(\C)$ that we denote by $\J\Alg^!(\C)$.
} 
\end{defn}

The full subcategory $\J\Alg^!(\C) \hookrightarrow \J\Tract(\C)$ is the essential image of $\Sem$, so by \ref{idempotent_adjunction} we obtain the following:

\begin{prop}
\label{unitinvertible}
Let $G:\A \to \C$ be a $\V$-functor, so that $(\A,G)$ is a $\V$-category over $\C$.  The following are equivalent: (1) $G$ is strictly $\J$-algebraic; (2) there is a $\J$-theory $\scrT$ such that $\A \cong \scrT\Alg^!$ in $\V\CAT/\C$; (3) $G$ is $\J$-tractable and the unit morphism $\sfE_{(\A, G)} : (\A, G) \to \Sem\Str G$ of the structure--semantics adjunction \eqref{strsemadjunction} is invertible. \qed
\end{prop}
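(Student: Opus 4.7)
The plan is to prove the three implications (2)$\Rightarrow$(1), (3)$\Rightarrow$(2), and (1)$\Rightarrow$(3), the last being the substantive one.

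The implication (2)$\Rightarrow$(1) is immediate from the amenability Assumption \ref{amenable_assn}, which ensures that every $\J$-theory is an admissible $\J$-pretheory, so any witness $\scrT$ for (2) is automatically a witness for (1). For (3)$\Rightarrow$(2), if $\sfE_{(\A,G)}$ is invertible then $(\A,G) \cong \Sem\Str G = (\Str G\Alg^!, U^{\Str G})$ in $\V\CAT/\C$, and $\Str G$ is a $\J$-theory by Definition \ref{unique_structure_defn}, so taking $\scrT := \Str G$ proves (2).

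The main implication is (1)$\Rightarrow$(3). Assume $(\A, G) \cong \Sem\scrT$ in $\V\CAT/\C$ for some admissible $\J$-pretheory $\scrT$. Admissibility provides a left adjoint to $U^\scrT$, so $U^\scrT$ is $\J$-tractable by Lemma \ref{adjointtractable}; since $\J$-tractability is invariant under isomorphism in $\V\CAT/\C$, it follows that $G$ is $\J$-tractable. For the invertibility of $\sfE_{(\A,G)}$, it suffices by naturality of $\sfE$ (along any chosen iso $(\A,G) \xrightarrow{\sim} \Sem\scrT$, noting that $\Sem\Str$ preserves isomorphisms) to show that $\sfE_{\Sem\scrT}$ is invertible. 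For this I invoke the standard fact that, for any adjunction $F \dashv G$ with unit $\eta$ and counit $\epsilon$, the four conditions that $\epsilon F$, $F\eta$, $G\epsilon$, $\eta G$ be natural isomorphisms are mutually equivalent and each is equivalent to the adjunction being idempotent (via the triangle identities and the equivalence between idempotency of the induced monad $GF$ and of the induced comonad $FG$). By Proposition \ref{idempotent_adjunction} our adjunction $\Str \dashv \Sem$ is idempotent, so in particular the whiskered unit $\sfE\Sem$ is a natural isomorphism, and hence $\sfE_{\Sem\scrT}$ is invertible.

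The main (and only) subtlety is the passage between two formulations of idempotency: the proof of Proposition \ref{idempotent_adjunction} verifies idempotency by exhibiting $\varepsilon\Str$ as a natural isomorphism (i.e.~the ``$\epsilon F$'' form), whereas the argument here requires the ``$\eta G$'' form that $\sfE\Sem$ is a natural isomorphism. Since these are classically equivalent characterizations of idempotency, this is a routine step, but it is worth flagging explicitly in the write-up so as not to appear to conflate the two.
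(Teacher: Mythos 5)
Your proof is correct and follows essentially the same route as the paper, which derives this proposition from the idempotence of the structure--semantics adjunction (Proposition \ref{idempotent_adjunction}) together with standard facts about idempotent adjunctions; your explicit flagging of the passage from the $\varepsilon\Str$ form of idempotence to the $\sfE\Sem$ form is exactly the right point to make. The only cosmetic difference is that you bypass Proposition \ref{counitinvertible} (which the paper also cites) by closing the cycle $(1)\Rightarrow(3)\Rightarrow(2)\Rightarrow(1)$ directly, using amenability for $(2)\Rightarrow(1)$ and taking $\Str G$ itself as the witnessing $\J$-theory in $(3)\Rightarrow(2)$.
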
 

\begin{cor}\label{thm:basic_charn_jalgcats}
A $\V$-functor $G:\A \to \C$ is $\J$-algebraic iff there is a $\J$-theory $\scrT$ with an equivalence $\A \simeq \scrT\Alg^!$ in $\V\CAT//\C$. Every $\J$-algebraic $\V$-functor is $\J$-tractable. \qed
\end{cor}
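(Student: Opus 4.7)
The plan is as follows. The ``if'' direction of the main biconditional is immediate from Definition \ref{J_alg_Vcategory}: by the amenability hypothesis \ref{amenable_assn}, every $\J$-theory is in particular an admissible $\J$-pretheory, so any witness for $\J$-algebraicity via a $\J$-theory is \emph{a fortiori} a witness via an admissible $\J$-pretheory. The second sentence of the corollary also requires little: if $(\A, G) \simeq (\scrT\Alg^!, U^\scrT)$ in $\V\CAT//\C$ for an admissible $\J$-pretheory $\scrT$, then $U^\scrT$ is $\J$-tractable by admissibility and Lemma \ref{adjointtractable}, and $\J$-tractability evidently transports across equivalences in $\V\CAT//\C$, since weighted limits are invariant under equivalences of the indexing $\V$-category and the defining $\V$-natural isomorphism $G \cong U^\scrT H$ identifies the weight $\C(J, G-)$ with $\C(J, U^\scrT H -)$.

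The nontrivial direction is ``only if''. Suppose $G$ is $\J$-algebraic, so that $(\A, G) \simeq (\scrT\Alg^!, U^\scrT) = \Sem \scrT$ in $\V\CAT//\C$ for some admissible $\J$-pretheory $\scrT$. I would set $\scrU := \Str U^\scrT = \Str \Sem \scrT$ and show that $\scrU$ is a $\J$-theory with $(\A, G) \simeq \Sem \scrU$ in $\V\CAT//\C$; the first is automatic from Definition \ref{new_structure}, since $\Str$ always produces a $\J$-theory. The key observation driving the second is that $\Sem\scrT$ is itself \emph{strictly} $\J$-algebraic, as it is by definition of the form $\scrT\Alg^!$ for an admissible $\J$-pretheory, matching condition (1) of Proposition \ref{unitinvertible}. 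Hence condition (3) of that proposition tells us that the unit $\sfE_{\Sem\scrT} : \Sem\scrT \to \Sem \Str \Sem \scrT = \Sem \scrU$ of the structure--semantics adjunction is invertible in $\V\CAT/\C$, and therefore an equivalence in the pseudo-slice $\V\CAT//\C$. Composing it with the given equivalence $(\A, G) \simeq \Sem \scrT$ yields the desired equivalence $(\A, G) \simeq \Sem \scrU = (\scrU\Alg^!, U^\scrU)$ with $\scrU$ a $\J$-theory.

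I do not anticipate any serious obstacle: the argument is essentially a formal consequence of the idempotency of the structure--semantics adjunction (Proposition \ref{idempotent_adjunction}), as already packaged in Proposition \ref{unitinvertible}. The mildest technical point is the preservation of $\J$-tractability under pseudo-slice equivalence, which is a routine consequence of the invariance of weighted limits under equivalence. The overall shape of the proof is: upgrade the witnessing admissible $\J$-pretheory $\scrT$ to the $\J$-theory $\Str\Sem\scrT$, and use the strict $\J$-algebraicity of $\Sem\scrT$ to see that this upgrade costs nothing at the level of semantics.
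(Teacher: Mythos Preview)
Your proposal is correct and follows essentially the same route as the paper, which treats the corollary as an immediate consequence of Proposition \ref{unitinvertible}: since $\Sem\scrT$ is strictly $\J$-algebraic, the equivalence (1)$\Leftrightarrow$(2) of that proposition supplies a $\J$-theory $\scrU$ with $\scrT\Alg^! \cong \scrU\Alg^!$ in $\V\CAT/\C$, which one then composes with the given pseudo-slice equivalence. Your explicit choice $\scrU = \Str\Sem\scrT$ and appeal to condition (3) rather than (2) is the same argument unpacked one step further, and your treatment of $\J$-tractability under pseudo-slice equivalence is the routine point the paper leaves implicit.
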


The following theorem now follows immediately from Propositions \ref{idempotent_adjunction} and \ref{unitinvertible}: 

\begin{theo}
\label{theory_Jalg_equiv}
Let $\J \hookrightarrow \C$ be an amenable subcategory of arities. Then the structure--semantics adjunction \eqref{strsemadjunction} restricts to an equivalence
\[\begin{tikzcd}
	{\Th_{\underJ}(\C)^\op} &&& {} & {\J\Alg^!(\C)}
	\arrow[""{name=0, anchor=center, inner sep=0}, "\Sem", shift left=3, from=1-1, to=1-5]
	\arrow[""{name=1, anchor=center, inner sep=0}, "\Str", shift left=3, from=1-5, to=1-1]
	\arrow["\sim"{anchor=center}, draw=none, from=1, to=0]
\end{tikzcd} \]
between $\J$-theories and strictly $\J$-algebraic $\V$-categories over $\C$. Furthermore, we have that $\J\Alg^!(\C) \hookrightarrow \J\Tract(\C)$ is reflective with reflector $\Sem \Str \, : \,\J\Tract(\C) \to \J\Alg^!(\C)$, and $\Th_{\underJ}(\C) \hookrightarrow \Preth_{\underJ}^\sfa(\C)$ is reflective with reflector $\Str\Sem: \Preth_{\underJ}^\sfa(\C) \to \Th_{\underJ}(\C)$. \qed   
\end{theo}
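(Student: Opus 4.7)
The plan is to deduce the theorem directly from three ingredients already in hand: the idempotency of the structure--semantics adjunction (Proposition \ref{idempotent_adjunction}), the characterization of those admissible $\J$-pretheories $\scrT$ at which the counit $\varepsilon_\scrT$ is invertible (Proposition \ref{counitinvertible}), and the characterization of those $\J$-tractable $(\A, G)$ at which the unit $\sfE_{(\A, G)}$ is invertible (Proposition \ref{unitinvertible}). These will be glued together via the standard general theory of idempotent adjunctions, with no new constructions required.

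First I would invoke the well-known fact that for any idempotent adjunction $L \dashv R$ between categories $\E$ and $\D$, the full subcategory $\E_0 \hookrightarrow \E$ of objects at which the unit is invertible is reflective with reflector $RL$, the full subcategory $\D_0 \hookrightarrow \D$ of objects at which the counit is invertible is coreflective with coreflector $LR$, and $L$ and $R$ restrict to a pair of mutually quasi-inverse equivalences $\E_0 \simeq \D_0$. Applying this with $L = \Str$, $R = \Sem$, $\E = \J\Tract(\C)$, and $\D = \Preth_{\underJ}^\sfa(\C)^\op$, Proposition \ref{unitinvertible} identifies $\E_0$ with $\J\Alg^!(\C) \hookrightarrow \J\Tract(\C)$, while Proposition \ref{counitinvertible} identifies $\D_0$ with $\Th_{\underJ}(\C)^\op \hookrightarrow \Preth_{\underJ}^\sfa(\C)^\op$. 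The restricted equivalence $\Th_{\underJ}(\C)^\op \simeq \J\Alg^!(\C)$ together with the coreflectivity of $\J\Alg^!(\C)$ in $\J\Tract(\C)$ with coreflector $\Sem\Str$ follow at once.

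The only step that requires a moment of care is translating the coreflectivity of $\Th_{\underJ}(\C)^\op$ in $\Preth_{\underJ}^\sfa(\C)^\op$ (with coreflector $\Str\Sem$, in the opposite categories) into the reflectivity of $\Th_{\underJ}(\C)$ in $\Preth_{\underJ}^\sfa(\C)$ (with the same reflector $\Str\Sem$), which is just dualization. I do not expect any substantive obstacle: the proof is purely a matter of assembling the three cited propositions through the general calculus of idempotent adjunctions.
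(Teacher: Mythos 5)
Your proposal is correct and follows exactly the route the paper takes: the theorem is stated there as an immediate consequence of Propositions \ref{counitinvertible}, \ref{unitinvertible}, and \ref{idempotent_adjunction} via the standard general facts about idempotent adjunctions, which is precisely the assembly you describe.
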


\begin{lem}
\label{modification_lem}
Let $\scrT$ be a $\J$-pretheory, let $G : \A \to \C$ be a $\V$-functor, let $m : \scrT \rightarrowtail [\A, \V]$ be a fully faithful $\V'$-functor, and let $\alpha$ be an invertible 2-cell in $\V'\CAT$ as in the leftmost diagram below. Then there is a unique pair $(m', \alpha')$ consisting of a fully faithful $\V'$-functor $m'$ and an invertible 2-cell $\alpha'$ as in the rightmost diagram below, such that the lower triangle in that diagram commutes and the resulting pasted 2-cell equals $\alpha$:
$$
\xymatrix{
\scrT \ar[rr]^m \ar@{}[rr]^(.2){}="s3" & & [\A,\V] & & \scrT \ar@/^4ex/[rr]^{m}="s1" \ar[rr]_{m'}="t1" & & [\A,\V]\\
\J^\op \ar[u]^\tau \ar[urr]_{\C(j-,G?)}="t3" & & & & \J^\op \ar[u]^\tau \ar[urr]_{\;\C(j-,G?)}
\ar@{}"s1";"t1"|(.35){}="s2"|(.65){}="t2"
\ar@{=>}"s2";"t2"^{\alpha'}
\ar@{}"s3";"t3"|(.3){}="s4"|(.6){}="t4"
\ar@{=>}"s4";"t4"^{\alpha}
}
$$
Consequently, if $G$ is $\J$-tractable, then $\scrT$ is a structure $\J$-theory for $G$ when equipped with $m' : \scrT \rightarrowtail [\A, \V]$.
\end{lem}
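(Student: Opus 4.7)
The plan is to view this as a direct application of the discrete isofibration property recorded in \ref{discrete_iso}. Since $\tau : \J^\op \to \scrT$ is identity-on-objects, the same elementary argument cited there shows that precomposition by $\tau$ yields a discrete isofibration between the ordinary categories of $\V'$-functors and $\V'$-natural transformations $[\scrT, [\A, \V]]_0 \to [\J^\op, [\A, \V]]_0$. The given 2-cell $\alpha$ is an isomorphism in the codomain whose domain is the image $m \tau$ of $m$, so there is a unique isomorphism $\alpha' : m \xrightarrow{\sim} m'$ whose image under precomposition is $\alpha$; unpacking, this says precisely that $m' \tau = \C(j-, G?)$ (so the lower triangle commutes) and $\alpha' \tau = \alpha$ (so the pasted 2-cell equals $\alpha$). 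Existence and uniqueness of the pair $(m', \alpha')$ both follow at once.

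It remains to check that $m'$ is fully faithful. By the $\V$-naturality of $\alpha'$, the hom-morphism $m'_{J, K}$ equals the composite
\[ \scrT(J, K) \xrightarrow{m_{J, K}} [\A, \V](mJ, mK) \xrightarrow{[\A, \V]({\alpha'_J}^{-1}, \alpha'_K)} [\A, \V](m'J, m'K) \]
for each $J, K \in \ob\scrT$; the second factor is an isomorphism in $\V'$, so $m'_{J, K}$ is an isomorphism whenever $m_{J, K}$ is, and hence $m'$ inherits full faithfulness from $m$.

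For the final consequence, assume that $G$ is $\J$-tractable, so that by Proposition \ref{structure_facts_prop} the structure $\J$-theory $\Str G$ for $G$ exists. Equipped with $m'$, the $\J$-pretheory $\scrT$ realizes a factorization of $\C(j-, G?)$ as an identity-on-objects $\V'$-functor followed by a fully faithful one, as in Remark \ref{rem:jstr_gtract}; such factorizations are unique up to unique isomorphism commuting with the factoring data, so $\scrT$ is isomorphic as a $\J$-pretheory to $\Str G$. The last clause of Proposition \ref{structure_facts_prop} then yields that $\scrT$ itself, with $m'$, is a structure $\J$-theory for $G$. The only nontrivial bookkeeping is matching the diagrammatic pasting equation with the discrete isofibration lift; all else reduces to formal properties already established.
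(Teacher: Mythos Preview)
Your proof is correct and follows the same essential strategy as the paper's---an application of the discrete isofibration machinery from \ref{discrete_iso}---but with a slightly more economical choice of isofibration. The paper lifts along the forgetful $\V'$-functor $[\scrT,[\A,\V]] \to [\ob\scrT,[\A,\V]]$ to obtain $(m',\alpha')$, and then invokes a second isofibration $[\J^\op,[\A,\V]] \to [\ob\J,[\A,\V]]$ to verify that the lower triangle $m'\tau = \C(j-,G?)$ commutes (rather than merely holding objectwise). You instead lift directly along $[\tau,1]:[\scrT,[\A,\V]] \to [\J^\op,[\A,\V]]$, which yields both $(m',\alpha')$ and the strict commutativity of the triangle in a single step, and the uniqueness claim follows immediately from the uniqueness of the lift. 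Your explicit check that $m'$ inherits full faithfulness from $m$ via conjugation by the components of $\alpha'$ is a detail the paper leaves implicit. For the final consequence, the paper appeals directly to \ref{rem:jstr_gtract} (the factorization is a structure $\J$-theory iff $\scrT$ is a $\J$-theory, which holds since the needed limits exist by $\J$-tractability), whereas you route through the existence and uniqueness clauses of Proposition~\ref{structure_facts_prop}; both arguments are valid.
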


\begin{proof}
Writing $\ob\J$ for the discrete $\V$-category on the objects of $\J$ and similarly for $\ob\scrT$, the forgetful $\V'$-functors $\left[\J^\op, [\A, \V]\right] \to \left[\ob\J, [\A, \V]\right]$ and $\left[\scrT, [\A, \V]\right] \to \left[\ob\scrT, [\A, \V]\right]$ are discrete isofibrations by \ref{discrete_iso}, since they are given by precomposition with the canonical identity-on-objects $\V$-functors $\ob\J \to \J^\op$ and $\ob\scrT \to \scrT$. Since $\tau$ is identity-on-objects, the needed pair $(m', \alpha')$ can be obtained by using the second of these two discrete isofibrations, while the commutativity of the lower righthand triangle above may be proved using the first of these discrete isofibrations. We then deduce the final assertion from \ref{rem:jstr_gtract} and \ref{structure_facts_prop}.
\end{proof}

If a $\V$-functor $G:\A \to \C$ has a $j$-relative left adjoint $F$ \eqref{J_rel_adj}, then $G$ is $\J$-tractable (by \ref{adjointtractable}) and so has a structure $\J$-theory, by \ref{structure_facts_prop}. In this special case, the structure $\J$-theory $\Str G$ admits the following convenient description in terms of $F$, which we use in proving a characterization theorem for $\J$-algebraic $\V$-functors (Theorem \ref{J_alg_char_thm}).  

\begin{prop}
\label{Str_prop}
Let $G : \A \to \C$ be a $\V$-functor with a $j$-relative left adjoint $F : \J \to \A$. Let
\begin{equation}\label{eq:A_F}
\begin{tikzcd}
	{\J} && {\A} \\
	& {\A_F}
	\arrow["{F}", from=1-1, to=1-3]
	\arrow["{i_F}"', from=1-1, to=2-2]
	\arrow["{j_F}"', from=2-2, to=1-3]
\end{tikzcd}
\end{equation}
be the factorization of $F$ as an identity-on-objects $\V$-functor $i_F$ followed by a fully faithful $\V$-functor $j_F$. Then there is a fully faithful $\V'$-functor $m_G : \A_F^\op \to [\A, \V]$ that is isomorphic to the composite $\A_F^\op \xrightarrow{j_F^\op} \A^\op \xrightarrow{\y} [\A, \V]$ and makes the $\J$-pretheory $\left(\A_F^\op, i_F^\op\right)$ a structure $\J$-theory for $G$ \eqref{new_structure}. Thus we may take $\left(\Str G, \tau_G\right) = \left(\A_F^\op, i_F^\op\right)$.

Writing $\eta : j \Rightarrow GF$ for the $\V$-natural transformation that exhibits $F$ as a $j$-relative left adjoint for $G$ \eqref{J_rel_adj}, we can take $T_{\Str G} = GF:\J \to \C$, and we can take the unit $u^{\Str G} : j \Rightarrow T_{\Str G}$ \eqref{Jtheoryfunctor} of $\Str G$ to be $\eta$. With these choices, $\eta : j \Rightarrow GF = U^{\Str G}\sfE_{(\A, G)}F$ exhibits $\sfE_{(\A, G)}F$ as a $j$-relative left adjoint for $U^{\Str G}$.    
\end{prop}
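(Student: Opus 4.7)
The plan has three parts, corresponding to the three claims of the proposition. For the first claim, I apply Lemma \ref{modification_lem} with the fully faithful $\V'$-functor $m := \y \circ j_F^\op : \A_F^\op \to [\A, \V]$, the identity-on-objects $\V$-functor $i_F^\op : \J^\op \to \A_F^\op$, and the invertible 2-cell $\alpha : m \circ i_F^\op \Rightarrow \C(j-, G?)$ whose components are the $j$-relative adjunction isomorphisms $\Phi_{J, A} : \A(FJ, A) \cong \C(jJ, GA)$. This produces a fully faithful $\V'$-functor $m_G : \A_F^\op \to [\A, \V]$ with iso $\alpha' : \y \circ j_F^\op \Rightarrow m_G$ satisfying $m_G \circ i_F^\op = \C(j-, G?)$ strictly and $\alpha' \circ i_F^\op = \alpha$; since $G$ is $\J$-tractable by Lemma \ref{adjointtractable}, this data renders $(\A_F^\op, i_F^\op)$ together with $m_G$ a structure $\J$-theory for $G$. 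For the second claim, $j_F$ fully faithful gives $\Str G(i_F^\op J, i_F^\op K) = \A_F(i_F K, i_F J) = \A(FK, FJ)$, which via $\Phi$ is $\V$-naturally isomorphic in $K$ to $\C(jK, GFJ)$, so one may take $T_{\Str G} = GF$; and $u^{\Str G}_J$, defined as the image of $1_J \in \Str G_0(J, J) = \A_0(FJ, FJ)$ under $\Phi_{J, FJ}$, is $\Phi_{J, FJ}(1_{FJ}) = \eta_J$, so $u^{\Str G} = \eta$.

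For the third claim, I verify directly that the composite
\[
\Str G\Alg^!(\sfE_{(\A, G)} FJ, X) \xrightarrow{U^{\Str G}} \C(GFJ, U^{\Str G} X) \xrightarrow{\C(\eta_J, 1)} \C(jJ, U^{\Str G} X)
\]
is a $\V$-iso for each $J \in \J$ and $X = (C, M) \in \Str G\Alg^!$. Since $\sfM^{\Str G}$ is fully faithful, the domain equals $[\Str G, \V](\widetilde{m_G}(FJ), M)$. The transpose of $\alpha'$ gives an iso $\widetilde{\alpha'}_{FJ} : \A_F(-, i_F J) \cong \widetilde{m_G}(FJ)$, and enriched Yoneda yields $[\Str G, \V](\A_F(-, i_F J), M) \cong M(i_F J) = M(\tau_G J) = \C(jJ, C)$, so composing produces a $\V$-natural iso $\Str G\Alg^!(\sfE_{(\A, G)} FJ, X) \cong \C(jJ, U^{\Str G} X)$. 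Both maps are $\V$-natural in $X$, so by enriched Yoneda applied to the representable $\Str G\Alg^!(\sfE_{(\A, G)} FJ, -)$, it suffices to compare their values at $X = \sfE_{(\A, G)} FJ$ on $1$. The prescribed composite sends $1 \mapsto 1_{GFJ} \mapsto \eta_J$. The Yoneda iso chases $1$ to $\widetilde{\alpha'}_{FJ}$, then by Yoneda-evaluation at $(i_F J, 1_{FJ})$ to $\alpha'_{i_F^\op J}(FJ)(1_{FJ}) = \alpha_J(FJ)(1_{FJ}) = \Phi_{J, FJ}(1_{FJ}) = \eta_J$, using $\alpha' \circ i_F^\op = \alpha$; hence the two maps coincide.

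The principal obstacle is the enriched-Yoneda chase at the end of the third step, which requires a careful identification of how $\widetilde{\alpha'}_{FJ}$ restricts along $i_F^\op$. The key insight, forced by the construction in Lemma \ref{modification_lem}, is the relation $\alpha' \circ i_F^\op = \alpha$: tracing the Yoneda element through this restriction yields precisely $\Phi_{J, FJ}(1_{FJ}) = \eta_J$, matching the value of the prescribed composite at $1$.
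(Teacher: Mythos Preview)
Your proof is correct and follows essentially the same approach as the paper's own proof: both apply Lemma \ref{modification_lem} to $\y \circ j_F^\op$ and the relative-adjunction isomorphisms $\alpha$ to obtain $m_G$ and $\alpha'$ with $\alpha' \circ i_F^\op = \alpha$, then identify $T_{\Str G} = GF$ and $u^{\Str G} = \eta$ directly, and finally use the transpose $\widetilde{\alpha'}$ together with the fully faithfulness of $\sfM^{\Str G}$ and the Yoneda lemma to produce the required chain of isomorphisms, verifying via the relation $\alpha' \circ i_F^\op = \alpha$ that the unit obtained by chasing the identity is exactly $\eta_J$. The only cosmetic difference is that you explicitly compare the Yoneda-constructed isomorphism against the prescribed composite $\C(\eta_J,1) \circ U^{\Str G}$ by evaluating both at $1$, whereas the paper phrases the same computation as identifying the unit of the representation \eqref{eq:repr}.
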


\begin{proof}
We have isomorphisms
\begin{equation}\label{eq:jrel_adj_isos}\alpha_{JA}\;:\;\A(FJ,A) \overset{\sim}{\longrightarrow} \C(jJ,GA)\end{equation}
that are $\V$-natural in $J \in \J$ and $A \in \A$ and so constitute an isomorphism
$$\alpha\;:\;\y \circ F^\op \overset{\sim}{\Longrightarrow} \C(j-,G?)\;:\;\J^\op \longrightarrow [\A,\V],$$ where $\y:\A^\op \to [\A,\V]$ is the Yoneda embedding.  The $\V'$-functor
$$\ell := \y \circ j_F^\op\:\;\A_F^\op \longrightarrow [\A,\V]$$
is fully faithful, and its composite with $\tau_G = i_F^\op:\J^\op \to \Str G = \A_F^\op$ is $\ell \circ \tau_G = \y \circ j_F^\op \circ i_F^\op = \y \circ F^\op:\J^\op \to [\A,\V]$, so $\alpha$ is an isomorphism of the form
$$\alpha\;:\;\ell \circ \tau_G \overset{\sim}{\Longrightarrow} \C(j-,G?)\;:\;\J^\op \longrightarrow [\A,\V]\;.$$
Hence, by Lemma \ref{modification_lem} we obtain a fully faithful $\V'$-functor $m_G:\Str G \to [\A,\V]$ that makes $(\Str G,\tau_G) := (\A_F^\op,i_F^\op)$ a structure $\J$-theory for $G$, and we obtain also an isomorphism $\alpha':\ell \overset{\sim}{\Rightarrow} m_G$ such that $\alpha' \circ \tau_G = \alpha$. In particular, $m_G \circ \tau_G = \C(j-,G?):\J^\op \to [\A,\V]$.

Since $\Str G(\tau_G J,\tau_G K) = \A_F(i_F K,i_F J) = \A(FK,FJ)$ $(J,K \in \J)$, the isomorphisms
$$\alpha_{-,FJ}\;:\;\Str G(\tau_G J,\tau_G-) = \A(F-,FJ) \overset{\sim}{\longrightarrow} \C(j-,GFJ)\;\;\;\;(J \in \J)$$
witness that $\Str G$ is a $\J$-theory with $T_{\Str G} = GF$ and with unit $u^{\Str G} = \eta$. 

Note that $\alpha':\ell \overset{\sim}{\Rightarrow} m_G$ is literally the same family of isomorphisms $\alpha_{JA}$ that appear in \eqref{eq:jrel_adj_isos}, except now seen as a family 
$$\alpha'_{JA}\;:\;(\ell J)A = \A(j_F J,A) \overset{\sim}{\longrightarrow} (m_G J)A$$
$\V$-natural in $J \in \Str G$ and $A \in \A$.  Writing $\widetilde{\ell},\widetilde{m}_G:\A \to [\Str G,\V]$ to denote the transposes of $\ell$ and $m_G$, this same $\V$-natural family provides an isomorphism
$$\widetilde{\alpha}\;:\;\widetilde{\ell} \overset{\sim}{\Longrightarrow} \widetilde{m}_G\;.$$
To prove the final assertion, let us write $\sfE = \sfE_{(\A,G)}:\A \to \Str G\Alg^!$ and $\sfM = \sfM^{\Str G}:\Str G\Alg^! \to [\Str G,\V]$, and recall from \ref{unitmorphism} that $\sfM \sfE = \widetilde{m}_G$.  Hence $\widetilde{\alpha}:\widetilde{\ell} \overset{\sim}{\Rightarrow} \sfM\sfE$ and in particular $\widetilde{\alpha} F: \widetilde{\ell} F \overset{\sim}{\Rightarrow} \sfM \sfE F:\J \to [\Str G,\V]$.  But
$$\widetilde{\ell}FJ = (\ell-)FJ = \A(j_F-,FJ) = \A(j_F-,j_Fi_F J) = \Str G(\tau_G J,-)\;\;\;\;\;\;\;(J \in \J),$$
so $\widetilde{\alpha} F$ is a family of isomorphisms
\begin{equation}\label{eq:repr}\widetilde{\alpha}_{FJ} = \alpha'_{-,FJ}\;:\;\Str G(\tau_G J,-) \overset{\sim}{\Longrightarrow} (m_G -)FJ = \sfM\sfE FJ\end{equation}
$\V$-natural in $J \in \J$, each of which is a representation whose unit $I \to (m_G\tau_G J)FJ$ corresponds to the identity morphism on $\tau_G J$ in $\Str G$.  But the component 
$$\alpha'_{\tau_G J,FJ}\;:\;\Str G(\tau_G J,\tau_G J) \overset{\sim}{\longrightarrow} (m_G \tau_G J)FJ$$
of \eqref{eq:repr} that is obtained by evaluating at the object $\tau_G J$ is precisely
$$\alpha_{J,FJ}\;:\;\A(FJ,FJ) \overset{\sim}{\longrightarrow} \C(J,GFJ),$$
so the unit of the representation \eqref{eq:repr} is precisely the morphism $\eta_J:J \to GFJ$ in $\C$.

Using these observations, together with the fact that $\sfM:\Str G\Alg^! \to [\Str G,\V]$ is fully faithful (and w.l.o.g.~identity-on-homs) we obtain isomorphisms
\begin{eqnarray*}
\Str G\Alg^!(\sfE F J,A) & = & [\Str G,\V](\sfM \sfE FJ,\sfM A)\\
 & \cong & [\Str G,\V](\Str G(\tau_G J,-),\sfM A)\\
 & \cong & (\sfM A)\tau_G J\\
 & = & \C(jJ,U^{\Str G}A)
\end{eqnarray*}
that are $\V$-natural in $J \in \J$ and $A \in \Str G\Alg^!$ and witness that $\sfE F$ is a $j$-relative left adjoint for $U^{\Str G}:\Str G\Alg^! \to \C$.  The unit of this $j$-relative adjunction is obtained by chasing $1_{\sfE FJ}$ along these isomorphisms and so is precisely the unit of the representation \eqref{eq:repr}, namely $\eta_J:J \to GFJ = U^{\Str G}\sfE FJ$.
\end{proof}

\noindent We now establish the following intrinsic characterization of (strictly) $\J$-algebraic $\V$-categories:

\begin{theo}
\label{J_alg_char_thm}
Let $j : \J \hookrightarrow \C$ be an amenable subcategory of arities, and let $G : \A \to \C$ be a $\V$-functor. Then $G$ is a $\J$-algebraic $\V$-functor iff the following conditions are satisfied: 
\begin{enumerate}[leftmargin=*]
\item $G : \A \to \C$ has a $j$-relative left adjoint $F : \J \to \A$ \eqref{J_rel_adj}.
\item The fully faithful $\V$-functor $j_F : \A_F \rightarrowtail \A$ of \eqref{eq:A_F} is dense.
\item A presheaf $X : \A_F^\op \to \V$ is a $j_F$-nerve iff the presheaf $X \circ i_F^\op : \J^\op \to \V$ is a $j$-nerve. 
\end{enumerate}
Furthermore, $G$ is a strictly $\J$-algebraic $\V$-functor iff conditions (1)--(3) are satisfied as well as
\begin{enumerate}[leftmargin=*]
\item[4.] $G : \A \to \C$ is a discrete isofibration \eqref{discrete_iso}.
\end{enumerate}
\end{theo}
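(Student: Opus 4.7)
My plan is to build both characterizations around the counit morphism $\sfE_{(\A, G)} : (\A, G) \to \Sem\Str G$ of Definition \ref{unitmorphism}: first I would establish the forward direction for the canonical case $G = U^\scrT$, then prove both backward directions in general, and finally reduce the plain $\J$-algebraic forward direction to the strict one using Proposition \ref{discrete_iso_prop}.

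For the forward direction of the strictly $\J$-algebraic characterization, I would invoke Proposition \ref{unitinvertible} to reduce to checking (1)--(4) for $G = U^\scrT$ with $\scrT$ a $\J$-theory. Remark \ref{phi_rmk} exhibits $\phi_\scrT = \mathsf{Y}_\scrT \circ \tau^\op$ as a $j$-relative left adjoint, so by uniqueness of the identity-on-objects/fully-faithful factorization, $\A_F \cong \scrT^\op$ with $i_F \cong \tau^\op$ and $j_F \cong \mathsf{Y}_\scrT$; the density of $\mathsf{Y}_\scrT$ recorded in \ref{Jtheoryfunctor} yields (2). For (3), a presheaf $X : \scrT \to \V$ is a $\mathsf{Y}_\scrT$-nerve iff it lies in the essential image of $N_{\mathsf{Y}_\scrT} \cong \sfM^\scrT$, which by the pullback \eqref{eqn:concrete_pb} and the discrete isofibration $[\tau, 1]$ (\ref{discrete_iso}) is equivalent to $X \circ \tau$ being a $j$-nerve. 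Condition (4) is immediate from \ref{discrete_iso}.

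For the backward direction, I would assume (1)--(3) and show that $\sfE_{(\A, G)}$ is an equivalence of $\V$-categories. By Proposition \ref{Str_prop}, $\Str G := \A_F^\op$ is a structure $\J$-theory for $G$ (admissible by amenability) and $\widetilde{m}_G \cong N_{j_F}$. Since $\sfM^{\Str G} \circ \sfE_{(\A, G)} = \widetilde{m}_G$ and both $\sfM^{\Str G}$ and $N_{j_F}$ (by (2)) are fully faithful, so is $\sfE_{(\A, G)}$. For essential surjectivity, a concrete $\Str G$-algebra $(C, M)$ satisfies $M \circ i_F^\op = \C(j-, C)$, a $j$-nerve, so by (3) $M$ is a $j_F$-nerve, hence $M \cong \widetilde{m}_G(A)$ for some $A \in \A$, and this isomorphism lifts through the fully faithful $\sfM^{\Str G}$ to an iso $\sfE_{(\A, G)}(A) \cong (C, M)$ in $\Str G\Alg^!$. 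Thus $G$ is $\J$-algebraic. If (4) also holds, I would upgrade to an isomorphism as follows: given such an iso $g : \sfE_{(\A, G)}(A) \to (C, M)$, lift $G(g)^{-1}$ through the discrete isofibration $G$ to a unique iso $\alpha : A' \to A$ in $\A$ with $G(A') = C$; then $g \circ \sfE_{(\A, G)}(\alpha) : \sfE_{(\A, G)}(A') \to (C, M)$ projects to $1_C$ under $U^{\Str G}$, so by the discrete isofibration property of $U^{\Str G}$ (\ref{discrete_iso}) the unique iso lifting $1_C$ is the identity, forcing $\sfE_{(\A, G)}(A') = (C, M)$. Hence $\sfE_{(\A, G)}$ is bijective on objects and fully faithful, so an isomorphism in $\V\CAT/\C$, and $G$ is strictly $\J$-algebraic.

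Finally, to show that $\J$-algebraic implies (1)--(3), I would apply Proposition \ref{discrete_iso_prop} to obtain an equivalence $L : \A'' \xrightarrow{\sim} \A$ with $GL$ a discrete isofibration. Then $GL$ is $\J$-algebraic, so there is an equivalence $E : \A'' \xrightarrow{\sim} \scrT\Alg^!$ in $\V\CAT//\C$ with iso $\delta : U^\scrT E \cong GL$. Using the discrete isofibration property of $U^\scrT$ to lift each $\delta_A^{-1}$, I would produce a $\V$-functor $E^* : \A'' \to \scrT\Alg^!$ naturally isomorphic to $E$ and satisfying $U^\scrT E^* = GL$ strictly; the same discrete-isofibration-lifting argument as in the previous paragraph (now using that $GL$ is a discrete isofibration) then shows $E^*$ is bijective on objects, hence an isomorphism in $\V\CAT/\C$, so $GL$ is strictly $\J$-algebraic and by the forward direction satisfies (1)--(3). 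These conditions then transfer back to $G$ along the equivalence $L$: composing an adjoint pseudo-inverse of $L$ with the $j$-relative left adjoint yields one for $G$, and the resulting $\A_F$ is equivalent to the one for $GL$, preserving density of $j_F$ and the characterization of $j_F$-nerves up to isomorphism. The main obstacle will be this rectification step, namely the careful transport of $\V$-category structure along discrete isofibration lifts to convert a $\V\CAT//\C$-equivalence into a strict $\V\CAT/\C$-isomorphism; the idea is standard, but the $\V$-enriched bookkeeping requires attention.
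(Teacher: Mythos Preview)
Your proof is correct, but the route differs from the paper's in two notable ways.

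First, for the strictly $\J$-algebraic characterization the paper treats both directions at once: assuming (1) and (4) hold (without loss of generality), it observes that $G$ is strictly $\J$-algebraic iff the square \eqref{eq:algebraic_pb} is a pullback, and then invokes \cite[Lemma 14]{BourkeGarner} as a black box, which says that a commutative square with a fully faithful bottom and discrete isofibrations on both sides is a pullback iff the top is fully faithful and satisfies the appropriate essential-image condition. After identifying $\widetilde{m}_G \cong N_{j_F}$ via Proposition \ref{Str_prop}, these two conditions become exactly (2) and (3). You instead verify (1)--(4) directly for $U^\scrT$ in the forward direction and then, in the backward direction, prove by hand that $\sfE_{(\A,G)}$ is an equivalence and upgrade it to an isomorphism using discrete-isofibration lifts. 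Your hand-crafted ``upgrade'' argument is precisely the content of the Bourke--Garner lemma unwound; the paper's approach is shorter, while yours is more self-contained.

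Second, for the forward direction of the $\J$-algebraic case, the paper simply notes that if $G$ is $\J$-algebraic then it is equivalent in $\V\CAT//\C$ to some strictly $\J$-algebraic $G'$, and that conditions (1)--(3) are \emph{stable under equivalence in $\V\CAT//\C$}, so they transfer. You instead first replace $G$ by a discrete isofibration $GL$ via Proposition \ref{discrete_iso_prop}, then rectify the $\V\CAT//\C$-equivalence $\A'' \simeq \scrT\Alg^!$ to a strict isomorphism (again using discrete-isofibration lifts on both sides), conclude $GL$ is strictly $\J$-algebraic, and finally transfer (1)--(3) back along $L$. This works, but the rectification step you flag as the ``main obstacle'' is avoidable: the stability of (1)--(3) under equivalence in $\V\CAT//\C$ is straightforward once stated, and invoking it directly saves considerable bookkeeping.
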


\begin{proof}
We may assume without loss of generality that (1) holds, since this condition holds if $G$ is $\J$-algebraic, by \ref{thm:basic_charn_jalgcats}.  In particular, $G$ is $\J$-tractable by \ref{adjointtractable}, and we can form the following commutative square in $\V'\CAT$ as in \ref{unitmorphism}
\begin{equation}\label{eq:algebraic_pb}
\begin{tikzcd}
	{\A} && {[\Str G, \V]} \\
	\\
	\C && {\left[\J^\op, \V\right]}
	\arrow["{\widetilde{m}_G}", from=1-1, to=1-3]
	\arrow["{\left[\tau_G, 1\right]}", from=1-3, to=3-3]
	\arrow["{G}"', from=1-1, to=3-1]
	\arrow["{N_j}"', from=3-1, to=3-3]
\end{tikzcd}
\end{equation}
where $\widetilde{m}_G$ is the transpose of $m_G:\Str G \to [\A,\V]$. By Proposition \ref{Str_prop}, we may take $\Str G = \A_F^\op$, $\tau_G = i_F^\op$, and $m_G \cong \A(j_F-,?):\A_F^\op \to [\A,\V]$. Consequently, the top side $\widetilde{m}_G$ of \eqref{eq:algebraic_pb} is isomorphic to the nerve $\V'$-functor $N_{j_F} = \A(j_F?,-):\A \to \left[\A_F^\op, \V\right]$ for $j_F:\A_F \to \A$. Therefore (2) holds iff $\widetilde{m}_G$ is fully faithful. Also, the essential image of $\widetilde{m}_G$ coincides with that of $N_{j_F}$ and so consists of precisely the $j_F$-nerves.

Using these observations, we now establish the given characterization of strictly $\J$-algebraic $\V$-functors, for which we can also assume (4) without loss of generality, in view of \ref{discrete_iso}. By Proposition \ref{unitinvertible}, $G$ is strictly $\J$-algebraic iff the unit morphism $\sfE_{(\A, G)} : (\A, G) \to \Sem\Str G$ of the structure--semantics adjunction \eqref{strsemadjunction} is invertible, iff (by the definition of $\sfE_{(\A, G)}$ in Definition \ref{unitmorphism}) the square \eqref{eq:algebraic_pb} is a pullback in $\V'\CAT$. Since $G$ and $[\tau_G, 1]$ are discrete isofibrations \eqref{discrete_iso} and $N_j : \C \to \left[\J^\op, \V\right]$ is fully faithful, we deduce from \cite[Lemma 14]{BourkeGarner} that the commutative square \eqref{eq:algebraic_pb} is a pullback iff (a) $\widetilde{m}_G$ is fully faithful, and (b) a presheaf $X : \Str G \to \V$ is in the essential image of $\widetilde{m}_G$ iff $X \circ \tau_G : \J^\op \to \V$ is in the essential image of $N_j : \C \to \left[\J^\op, \V\right]$, i.e.~is a $j$-nerve. But (a) is equivalent to (2) as noted above, while (b) is equivalent to (3) since the essential image of $\widetilde{m}_G$ consists of the $j_F$-nerves. Thus the desired characterization of strictly $\J$-algebraic $\V$-functors is proved.

Lastly we prove the first assertion of the Theorem.  If $G$ is $\J$-algebraic, then there is a strictly $\J$-algebraic $\V$-category $(\A', G')$ over $\C$ such that $\A \simeq \A'$ in the pseudo-slice 2-category $\V\CAT//\C$, so that conditions (1)--(3) hold for $G$, since they hold for $G'$ and are stable under equivalence in $\V\CAT//\C$. Conversely, suppose that $G$ satisfies conditions (1)--(3). Within the square \eqref{eq:algebraic_pb}, the top side $\widetilde{m}_G$ is fully faithful since (2) holds, while the right side $[\tau_G,1]$ has a faithful underlying ordinary functor, since $\tau_G$ is the identity on objects, so the common composite in \eqref{eq:algebraic_pb} has a faithful underlying ordinary functor. This entails that the left side $G$ has a faithful underlying ordinary functor. Hence, by Proposition \ref{discrete_iso_prop} there is a $\V$-category $\A'$ equipped with a discrete isofibration $G' : \A' \to \C$ and an equivalence $L:\A' \xrightarrow{\sim} \A$ such that $GL \cong G'$, from which it follows by \cite[2.5]{EAT} that $L$ underlies an equivalence $\A' \simeq \A$ in $\V\CAT//\C$.  Hence $G'$ also satisfies conditions (1)--(3), in addition to (4), so that $G'$ is strictly $\J$-algebraic, whence $G$ is $\J$-algebraic.     
\end{proof}

\section{The monad--theory equivalence}
\label{mnd_th_section}

In this section, we fix an arbitrary subcategory of arities $j : \J \hookrightarrow \C$. 	Under the assumption that $\J$ is amenable, we shall establish in Theorem \ref{strsemcorestricted} that the idempotent structure--semantics adjunction of Theorem \ref{strsemadjunction} yields an idempotent adjunction between admissible $\J$-pretheories and $\V$-monads on $\C$. We shall then show in Theorem \ref{monadtheoryequivalence} that this adjunction restricts to an equivalence between $\J$-theories and \emph{$\J$-nervous} $\V$-monads on $\C$. 

\begin{para}
\label{monad_para}
We write $\Mnd(\C)$ for the (ordinary) category of $\V$-monads on $\C$. For a $\V$-monad $\T$ on $\C$, we regard the $\V$-category $\T\Alg$ of $\T$-algebras\footnote{Note that $\T\Alg$ is indeed a $\V$-category because $\V$ has equalizers.} as a $\V$-category over $\C$ by means of the forgetful $\V$-functor $U^\T : \T\Alg \to \C$. We then say that a $\V$-functor $G : \A \to \C$ is \emph{strictly monadic}, or that $\A$ is a \emph{strictly monadic $\V$-category over $\C$}, if there is some $\V$-monad $\T$ on $\C$ such that $\A \cong \T\Alg$ in $\V\CAT/\C$. We write $\MONADIC^!(\C)$ for the full subcategory of $\V\CAT/\C$ consisting of the strictly monadic $\V$-categories over $\C$. There is a $\V$-monad \emph{semantics} functor $\ALG : \Mnd(\C)^\op \to \V\CAT/\C$ given by $\T \mapsto \T\Alg$, and this functor is fully faithful by \cite[Pages 74--75]{Dubucbook}. Corestricting $\ALG$ to its essential image thus yields an equivalence $\Mnd(\C)^\op \xrightarrow{\sim} \MONADIC^!(\C)$. A pseudo-inverse to this equivalence is obtained by associating to each strictly monadic $\V$-category $(\A, G)$ over $\C$ the $\V$-monad induced by a choice of left adjoint to $G$, and then the Eilenberg-Moore comparison isomorphisms witness that this functor $\MONADIC^!(\C) \to \Mnd(\C)^\op$ is indeed pseudo-inverse to the equivalence $\Mnd(\C)^\op \xrightarrow{\sim} \MONADIC^!(\C)$.   
\end{para} 

By definition, a \textit{weight} is a $\V$-functor $W : \B^\op \to \V$, where $\B$ is a (possibly large) $\V$-category.  

\begin{defn}
\label{Jflat}
{
A weighted colimit in $\C$ is \textbf{$\J$-stable} (cf.~\cite[Definition 6.1]{EAT}) if it is preserved by each $\C(J, -) : \C \to \V$ ($J \in \ob\J$). A weight $W : \B^\op \to \V$ is \textbf{$\J$-flat} (cf.~\cite[Definition 6.2]{EAT}) if all $W$-weighted colimits that exist in $\C$ are $\J$-stable. A \textbf{$\J$-flat colimit} in a $\V$-category $\A$ is a weighted colimit $W * D$ in $\A$ whose weight $W$ is $\J$-flat.
}
\end{defn}

\noindent Thus the property of $\J$-stability is defined only for colimits in $\C$, but we may consider $\J$-flat colimits in $\V$-categories other than $\C$, as $\J$-flatness is a property of the weight.

\begin{para}
\label{relativelystable}
{
By a \emph{weighted diagram} in a $\V$-category $\A$ we mean a pair $(W, D)$ consisting of a weight $W : \B^\op \to \V$ and a $\V$-functor $D : \B \to \A$. We say that a weighted diagram $(W, D)$ in $\C$ is \textbf{$\J$-stable} if every colimit $W \ast D$ that $\C$ admits is $\J$-stable. Given a $\V$-functor $G : \A \to \C$, we say that a weighted diagram $(W, D)$ in $\A$ is \textbf{$G$-relatively $\J$-stable} (cf.~\cite[Definition 6.1]{EAT}) if the weighted diagram $(W, GD)$ in $\C$ is $\J$-stable. 

Given a class $\Lambda$ of weighted diagrams and a $\V$-functor $G : \A \to \C$, we say that $G$ \emph{creates $\Lambda$-colimits} if for every weighted diagram $(W, D) \in \Lambda$ in $\A$ and every colimit cylinder\footnote{An object $C$ equipped with a colimit cylinder $\lambda$ in the sense of \cite[\S 3.1]{Kelly}.} $(C, \lambda)$ for $(W, GD)$, there is a unique cylinder $\left(\overline{C}, \overline{\lambda}\right)$ for $(W, D)$ with $\left(G\overline{C}, G\overline{\lambda}\right) = (C, \lambda)$, and furthermore $\left(\overline{C}, \overline{\lambda}\right)$ is a colimit cylinder for $(W, D)$. We use these concepts in the following proposition:  
}
\end{para}

\begin{prop}
\label{createscolimits}
Let $\scrT$ be an admissible $\J$-pretheory. Then the $\V$-functor $U^\scrT : \scrT\Alg^! \to \C$ creates $U^\scrT$-relatively $\J$-stable colimits (even $\V'$-enriched such colimits), and in particular creates $\J$-flat colimits.  
\end{prop}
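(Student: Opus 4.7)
The plan is to leverage the defining pullback diagram \eqref{eqn:concrete_pb} of $\scrT\Alg^!$ in $\V'\CAT$, together with two key facts: the $\V'$-functor $[\tau, 1] : [\scrT, \V] \to \left[\J^\op, \V\right]$ is a discrete isofibration \eqref{discrete_iso} that preserves all $\V'$-enriched colimits (since they are computed pointwise), and $N_j : \C \hookrightarrow \left[\J^\op, \V\right]$ is fully faithful. The role of the $\J$-stability hypothesis on a colimit $W \ast U^\scrT D$ in $\C$ is that it amounts precisely to the statement that $N_j$ preserves this colimit, since $N_j(C)(J) = \C(jJ, C)$ and $\J$-stability is preservation by each $\C(jJ, -)$. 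It is this colimit preservation by $N_j$, in tandem with the automatic preservation by $[\tau, 1]$, that enables the pullback to detect a colimit upstairs.

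Concretely, given a $U^\scrT$-relatively $\J$-stable weighted diagram $(W, D)$ in $\scrT\Alg^!$ and a colimit cylinder $(A, \lambda)$ for $(W, U^\scrT D)$ in $\C$, I would first form the colimit $M = W \ast \sfM^\scrT D$ in $[\scrT, \V]$ with colimit cylinder $\nu$ (this exists because $\V'$ is cocomplete). The two preservation statements above produce a unique isomorphism $\gamma : [\tau, 1] M \xrightarrow{\sim} N_j A$ relating the two resulting colimit vertices for the common diagram $N_j U^\scrT D = [\tau, 1] \sfM^\scrT D$ in $\left[\J^\op, \V\right]$, satisfying $\gamma_* [\tau, 1] \nu = N_j \lambda$. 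Applying the discrete isofibration property of $[\tau, 1]$ to $\gamma^{-1}$ then yields a unique isomorphism $\bar{\gamma} : M' \xrightarrow{\sim} M$ with the strict equality $[\tau, 1] M' = N_j A$, so $(A, M')$ is a concrete $\scrT$-algebra. The pair consisting of $\lambda$ and the cylinder $\bar{\gamma}^{-1}_* \nu$ is then compatible under $N_j$ and $[\tau, 1]$ by construction, and so assembles via the pullback into a cylinder $\bar{\lambda}$ for $(W, D)$ in $\scrT\Alg^!$ with vertex $(A, M')$, strictly lifting $\lambda$ through $U^\scrT$.

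To verify that $\bar{\lambda}$ is a colimit cylinder, any competing cylinder in $\scrT\Alg^!$ with vertex $(B, N)$ yields unique mediating morphisms $\alpha : A \to B$ in $\C$ and $\beta : M' \to N$ in $[\scrT, \V]$ from the two component colimits, and the uniqueness clause for the colimit $N_j A$ in $\left[\J^\op, \V\right]$ forces $N_j \alpha = [\tau, 1] \beta$, so that $(\alpha, \beta)$ assembles into a morphism $(A, M') \to (B, N)$ in $\scrT\Alg^!$. For uniqueness of the lift, one uses that $[\tau, 1]$ is conservative (since $\tau$ is identity-on-objects, a natural transformation in $[\scrT, \V]$ and its image in $\left[\J^\op, \V\right]$ share the same indexing set of components) together with the uniqueness clause of the discrete isofibration property of $[\tau, 1]$. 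The main obstacle is precisely this conversion of the isomorphism $[\tau, 1] M \cong N_j A$ coming from colimit preservation into the strict equality $[\tau, 1] M' = N_j A$ required for membership in the pullback $\scrT\Alg^!$, which is handled exclusively by the discrete isofibration property. The argument applies verbatim to $\V'$-weighted colimits, and the conclusion for $\J$-flat colimits is immediate since every $\J$-flat weighted diagram in $\scrT\Alg^!$ is $U^\scrT$-relatively $\J$-stable by definition.
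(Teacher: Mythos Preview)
Your proof is correct and follows essentially the same approach as the paper's. The paper's proof is more compressed: it observes that $\J$-stability makes $N_j$ send the colimit to a \emph{pointwise} colimit in $[\J^\op,\V]$, notes that $[\tau,1]$ \emph{creates} pointwise colimits (since $\tau$ is identity-on-objects), and then invokes the pullback square together with the full faithfulness of $\sfM^\scrT$ (which reflects colimits) to conclude. Your argument unpacks exactly these steps, replacing the single phrase ``$[\tau,1]$ creates pointwise colimits'' with the explicit combination of colimit preservation plus the discrete isofibration correction, and spelling out how the pullback assembles the lifted cylinder and why it is a colimit. One minor imprecision: when you form $M = W \ast \sfM^\scrT D$ ``in $[\scrT,\V]$'' and justify it by cocompleteness of $\V'$, the colimit is a priori only in $[\scrT,\V']$; but as you implicitly use, the discrete isofibration correction against $N_jA$ forces $M'$ to be $\V$-valued, so this causes no trouble.
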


\begin{proof}
Let us write $U := U^\scrT$. Let $\left(W : \B^\op \to \V, D : \B \to \scrT\Alg^!\right)$ be a $U$-relatively $\J$-stable weighted diagram in $\scrT\Alg^!$, and let $W \ast UD$ be a colimit of $(W, UD)$ in $\C$ with colimit cylinder $\lambda : W \Rightarrow \C\left(UD-, W \ast UD\right)$. We must show that there is a unique cylinder $\lambdabar : W \Rightarrow \scrT\Alg^!\left(D-, A\right)$ in $\scrT\Alg^!$ with $U\lambdabar = \lambda$, and that $\lambdabar$ is a colimit cylinder for $(W, D)$. Since $(W, D)$ is $U$-relatively $\J$-stable, the weighted colimit $W \ast UD$ is $\J$-stable, and hence is preserved by each $\C(J, -) : \C \to \V$ ($J \in \ob\J$), so $N_j : \C \to \left[\J^\op, \V\right]$ sends the colimit $W \ast UD$ to a pointwise colimit in $\left[\J^\op, \V\right]$. Since $\tau : \J^\op \to \scrT$ is identity-on-objects, it follows that the $\V'$-functor $[\tau, 1] : [\scrT, \V] \to \left[\J^\op, \V\right]$ creates pointwise colimits\footnote{I.e.~given $\V'$-functors $W : \B^\op \to \V$ and $D : \B \to [\scrT, \V]$, any pointwise colimit cylinder $(W \ast [\tau, 1]D, \lambda)$ that exists in $\left[\J^\op, \V\right]$ lifts uniquely to a cylinder in $[\scrT, \V]$, and the latter cylinder is a pointwise colimit cylinder.}. Then because the square \eqref{eqn:concrete_pb} defining $\scrT\Alg^!$ is a pullback and the fully faithful $\sfM^\scrT : \scrT\Alg^! \to [\scrT, \V]$ reflects colimits, the desired conclusion readily follows.       
\end{proof}

\begin{prop}
\label{strictlymonadic}
Let $\scrT$ be an admissible $\J$-pretheory. Then the $\V$-functor $U^\scrT : \scrT\Alg^! \to \C$ is strictly monadic. 
\end{prop}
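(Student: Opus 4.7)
The plan is to combine the enriched Beck monadicity theorem with a strictification argument based on discrete isofibrations. By admissibility of $\scrT$, the $\V$-functor $U^\scrT$ has a left adjoint, inducing a $\V$-monad $\T$ on $\C$ together with the Eilenberg--Moore comparison $\V$-functor $K : \scrT\Alg^! \to \T\Alg$ satisfying $U^\T K = U^\scrT$. The task is to show that $K$ is an isomorphism of $\V$-categories over $\C$.

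To show first that $K$ is an equivalence, I would invoke the $\V$-enriched Beck monadicity theorem (Dubuc), which reduces the problem to verifying that $U^\scrT$ creates coequalizers of $U^\scrT$-contractible pairs, or more generally $U^\scrT$-absolute weighted colimits in the appropriate enriched sense. Any absolute colimit in $\C$ is preserved by every $\V$-functor out of $\C$, and in particular by each representable $\C(J, -) : \C \to \V$ with $J \in \ob\J$, so it is automatically $\J$-stable. Hence every $U^\scrT$-absolute weighted diagram in $\scrT\Alg^!$ is $U^\scrT$-relatively $\J$-stable, and Proposition \ref{createscolimits} supplies the required creation.

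To upgrade this equivalence to strict monadicity, the remaining task is to prove that $K$ is bijective on objects. Both $U^\scrT$ and $U^\T$ are discrete isofibrations (the former by \ref{discrete_iso}, the latter because every strictly monadic functor is, as recalled in \ref{discrete_iso}). A short diagram chase then upgrades essential surjectivity of $K$ to bijectivity on objects: given $(C, c) \in \T\Alg$, choose $A' \in \scrT\Alg^!$ with an isomorphism $\phi : KA' \xrightarrow{\sim} (C, c)$ in $\T\Alg$, lift $(U^\T\phi)^{-1} : C \xrightarrow{\sim} U^\scrT A'$ along the discrete isofibration $U^\scrT$ to obtain a unique isomorphism $\psi : A \xrightarrow{\sim} A'$ with $U^\scrT A = C$, and observe that $\phi \circ K\psi : KA \to (C,c)$ has $U^\T$-image $1_C$; the unique isomorphism-lifting property of $U^\T$ then forces $KA = (C, c)$ on the nose. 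Uniqueness of such $A$ follows dually, using full faithfulness of $K$ inherited from Beck's theorem together with the two discrete isofibration properties.

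The main obstacle I anticipate is merely technical: pinning down the correct enriched form of Beck's theorem under the weak standing hypotheses on $\V$ (which is not assumed cocomplete). Fortunately, the only colimits whose creation is needed are absolute ones, and the $\V'$-enriched creation statement in Proposition \ref{createscolimits} is more than sufficient to handle them, so no essential difficulty arises.
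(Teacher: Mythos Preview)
Your core argument is exactly the paper's: $U^\scrT$ has a left adjoint by admissibility, and it creates $U^\scrT$-contractible coequalizers by Proposition \ref{createscolimits} since such coequalizers are absolute and hence $\J$-stable; then one invokes Dubuc's enriched Beck theorem.

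However, your second paragraph is redundant. The precise form of the enriched Beck monadicity theorem (Dubuc, \emph{Kan Extensions in Enriched Category Theory}, Theorem II.2.1, which is what the paper cites) already delivers \emph{strict} monadicity---an isomorphism $K:\scrT\Alg^! \to \T\Alg$, not merely an equivalence---once one has verified that $U^\scrT$ \emph{creates} (in the strict sense) coequalizers of $U^\scrT$-split pairs. Proposition \ref{createscolimits} supplies exactly this strict creation. So your discrete-isofibration strictification argument, while correct, is not needed; you appear to be conflating the precise Beck theorem with its ``crude'' variant (which uses preservation and reflection rather than creation, and yields only an equivalence).
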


\begin{proof}
The $\V$-functor $U^\scrT : \scrT\Alg^! \to \C$ has a left adjoint by admissibility of $\scrT$, and it creates $U^\scrT$-contractible coequalizers by Proposition \ref{createscolimits}, since contractible coequalizers are absolute colimits, and hence are $\J$-stable. So $U^\scrT$ is strictly monadic by the enriched Beck monadicity theorem \cite[Theorem II.2.1]{Dubucbook}. 
\end{proof}

\begin{rmk}
\label{monadic_rmk}
{
It follows from Proposition \ref{strictlymonadic} that a $\J$-pretheory $\scrT$ is admissible iff the $\V'$-functor $U^\scrT : \scrT\Alg^! \to \C$ has a left adjoint (i.e.~the requirement in Definition \ref{tractablearities} that $\scrT\Alg^!$ be a $\V$-category may be omitted, since it follows from the existence of such a left adjoint). For if the $\V'$-functor $U^\scrT : \scrT\Alg^! \to \C$ has a left adjoint, then $U^\scrT$ will be strictly $\V'$-monadic by the $\V'$-enriched version of Proposition \ref{strictlymonadic}, and hence $\scrT\Alg^!$ will be a $\V$-category because $\C$ is a $\V$-category. 
}
\end{rmk}

\noindent In view of Proposition \ref{strictlymonadic} we can now co-restrict the structure--semantics adjunction \eqref{strsemadjunction} to obtain the following theorem. The idempotent adjunction \eqref{eq:preth_mnd_adj} below generalizes the idempotent adjunction established by Bourke and Garner in \cite[Theorems 6 and 20]{BourkeGarner}; note that our proof relies on structure--semantics methods, whereas the proof given by Bourke and Garner employs different techniques (which are only available in their locally presentable setting).   

\begin{theo}
\label{strsemcorestricted}
Let $\J \hookrightarrow \C$ be an amenable subcategory of arities. Then the idempotent structure--semantics adjunction \eqref{strsemadjunction} co-restricts to an idempotent adjunction
\begin{equation}\label{eq:preth_mndc_adj}
\begin{tikzcd}
	{\Preth_{\underJ}^\sfa(\C)^\op} &&& {} & {\MONADIC^!(\C)}.
	\arrow[""{name=0, anchor=center, inner sep=0}, "\Sem", shift left=3, from=1-1, to=1-5]
	\arrow[""{name=1, anchor=center, inner sep=0}, "\Str", shift left=3, from=1-5, to=1-1]
	\arrow["\dashv"{anchor=center, rotate=90}, draw=none, from=1, to=0]
\end{tikzcd}
\end{equation}
Since $\MONADIC^!(\C) \simeq \Mnd(\C)^\op$, we then obtain an idempotent adjunction
\begin{equation}\label{eq:preth_mnd_adj}
\begin{tikzcd}
	{\Preth_{\underJ}^\sfa(\C)} &&& {} & {\Mnd(\C)},
	\arrow[""{name=0, anchor=center, inner sep=0}, "\sfm", shift left=3, from=1-1, to=1-5]
	\arrow[""{name=1, anchor=center, inner sep=0}, "\sft", shift left=3, from=1-5, to=1-1]
	\arrow["\vdash"{anchor=center, rotate=90}, draw=none, from=1, to=0]
\end{tikzcd}
\end{equation}
where $\sfm$ is the composite $\Preth_{\underJ}^\sfa(\C) \xrightarrow{\Sem^\op} \MONADIC^!(\C)^\op \xrightarrow{\sim} \Mnd(\C)$ and $\sft$ is the composite $\Mnd(\C) \xrightarrow{\sim} \MONADIC^!(\C)^\op \xrightarrow{\Str^\op} \Preth_{\underJ}^\sfa(\C)$. \qed  
\end{theo}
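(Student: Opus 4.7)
The plan is to leverage the idempotent structure--semantics adjunction of Theorem \ref{strsemadjunction} together with Proposition \ref{strictlymonadic}, using a standard fact about co-restricting adjunctions to full subcategories containing the image of one of the adjoints.

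First, I would observe that $\MONADIC^!(\C)$ is a full subcategory of $\J\Tract(\C)$: by definition every strictly monadic $\V$-functor $G : \A \to \C$ has a left adjoint, hence is $\J$-tractable by Lemma \ref{adjointtractable}. Next, for each admissible $\J$-pretheory $\scrT$, Proposition \ref{strictlymonadic} tells us that $U^\scrT : \scrT\Alg^! \to \C$ is strictly monadic, so $\Sem \scrT = (\scrT\Alg^!,U^\scrT)$ lies in $\MONADIC^!(\C)$. Thus the semantics functor $\Sem : \Preth_{\underJ}^\sfa(\C)^\op \to \J\Tract(\C)$ of Theorem \ref{strsemadjunction} co-restricts to a functor $\Sem : \Preth_{\underJ}^\sfa(\C)^\op \to \MONADIC^!(\C)$.

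The next step is the purely formal observation that whenever a right adjoint $R : \B \to \A$ lands in a full subcategory $\B' \hookrightarrow \B$, the restriction $L|_{\B'}$ of its left adjoint to $\B'$ is left adjoint to the co-restricted $R : \A \to \B'$, and any idempotence of the original adjunction is inherited (since the triangle identities and the invertibility of the whiskered (co)unit are preserved under this co-restriction). Applying this to $\Str \dashv \Sem$ yields the idempotent adjunction \eqref{eq:preth_mndc_adj} with left adjoint obtained by restricting $\Str : \J\Tract(\C) \to \Preth_{\underJ}^\sfa(\C)^\op$ to $\MONADIC^!(\C) \hookrightarrow \J\Tract(\C)$.

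Finally, to obtain the adjunction \eqref{eq:preth_mnd_adj}, I would compose the adjunction \eqref{eq:preth_mndc_adj} with the equivalence $\MONADIC^!(\C) \simeq \Mnd(\C)^\op$ recalled in \ref{monad_para}, noting that composing an idempotent adjunction with an equivalence again yields an idempotent adjunction. Unwinding the definitions gives precisely the described formulas for $\sfm$ and $\sft$. No step here is a genuine obstacle; the only mild subtlety is making sure that the inclusion $\MONADIC^!(\C) \hookrightarrow \J\Tract(\C)$ is full (which is immediate since both are defined as full subcategories of $\V\CAT/\C$) so that the co-restriction truly produces an adjunction on the nose rather than just up to isomorphism.
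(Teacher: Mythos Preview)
Your proposal is correct and follows exactly the approach the paper intends: the theorem carries a \qed in the paper because it is meant to follow immediately from Proposition \ref{strictlymonadic} (which ensures $\Sem$ lands in $\MONADIC^!(\C)$) together with the formal co-restriction of the adjunction of Theorem \ref{strsemadjunction} and the equivalence $\MONADIC^!(\C) \simeq \Mnd(\C)^\op$ of \ref{monad_para}. Your write-up simply spells out the details the paper leaves implicit; the only cosmetic slip is in the labelling of categories in your general co-restriction fact (the full subcategory should live in the codomain of $R$, not its domain), but the intended statement and its application are clear and correct.
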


\begin{para}
\label{Kleisli_Jtheory}
{
Let $\J \hookrightarrow \C$ be an amenable subcategory of arities. Given an admissible $\J$-pretheory $\scrT$, the $\V$-monad $\sfm(\scrT)$ is the free concrete $\scrT$-algebra $\V$-monad, i.e.~the $\V$-monad on $\C$ induced by the adjunction $F^\scrT \dashv U^\scrT$. Conversely, given a $\V$-monad $\T$ on $\C$, $\sft(\T)$ is the $\J$-theory $\Str U^\T$, where $U^\T : \T\Alg \to \C$ is the forgetful $\V$-functor. Now $U^\T : \T\Alg \to \C$ has a $j$-relative left adjoint $F^\T j : \J \to \T\Alg$, which we can factor as an identity-on-objects $\V$-functor $i_\T$ followed by a fully faithful $\V$-functor $j_\T$ as in the diagram
\begin{equation}\label{eq:J_T}
\begin{tikzcd}
	{\J} && {\T\Alg}. \\
	& {\J_\T}
	\arrow["{Fj}", from=1-1, to=1-3]
	\arrow["{i_\T}"', from=1-1, to=2-2]
	\arrow["{j_\T}"', from=2-2, to=1-3]
\end{tikzcd}
\end{equation} 
By Proposition \ref{Str_prop}, we may take $\Str U^\T$ to be the $\J$-theory $\left(\J_\T^\op, i_\T^\op\right)$, which we may call the \textbf{Kleisli $\J$-theory of $\T$}. 
}
\end{para}

We know by Proposition \ref{unitinvertible} that a $\V$-monad $\T$ is fixed by the idempotent comonad determined by the adjunction $\sfm \dashv \sft$ of Theorem  \ref{strsemcorestricted} iff there is some admissible $\J$-pretheory $\scrT$ such that $\T$ is isomorphic to the free concrete $\scrT$-algebra $\V$-monad $\sfm(\scrT)$. We now show that these $\V$-monads admit an alternative characterization that is \emph{not} expressed in terms of pretheories. The following definition was given by Bourke and Garner \cite[Definition 17]{BourkeGarner} under more restrictive assumptions:

\begin{defn}
\label{Jnervous}
{
Let $j : \J \hookrightarrow \C$ be an amenable subcategory of arities. A $\V$-monad $\T$ on $\C$ is \textbf{$\J$-nervous} if the following conditions are satisfied:
\begin{enumerate}[leftmargin=*] 
\item The fully faithful $\V$-functor $j_\T : \J_\T \rightarrowtail \T\Alg$ of \ref{Kleisli_Jtheory} is dense. 
\item A presheaf $X : \J_\T^\op \to \V$ is a $j_\T$-nerve iff the composite presheaf $\J^\op \xrightarrow{\tau_\T} \J_\T^\op \xrightarrow{X} \V$ is a $j$-nerve.
\end{enumerate} 
We write $\Mnd_{\underJ}(\C)$ for the full subcategory of $\Mnd(\C)$ consisting of the $\J$-nervous $\V$-monads on $\C$. 
}
\end{defn} 

The following result generalizes the result \cite[Theorem 18]{BourkeGarner} of Bourke and Garner; while they give a direct proof in their setting, the proof that we give below employs our characterization theorem for strictly $\J$-algebraic $\V$-categories (Theorem \ref{J_alg_char_thm}).

\begin{prop}
\label{Jnervous_fixedpoint}
Let $j : \J \hookrightarrow \C$ be an amenable subcategory of arities, and let $\T$ be a $\V$-monad on $\C$. Then the following are equivalent: (1) $\T$ is $\J$-nervous; (2) $\T$ is fixed by the idempotent comonad $\sfm\sft$ determined by the adjunction $\sfm \dashv \sft$ of \eqref{eq:preth_mnd_adj}; (3) $\T\Alg$ is a strictly $\J$-algebraic $\V$-category over $\C$.
\end{prop}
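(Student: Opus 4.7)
The plan is to handle the three-way equivalence by separately establishing (2) $\Leftrightarrow$ (3) and (1) $\Leftrightarrow$ (3).

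For (2) $\Leftrightarrow$ (3), I would transport the idempotent comonad $\sfm\sft$ on $\Mnd(\C)$ across the equivalence $\Mnd(\C)^\op \simeq \MONADIC^!(\C)$ described in \ref{monad_para} to the idempotent comonad $\Sem\Str$ on $\MONADIC^!(\C)$ associated to the co-restricted adjunction \eqref{eq:preth_mndc_adj} of Theorem \ref{strsemcorestricted}. A $\V$-monad $\T$ is fixed by $\sfm\sft$ iff it lies in the essential image of $\sfm$, iff the corresponding object $(\T\Alg, U^\T)$ of $\MONADIC^!(\C)$ lies in the essential image of $\Sem$, iff the unit morphism $\sfE_{(\T\Alg, U^\T)} : (\T\Alg, U^\T) \to \Sem\Str U^\T$ of this co-restricted adjunction is invertible. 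By Proposition \ref{unitinvertible} applied to $G = U^\T$, this last condition is equivalent to $U^\T$ being strictly $\J$-algebraic, which is exactly condition (3).

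For (1) $\Leftrightarrow$ (3), I would invoke the intrinsic characterization of strictly $\J$-algebraic $\V$-functors given in Theorem \ref{J_alg_char_thm}, applied to $U^\T : \T\Alg \to \C$. Condition (1) of that theorem is automatic here, since the ordinary left adjoint $F^\T$ furnishes $F^\T j : \J \to \T\Alg$ as a $j$-relative left adjoint for $U^\T$ \eqref{J_rel_adj}; condition (4) is automatic as well, since strictly monadic $\V$-functors are discrete isofibrations \eqref{discrete_iso}. Writing $F := F^\T j$, the factorization \eqref{eq:A_F} of $F$ coincides definitionally with the factorization $\J \xrightarrow{i_\T} \J_\T \xrightarrow{j_\T} \T\Alg$ of \eqref{eq:J_T}, so that $\A_F = \J_\T$, $j_F = j_\T$, and $i_F^\op = \tau_\T$. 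With these identifications, conditions (2) and (3) of Theorem \ref{J_alg_char_thm} become verbatim conditions (1) and (2) of Definition \ref{Jnervous}, yielding (1) $\Leftrightarrow$ (3).

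The main ``obstacle'' is not deep but purely bookkeeping: one must carefully align the abstract factorization $\J \to \A_F \to \A$ appearing in the characterization theorem with the concrete Kleisli factorization $\J \to \J_\T \to \T\Alg$ from \ref{Kleisli_Jtheory}, and note that the identity-on-objects $\V$-functor $\tau_\T : \J^\op \to \J_\T^\op$ used in Definition \ref{Jnervous}(2) is precisely $i_\T^\op = i_F^\op$, so that the two nerve-transfer conditions coincide term-for-term rather than merely up to isomorphism.
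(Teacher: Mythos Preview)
Your proposal is correct and follows essentially the same route as the paper: both establish (2) $\Leftrightarrow$ (3) by reducing (via Theorem \ref{strsemcorestricted} and Proposition \ref{unitinvertible}) to invertibility of the unit $\sfE_{(\T\Alg,U^\T)}$, and both obtain (1) $\Leftrightarrow$ (3) from Theorem \ref{J_alg_char_thm} applied to $U^\T$ after identifying the abstract factorization with the Kleisli factorization of \ref{Kleisli_Jtheory}. One minor terminological slip: transporting the comonad $\sfm\sft$ across the \emph{contravariant} equivalence $\Mnd(\C)^\op \simeq \MONADIC^!(\C)$ yields the idempotent \emph{monad} $\Sem\Str$ on $\MONADIC^!(\C)$, not a comonad, but your argument via invertibility of the unit is unaffected.
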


\begin{proof}
In view of Theorem \ref{strsemcorestricted}, the counit morphism $\varepsilon_\T : \sfm(\sft(\T)) \to \T$ of the adjunction \eqref{eq:preth_mnd_adj} is invertible iff the unit morphism $\sfE_{\left(\T\Alg, U^\T\right)} : \left(\T\Alg, U^\T\right) \to \left(\J_\T^\op\Alg^!, U^{\J_\T^\op}\right)$ of the structure--semantics adjunction \eqref{strsemadjunction} is invertible, which is equivalent by Proposition \ref{unitinvertible} to $\left(\T\Alg, U^\T\right)$ being a strictly $\J$-algebraic $\V$-category over $\C$. Since $U^\T : \T\Alg \to \C$ is strictly monadic and thus a discrete isofibration \eqref{discrete_iso} and has the $j$-relative left adjoint $F^\T j : \J \to \T\Alg$, Theorem \ref{J_alg_char_thm} (in view of \ref{Kleisli_Jtheory}) then entails that $\left(\T\Alg, U^\T\right)$ is strictly $\J$-algebraic iff $\T$ is $\J$-nervous, as desired.  
\end{proof}

\noindent From Proposition  \ref{Jnervous_fixedpoint} we immediately deduce the following:

\begin{prop}
\label{Jnervous_equiv}
Let $\J \hookrightarrow \C$ be an amenable subcategory of arities. Then a $\V$-monad $\T$ on $\C$ is $\J$-nervous iff there is some admissible $\J$-pretheory $\scrT$ such that $\T$ is isomorphic to the free concrete $\scrT$-algebra $\V$-monad $\sfm(\scrT)$. \qed
\end{prop}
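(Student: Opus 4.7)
The plan is to derive this as an immediate corollary of Proposition \ref{Jnervous_fixedpoint}, combined with the standard categorical fact that for any idempotent adjunction $L \dashv R$, an object $Y$ in the codomain category is fixed by the induced comonad $LR$ (i.e.~the counit $LRY \to Y$ is invertible) if and only if $Y$ lies in the essential image of $L$. This fact is applicable because the adjunction $\sfm \dashv \sft$ of \eqref{eq:preth_mnd_adj} is idempotent by Theorem \ref{strsemcorestricted}.

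For the forward direction, I would assume that $\T$ is $\J$-nervous. By Proposition \ref{Jnervous_fixedpoint}, $\T$ is then fixed by the comonad $\sfm\sft$, so the counit $\varepsilon_\T : \sfm(\sft(\T)) \to \T$ is invertible. Taking the admissible $\J$-pretheory $\scrT := \sft(\T)$ then yields an isomorphism $\T \cong \sfm(\scrT)$, as required.

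For the backward direction, I would assume that $\T \cong \sfm(\scrT)$ for some admissible $\J$-pretheory $\scrT$, so that $\T$ lies in the essential image of $\sfm$. By the aforementioned standard fact about idempotent adjunctions, $\T$ is then fixed by the comonad $\sfm\sft$, and so Proposition \ref{Jnervous_fixedpoint} yields that $\T$ is $\J$-nervous.

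Since the proof amounts to packaging Proposition \ref{Jnervous_fixedpoint} together with a well-known property of idempotent adjunctions, no significant obstacle is anticipated; the only minor care needed is to note that $\sft(\T)$ is automatically an admissible $\J$-pretheory (in fact a $\J$-theory), so the witness $\scrT$ in the forward direction is available without any further argument.
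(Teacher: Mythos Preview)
Your proposal is correct and follows essentially the same approach as the paper, which marks the proposition with \qed as an immediate consequence of Proposition \ref{Jnervous_fixedpoint}. Indeed, the paper had already explicitly remarked (in the paragraph preceding Proposition \ref{Jnervous_fixedpoint}) that being fixed by the comonad $\sfm\sft$ is equivalent to lying in the essential image of $\sfm$, so your invocation of the standard idempotent-adjunction fact matches the paper's reasoning exactly.
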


\noindent We now obtain the following useful property of $\J$-nervous $\V$-monads. Given a class $\Lambda$ of weighted diagrams and a $\V$-functor $G : \A \to \C$, we say that $G$ \emph{conditionally preserves $\Lambda$-colimits} if for each $(W, D) \in \Lambda$ in $\A$ with a colimit $W \ast D$, if $W \ast GD$ exists then $G$ preserves the colimit $W \ast D$ (see \cite[2.3]{EAT}). Note that if $G$ creates $\Lambda$-colimits, then $G$ conditionally preserves them (while in general $G$ need not preserve them, for lack of existence in $\C$). 

\begin{prop}
\label{JnervousJflat}
Let $\J \hookrightarrow \C$ be an amenable subcategory of arities, and let $\T = (T, \eta, \mu)$ be a $\J$-nervous $\V$-monad on $\C$. Then $T : \C \to \C$ conditionally preserves $\J$-flat colimits. 
\end{prop}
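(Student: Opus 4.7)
The plan is to reduce to the case of a free algebra monad and then combine creation of colimits (Proposition \ref{createscolimits}) with the preservation of colimits by a left adjoint. By Proposition \ref{Jnervous_equiv}, there exists an admissible $\J$-pretheory $\scrT$ and an isomorphism $\T \cong \sfm(\scrT)$ of $\V$-monads on $\C$. Since preservation of a colimit is invariant under isomorphism of monads, it suffices to treat $\T = \sfm(\scrT)$, so $T = U^\scrT F^\scrT$ where $F^\scrT \dashv U^\scrT : \scrT\Alg^! \to \C$.

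Now fix a $\J$-flat weight $W : \B^\op \to \V$ and a $\V$-functor $D : \B \to \C$ such that the colimit $W \ast D$ exists in $\C$, and suppose moreover that $W \ast TD$ exists in $\C$. I would first observe that the hypothesis on $W \ast TD$ combined with $\J$-flatness of $W$ (Definition \ref{Jflat}) shows that this colimit is $\J$-stable. Regarding $(W, F^\scrT D)$ as a weighted diagram in $\scrT\Alg^!$, it is therefore $U^\scrT$-relatively $\J$-stable in the sense of \ref{relativelystable}, since $U^\scrT F^\scrT D = TD$. By Proposition \ref{createscolimits}, the $\V$-functor $U^\scrT$ creates this colimit; in particular, there exists a colimit cylinder for $(W, F^\scrT D)$ in $\scrT\Alg^!$ whose image under $U^\scrT$ is a colimit cylinder for $(W, TD)$ in $\C$.

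On the other hand, $F^\scrT : \C \to \scrT\Alg^!$ is a left adjoint, so it preserves the colimit $W \ast D$, giving a colimit cylinder for $(W, F^\scrT D)$ in $\scrT\Alg^!$ with vertex $F^\scrT(W \ast D)$. Comparing this colimit cylinder with the one obtained by creation, we deduce an isomorphism $F^\scrT(W \ast D) \cong W \ast F^\scrT D$ in $\scrT\Alg^!$, and applying $U^\scrT$ yields $T(W \ast D) = U^\scrT F^\scrT(W \ast D) \cong W \ast TD$, with the resulting cylinder precisely the colimit cylinder for $(W, TD)$ coming from creation. This is exactly the statement that $T$ preserves the colimit $W \ast D$.

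The only potentially subtle point is matching the colimit cylinders (not merely the vertices), but this is automatic from the uniqueness clause in creation \eqref{relativelystable} once we observe that the image under $U^\scrT$ of the $F^\scrT$-cylinder is, by the triangle identities, a cylinder for $(W, TD)$ with vertex $T(W \ast D)$, so it must coincide (via the canonical comparison) with the created colimit cylinder. I do not anticipate a serious obstacle; the entire argument is a formal combination of Propositions \ref{Jnervous_equiv} and \ref{createscolimits} together with the basic fact that left adjoints preserve colimits.
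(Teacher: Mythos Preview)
Your proof is correct and follows essentially the same approach as the paper's own proof, which simply invokes Proposition~\ref{Jnervous_equiv} to write $T \cong U^\scrT F^\scrT$ and then appeals to Proposition~\ref{createscolimits} (that $U^\scrT$ creates $\J$-flat colimits) together with the fact that the left adjoint $F^\scrT$ preserves all colimits. Your version spells out the details of how these ingredients combine to yield conditional preservation, including the cylinder-matching argument, which the paper leaves implicit.
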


\begin{proof}
By Proposition \ref{Jnervous_equiv}, there is an admissible $\J$-pretheory $\scrT$ such that $T \cong U^\scrT F^\scrT$. The result now follows because $U^\scrT$ creates $\J$-flat colimits by Proposition \ref{createscolimits} and the left adjoint $F^\scrT$ preserves all colimits. 
\end{proof}

\begin{theo}
\label{monadtheoryequivalence}
Let $\J \hookrightarrow \C$ be an amenable subcategory of arities. Then the adjunction $\sfm \dashv \sft$ of \eqref{eq:preth_mnd_adj} restricts to an equivalence
\[\begin{tikzcd}
	{\Th_{\underJ}(\C)} &&& {} & {\Mnd_{\underJ}(\C)}
	\arrow[""{name=0, anchor=center, inner sep=0}, "\sfm", shift left=3, from=1-1, to=1-5]
	\arrow[""{name=1, anchor=center, inner sep=0}, "\sft", shift left=3, from=1-5, to=1-1]
	\arrow["\sim"{anchor=center}, draw=none, from=1, to=0]
\end{tikzcd} \]
between $\J$-theories and $\J$-nervous $\V$-monads on $\C$. Furthermore:
\begin{enumerate}[leftmargin=*]
\item The inclusion $\Mnd_{\underJ}(\C) \hookrightarrow \Mnd(\C)$ is coreflective, with coreflector $\sfm\sft : \Mnd(\C) \to \Mnd_{\underJ}(\C)$. 
\item For every $\J$-nervous $\V$-monad $\T$ we have $\sft(\T)\Alg^! = \J_\T^\op\Alg^! \cong \T\Alg$ in $\V\CAT/\C$, naturally in $\T \in \Mnd_{\underJ}(\C)$. 
\item For every admissible $\J$-pretheory $\scrT$ (and in particular, every $\J$-theory $\scrT$) we have $\sfm(\scrT)\Alg \cong \scrT\Alg^!$ in $\V\CAT/\C$, naturally in $\scrT \in \Preth_{\underJ}^\sfa(\C)$. 
\item For every admissible $\J$-pretheory $\scrT$ we have $\scrT\Alg^! \cong (\sft\sfm\scrT)\Alg^! \cong \left(\Str\Sem\scrT\right)\Alg^!$ in $\V\CAT/\C$, naturally in $\scrT \in \Preth_{\underJ}^\sfa(\C)$, where $\sft\sfm\,\scrT \cong \Str\Sem\scrT$ is the $\J$-theory reflection of $\scrT$ \eqref{theory_Jalg_equiv}.
\end{enumerate}   
\end{theo}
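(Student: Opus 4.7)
The plan is to derive this as a formal consequence of the idempotence of the adjunction $\sfm \dashv \sft$ established in Theorem~\ref{strsemcorestricted}, together with the fixed-point characterizations already in hand in Propositions~\ref{counitinvertible} and \ref{Jnervous_fixedpoint}. Any idempotent adjunction $L \dashv R$ restricts to an equivalence between its full subcategories of fixed objects, which are coreflective (resp.~reflective) in the ambient categories via the idempotent comonad $LR$ (resp.~the idempotent monad $RL$). Applied to $\sfm \dashv \sft$, Proposition~\ref{Jnervous_fixedpoint} identifies the $\Mnd(\C)$-side fixed objects (those $\T$ with $\varepsilon_\T : \sfm\sft\T \to \T$ invertible) as exactly $\Mnd_{\underJ}(\C)$, which immediately yields claim~(1). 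For the $\Preth_{\underJ}^\sfa(\C)$-side fixed objects---those $\scrT$ with $\eta_\scrT : \scrT \to \sft\sfm\scrT$ invertible---I would unwind the construction of $\sfm \dashv \sft$ through the equivalence $\MONADIC^!(\C) \simeq \Mnd(\C)^\op$ to see that $\sft\sfm\scrT \cong \Str\Sem\scrT$ and that $\eta_\scrT$ corresponds to the counit $\varepsilon_\scrT : \scrT \to \Str\Sem\scrT$ of the structure--semantics adjunction \eqref{strsemadjunction}; Proposition~\ref{counitinvertible} then identifies these fixed objects with $\Th_{\underJ}(\C)$, completing the main equivalence.

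Claim~(3) is essentially built into the construction of $\sfm$. Since $\sfm$ factors as $\Preth_{\underJ}^\sfa(\C) \xrightarrow{\Sem^\op} \MONADIC^!(\C)^\op \xrightarrow{\sim} \Mnd(\C)$, and the rightmost equivalence sends each strictly monadic $\V$-category to its induced monad while its pseudo-inverse sends a monad to its Eilenberg--Moore category over $\C$, the Eilenberg--Moore comparison isomorphism yields $\sfm(\scrT)\Alg \cong \scrT\Alg^!$ in $\V\CAT/\C$, with naturality in $\scrT$ inherited from the functoriality of all the constructions in sight.

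Claims~(2) and (4) then follow by combining (3) with invertibility statements inherent in any idempotent adjunction. For (2), if $\T$ is $\J$-nervous, then $\varepsilon_\T : \sfm\sft\T \to \T$ is invertible by Proposition~\ref{Jnervous_fixedpoint}, so applying the semantics functor $\ALG : \Mnd(\C)^\op \to \V\CAT/\C$ of \ref{monad_para} and using (3) on the admissible $\J$-pretheory $\sft(\T)$ gives $\T\Alg \cong \sfm\sft(\T)\Alg \cong \sft(\T)\Alg^!$, naturally in $\T \in \Mnd_{\underJ}(\C)$. For (4), idempotence of $\sfm \dashv \sft$ ensures that the whiskered unit $\sfm\eta_\scrT : \sfm\scrT \to \sfm\sft\sfm\scrT$ is invertible for every admissible pretheory $\scrT$, whence (3) yields $\scrT\Alg^! \cong \sfm\scrT\Alg \cong \sfm\sft\sfm\scrT\Alg \cong (\sft\sfm\scrT)\Alg^!$ naturally in $\scrT$; the description of $\sft\sfm\scrT$ as the $\J$-theory reflection of $\scrT$ is already part of Theorem~\ref{theory_Jalg_equiv}. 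The only step that demands real care is the identification in the first paragraph of the $\Preth_{\underJ}^\sfa(\C)$-side fixed objects with $\J$-theories, which requires careful bookkeeping of the opposites introduced by the equivalence $\MONADIC^!(\C) \simeq \Mnd(\C)^\op$ and of the relation between $\eta_\scrT$ and $\varepsilon_\scrT$; once this is in place, the remainder of the argument is essentially a mechanical unwinding of idempotent-adjunction generalities.
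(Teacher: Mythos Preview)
Your proposal is correct and follows essentially the same approach as the paper: the equivalence is obtained from the idempotent adjunction $\sfm \dashv \sft$ by invoking Propositions~\ref{counitinvertible} and~\ref{Jnervous_fixedpoint} to identify the fixed objects on each side, (1) and (3) are argued exactly as the paper does, and the identification $\sft\sfm \cong \Str\Sem$ (hence $\eta_\scrT \leftrightarrow \varepsilon_\scrT$) that you flag as needing care is precisely what underlies the paper's one-line appeal to those two propositions. The only minor difference is in the routing of (2) and (4): the paper obtains (2) directly from \ref{Kleisli_Jtheory} and Proposition~\ref{Jnervous_fixedpoint} and then derives (4) by applying (2) to the $\J$-nervous monad $\sfm(\scrT)$, whereas you derive both from (3) together with invertibility of the (whiskered) unit/counit---an equivalent unwinding of the same idempotent-adjunction formalism.
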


\begin{proof}
The adjunction restricts to an equivalence by Propositions \ref{idempotent_adjunction}(2) and \ref{Jnervous_fixedpoint}, while (1) follows from Proposition \ref{Jnervous_fixedpoint} and the idempotence of the adjunction \eqref{eq:preth_mnd_adj}, and (2) follows immediately from \ref{Kleisli_Jtheory} and Proposition \ref{Jnervous_fixedpoint}. Given an admissible $\J$-pretheory $\scrT$ (in particular, a $\J$-theory $\scrT$) and its $\J$-theory reflection $\Str\Sem\scrT$, we have $\scrT\Alg^! \cong \sfm(\scrT)\Alg$ by the definition of $\sfm(\scrT)$ and because $U^\scrT : \scrT\Alg^! \to \C$ is strictly monadic \eqref{strictlymonadic}, and these isomorphisms are natural in $\scrT \in \Preth_{\underJ}^\sfa(\C)$ (by the naturality of the Eilenberg-Moore comparison isomorphisms, \ref{monad_para}). This proves (3). Since the $\V$-monad $\sfm(\scrT)$ is $\J$-nervous \eqref{Jnervous_equiv} we deduce from (2) that $\scrT\Alg^! \cong \sfm(\scrT)\Alg \cong \sft(\sfm(\scrT))\Alg^! \cong \left(\Str\Sem\scrT\right)\Alg^!$ in $\V\CAT/\C$, yielding (4).    
\end{proof}

\noindent The following theorem now follows immediately from Theorems \ref{J_alg_char_thm} and \ref{monadtheoryequivalence}:

\begin{theo}
\label{Jnervous_thm}
Let $\J \hookrightarrow \C$ be an amenable subcategory of arities, and let $G : \A \to \C$ be a $\V$-functor. Then the following are equivalent: (1) $G$ is strictly $\J$-algebraic; (2) $G$ is strictly monadic and the associated $\V$-monad on $\C$ is $\J$-nervous; (3) $G$ satisfies conditions (1)--(4) of Theorem \ref{J_alg_char_thm}. \qed
\end{theo}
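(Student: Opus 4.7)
The plan is to show the cycle $(1) \Leftrightarrow (3)$ and $(1) \Leftrightarrow (2)$, assembling the two cited theorems together with the bookkeeping already available from earlier in the section.

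First, the equivalence $(1) \Leftrightarrow (3)$ is literally the content of the second assertion of Theorem \ref{J_alg_char_thm}: that theorem characterizes strictly $\J$-algebraic $\V$-functors as precisely those $G$ satisfying conditions (1)--(4) of its statement. So no further argument is needed for this half.

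Next, I would establish $(1) \Rightarrow (2)$. Suppose $G : \A \to \C$ is strictly $\J$-algebraic, so there is an admissible $\J$-pretheory $\scrT$ with $(\A,G) \cong (\scrT\Alg^!, U^\scrT)$ in $\V\CAT/\C$. By Proposition \ref{strictlymonadic}, $U^\scrT : \scrT\Alg^! \to \C$ is strictly monadic, hence $G$ is too, and the $\V$-monad that $G$ induces is isomorphic to the free concrete $\scrT$-algebra monad $\sfm(\scrT)$. By Proposition \ref{Jnervous_equiv}, $\sfm(\scrT)$ is $\J$-nervous, yielding (2).

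For $(2) \Rightarrow (1)$: if $G$ is strictly monadic via a $\J$-nervous monad $\T$, then $(\A,G) \cong (\T\Alg, U^\T)$ in $\V\CAT/\C$; Proposition \ref{Jnervous_fixedpoint} then tells us that $(\T\Alg, U^\T)$ is strictly $\J$-algebraic, whence so is $G$. There is no real obstacle here --- the substantive work has already been done in Theorem \ref{J_alg_char_thm}, Proposition \ref{strictlymonadic}, Proposition \ref{Jnervous_fixedpoint}, and Proposition \ref{Jnervous_equiv}; the proof is a short assembly of these facts.
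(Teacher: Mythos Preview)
Your proof is correct and follows essentially the same route as the paper's. The paper's proof is the one-line remark that the result follows immediately from Theorems \ref{J_alg_char_thm} and \ref{monadtheoryequivalence}; your argument simply unpacks the latter citation into its constituent ingredients (Propositions \ref{strictlymonadic}, \ref{Jnervous_equiv}, and \ref{Jnervous_fixedpoint}), which is entirely acceptable and arguably clearer.
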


\noindent We write $\ALG : \Mnd_{\underJ}(\C)^\op \to \V\CAT/\C$ for the restriction of $\ALG : \Mnd(\C)^\op \to \V\CAT/\C$. For a strongly amenable subcategory of arities $\J \hookrightarrow \C$, we write $\ALG^! : \Preth_{\underJ}(\C)^\op \to \V\CAT/\C$ for the composite $\Preth_{\underJ}(\C)^\op \xrightarrow{\Sem} \J\Tract(\C) \hookrightarrow \V\CAT/\C$.  By \ref{idempotent_adjunction}(3), the restriction of $\ALG^!$ to $\Th_{\underJ}(\C)^\op$ is a fully faithful functor that we denote simply by $\ALG^! : \Th_{\underJ}(\C)^\op \to \V\CAT/\C$. The following result generalizes certain results that Bourke and Garner proved in their locally presentable context (see \cite[\S 5.4 and Proposition 31]{BourkeGarner}). 

\begin{prop}
\label{cocompletecor}
Suppose that $\V$ is complete and cocomplete and that $\C$ is complete, and let $\J \hookrightarrow \C$ be a small and strongly amenable subcategory of arities. Then $\Preth_{\underJ}(\C)$, $\Th_{\underJ}(\C)$, and $\Mnd_{\underJ}(\C)$ are cocomplete. Moreover, small colimits in these categories are algebraic, i.e.~are sent to limits by the functors $\ALG^! : \Preth_{\underJ}(\C)^\op \to \V\CAT/\C$, $\ALG^! : \Th_{\underJ}(\C)^\op \to \V\CAT/\C$, and $\ALG : \Mnd_{\underJ}(\C)^\op \to \V\CAT/\C$. 
\end{prop}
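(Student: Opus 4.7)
The plan is to establish cocompleteness of $\Preth_{\underJ}(\C)$ first, then deduce the cocompleteness of $\Th_{\underJ}(\C)$ and $\Mnd_{\underJ}(\C)$ from the reflections and equivalences already established in the paper. In each case, the algebraic property will follow from the fact that $\Sem$ is a right adjoint, together with Proposition \ref{JTract_lims}.

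For the cocompleteness of $\Preth_{\underJ}(\C)$, I would invoke that since $\J$ is small, the set $\ob\J$ is small, so the category $\V\text{-}\mathsf{Cat}(\ob\J)$ of $\V$-categories with object set $\ob\J$ and identity-on-objects $\V$-functors between them is cocomplete: it is monadic over the functor category $[\ob\J \times \ob\J, \V]$ of $\V$-graphs on $\ob\J$ (which is cocomplete because $\V$ is), and since $\V$ is closed monoidal, the free-$\V$-category monad preserves filtered colimits and reflexive coequalizers, so its category of algebras inherits cocompleteness. Since $\Preth_{\underJ}(\C)$ is isomorphic to the coslice $\J^\op / \V\text{-}\mathsf{Cat}(\ob\J)$, it too is cocomplete, with colimits created by the underlying forgetful functor.

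For the algebraic property of small colimits in $\Preth_{\underJ}(\C)$, strong amenability yields $\Preth_{\underJ}(\C) = \Preth_{\underJ}^\sfa(\C)$, and Theorem \ref{strsemadjunction} then gives $\Sem : \Preth_{\underJ}(\C)^\op \to \J\Tract(\C)$ as a right adjoint. Being a right adjoint, $\Sem$ sends colimits in $\Preth_{\underJ}(\C)$ to limits in $\J\Tract(\C)$, and the inclusion $\J\Tract(\C) \hookrightarrow \V\CAT/\C$ preserves small existing limits by Proposition \ref{JTract_lims}, so $\ALG^! : \Preth_{\underJ}(\C)^\op \to \V\CAT/\C$ sends small colimits to limits. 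For $\Th_{\underJ}(\C)$, the reflection $\Th_{\underJ}(\C) \hookrightarrow \Preth_{\underJ}(\C)$ (Theorem \ref{theory_Jalg_equiv}, applicable here since strong amenability makes all pretheories admissible) gives cocompleteness, and $\Sem : \Th_{\underJ}(\C)^\op \to \J\Tract(\C)$ is again a right adjoint by Corollary \ref{fff_semantics_cor}, so the same argument yields the algebraic property. Finally, Theorem \ref{monadtheoryequivalence} (in particular parts (2) and (3)) gives an equivalence $\Th_{\underJ}(\C) \simeq \Mnd_{\underJ}(\C)$ under which $\ALG^!$ and $\ALG$ correspond, transferring the conclusions from $\Th_{\underJ}(\C)$ to $\Mnd_{\underJ}(\C)$.

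The main obstacle will be justifying the cocompleteness of $\V\text{-}\mathsf{Cat}(\ob\J)$, i.e., verifying that the free-$\V$-category monad on $\V$-graphs is well-behaved enough (preserves reflexive coequalizers and filtered colimits) for standard monadicity-plus-cocompleteness arguments to apply; this is a standard but nontrivial enriched-categorical fact. Once this is in hand, the rest of the proof is a formal consequence of the adjunctions and equivalences already developed in the paper, combined with Proposition \ref{JTract_lims}.
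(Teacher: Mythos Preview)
Your proposal is correct, and for $\Th_{\underJ}(\C)$ and $\Mnd_{\underJ}(\C)$ it matches the paper's argument essentially verbatim. The two genuine differences are in the treatment of $\Preth_{\underJ}(\C)$.

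For cocompleteness of $\Preth_{\underJ}(\C)$, the paper works one level up: it invokes Wolff's result that $\V\Cat$ is cocomplete, forms the coslice $\J^\op/\V\Cat$, and then observes that $\Preth_{\underJ}(\C)$ sits inside as the bijective-on-objects objects, which is coreflective because bijective-on-objects $\V$-functors are the left class of a factorization system. Your route via $\V\text{-}\mathsf{Cat}(\ob\J)$ is more direct in that no coreflection step is needed (the coslice is already $\Preth_{\underJ}(\C)$ on the nose), but you trade this for having to argue cocompleteness of monoids in the monoidal category of $\V$-graphs on $\ob\J$; as you note, this is standard (the graph tensor preserves colimits in each variable since $\otimes$ in $\V$ does), though it is arguably more work to spell out than citing Wolff plus a one-line factorization-system argument.

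For the algebraic property of colimits in $\Preth_{\underJ}(\C)$, your argument is actually cleaner than the paper's: you use directly that $\Sem : \Preth_{\underJ}(\C)^\op \to \J\Tract(\C)$ is a right adjoint (Theorem \ref{strsemadjunction}, with strong amenability giving $\Preth_{\underJ}^\sfa(\C) = \Preth_{\underJ}(\C)$) and then apply Proposition \ref{JTract_lims}. The paper instead factors $\ALG^!$ on pretheories through $\Th_{\underJ}(\C)^\op$ via $\Str\Sem$ (using Theorem \ref{monadtheoryequivalence}(4)), which works but is a detour.
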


\begin{proof}
If $\Preth_{\underJ}(\C)$ is cocomplete, then since its full subcategory $\Th_{\underJ}(\C)$ is reflective by Proposition \ref{theory_Jalg_equiv}, $\Th_{\underJ}(\C)$ will also be cocomplete, and thus $\Mnd_{\underJ}(\C) \simeq \Th_{\underJ}(\C)$ will be cocomplete. Since $\V$ is cocomplete, so is the category $\V\Cat$ of small $\V$-categories (by \cite[Corollary 2.14]{WolffVcat}), and hence so is the co-slice category $\J^\op/\V\Cat$ (by, e.g.,~\cite[Proposition 2.16.3]{Borceux1}). Now $\Preth_{\underJ}(\C)$ is equivalent to the full subcategory of $\J^\op/\V\Cat$ consisting the $\V$-functors that are bijective-on-objects, and the bijective-on-objects $\V$-functors form the left class of a factorization system on $\V\Cat$, so that $\Preth_{\underJ}(\C)$ is coreflective in the cocomplete category $\J^\op/\V\Cat$ by (e.g.) \cite[2.12]{Carboni_localization}, and hence $\Preth_{\underJ}(\C)$ is cocomplete.

The functor $\ALG^! : \Th_{\underJ}(\C)^\op \to \V\CAT/\C$ preserves small limits, as it is the composite $\Th_{\underJ}(\C)^\op \xrightarrow{\Sem} \J\Tract(\C) \hookrightarrow \V\CAT/\C$, whose first factor preserves limits as a right adjoint (see Theorem \ref{strsemadjunction} and Theorem \ref{theory_Jalg_equiv}), and whose second factor preserves small limits by Proposition \ref{JTract_lims}. In view of Theorem \ref{monadtheoryequivalence}, the functor $\ALG^! : \Preth_{\underJ}(\C)^\op \to \V\CAT/\C$ is isomorphic to the composite $\Preth_{\underJ}(\C)^\op \xrightarrow{\Str\Sem} \Th_{\underJ}(\C)^\op \xrightarrow{\ALG^!} \V\CAT/\C$. The first factor is a right adjoint (being the opposite of a left adjoint, \ref{theory_Jalg_equiv}) and hence preserves limits, while the second factor preserves small limits, as already shown. Finally, the functor $\ALG : \Mnd_{\underJ}(\C)^\op \to \V\CAT/\C$ is isomorphic to the composite $\Mnd_{\underJ}(\C)^\op \xrightarrow{\sim} \Th_{\underJ}(\C)^\op \xrightarrow{\ALG^!} \V\CAT/\C$ by Theorem \ref{monadtheoryequivalence}, where the second factor preserves small limits (as already shown) and the first clearly does (as an equivalence).  
\end{proof}

\noindent We conclude \S\ref{mnd_th_section} by showing that any small subcategory of arities that is contained in some strongly amenable subcategory of arities is itself strongly amenable (Theorem \ref{pushoutthm} below). 

\begin{para}
\label{pushoutfunctor}
{
Suppose that $\V$ is cocomplete. Let $k : \scrK \hookrightarrow \C$ be a subcategory of arities that is contained in some small subcategory of arities $j : \J \hookrightarrow \C$, and let $i : \scrK \hookrightarrow \J$ be the inclusion. As mentioned in Definition  \ref{Jtheory}, the category $\Preth_{\scrK}(\C)$ of $\scrK$-pretheories is the full subcategory of $\scrK^\op/\V\Cat$ consisting of the identity-on-objects $\V$-functors, and similarly for $\Preth_{\underJ}(\C)$. We have a functor $i^\ast : \J^\op/\V\Cat \to \scrK^\op/\V\Cat$ given by precomposition with $i^\op : \scrK^\op \to \J^\op$, and $i^\ast$ is readily seen to have a left adjoint given by pushout along $i^\op : \scrK^\op \to \J^\op$ (since $\V\Cat$ is cocomplete by \cite[Corollary 2.14]{WolffVcat}). If $\scrT$ is a $\scrK$-pretheory, then the pushout of $\tau : \scrK^\op \to \scrT$ along $i^\op : \scrK^\op \to \J^\op$ can be chosen to be identity-on-objects, since bijective-on-objects $\V$-functors are stable under pushout, as they constitute the left class of a factorization system on $\V\Cat$. So the left adjoint of $i^\ast$ restricts to a functor $i_\ast : \Preth_\scrK(\C) \to \Preth_{\underJ}(\C)$ given by pushout along $i^\op : \scrK^\op \to \J^\op$.      
}
\end{para}

\begin{lem}
\label{pushoutalgebras}
Suppose that $\V$ is complete and cocomplete, and let $k : \scrK \hookrightarrow \C$ be a subcategory of arities that is contained in some small subcategory of arities $j : \J \hookrightarrow \C$. Then for each $\scrK$-pretheory $\scrT$ we have $\scrT\Alg^! \cong i_\ast(\scrT)\Alg^!$ in $\V\CAT/\C$.
\end{lem}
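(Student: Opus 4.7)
The plan is to invoke the pullback pasting lemma, combining the pullback that defines $i_\ast\scrT\Alg^!$ with a pullback obtained by applying $[-,\V]$ to the pushout defining $i_\ast\scrT$. Writing $\tau : \scrK^\op \to \scrT$, $\tau' : \J^\op \to i_\ast\scrT$, and $j_\scrT : \scrT \to i_\ast\scrT$ for the maps in and out of the pushout (with $\tau'$ identity-on-objects, since bijective-on-objects $\V$-functors form the left class of a factorization system and are thus stable under pushout), the first input we need is the elementary observation that $k = j \circ i$ entails $N_k = [i^\op, 1] \circ N_j : \C \to [\scrK^\op, \V]$.

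The main technical step, and the principal obstacle, is to verify that the pushout square defining $i_\ast\scrT$ is sent by $[-,\V]$ to a pullback square in $\V'\CAT$:
\[
\begin{tikzcd}
{[i_\ast\scrT, \V]} \arrow[r, "{[j_\scrT, 1]}"] \arrow[d, "{[\tau', 1]}"'] & {[\scrT, \V]} \arrow[d, "{[\tau, 1]}"] \\
{[\J^\op, \V]} \arrow[r, "{[i^\op, 1]}"'] & {[\scrK^\op, \V]}.
\end{tikzcd}
\]
This expresses the fact that a $\V'$-functor $i_\ast\scrT \to \V$ (equivalently, a $\V$-functor, since the domain is a $\V$-category) corresponds uniquely to a compatible pair of $\V$-functors out of $\J^\op$ and $\scrT$, and similarly for $\V$-natural transformations. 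The object-level bijection is the defining universal property of the pushout in $\V\CAT$, while the hom-object pullback condition can be verified by the analogous argument at the level of $\V$-natural transformations, using that naturality in $i_\ast\scrT$ is generated by naturality on the images of $\tau'$ and $j_\scrT$, subject to agreement on $\scrK^\op$.

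With this in place, the plan is to paste the above pullback with the pullback \eqref{eqn:concrete_pb} defining $i_\ast\scrT\Alg^!$, yielding the commutative diagram
\[
\begin{tikzcd}
i_\ast\scrT\Alg^! \arrow[r, "\sfM^{i_\ast\scrT}"] \arrow[d, "U^{i_\ast\scrT}"'] & {[i_\ast\scrT, \V]} \arrow[r, "{[j_\scrT, 1]}"] \arrow[d, "{[\tau', 1]}"] & {[\scrT, \V]} \arrow[d, "{[\tau, 1]}"] \\
\C \arrow[r, "N_j"'] & {[\J^\op, \V]} \arrow[r, "{[i^\op, 1]}"'] & {[\scrK^\op, \V]},
\end{tikzcd}
\]
whose outer rectangle is a pullback by the pasting lemma. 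Its bottom composite is $N_k$, so the outer rectangle has precisely the shape of the defining pullback for $\scrT\Alg^!$ as the category of concrete algebras for the $\scrK$-pretheory $\scrT$. By the uniqueness of pullbacks we obtain an isomorphism $i_\ast\scrT\Alg^! \cong \scrT\Alg^!$ in $\V'\CAT$ that, by construction, commutes with the forgetful $\V'$-functors $U^{i_\ast\scrT}$ and $U^\scrT$ to $\C$. In particular, $\scrT\Alg^!$ is a $\V$-category iff $i_\ast\scrT\Alg^!$ is, so the isomorphism lives in $\V\CAT/\C$ whenever either side does, as asserted by the lemma.
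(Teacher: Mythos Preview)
Your proof is correct and follows essentially the same approach as the paper: apply $[-,\V]$ to the pushout defining $i_\ast(\scrT)$ to obtain a pullback square, paste it with the defining pullback for $i_\ast(\scrT)\Alg^!$, and observe that the bottom composite is $N_k$. The paper simply asserts that $[-,\V]:\V\Cat^\op \to \V\CAT$ sends the pushout to a pullback without further comment, whereas you sketch a justification via the object- and hom-level universal properties; your additional care about the $\V$ versus $\V'$ distinction is harmless here since $\V$ is complete and all the categories involved are small.
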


\begin{proof}
The $\J$-pretheory $i_\ast(\scrT)$ is obtained as the following pushout in $\V\Cat$:
\[\begin{tikzcd}
	{\scrK^\op} && {\J^\op} \\
	\\
	\scrT && {i_\ast(\scrT)}.
	\arrow["{i^\op}", from=1-1, to=1-3]
	\arrow["\tau"', from=1-1, to=3-1]
	\arrow[from=3-1, to=3-3]
	\arrow["\tau_\J", from=1-3, to=3-3]
\end{tikzcd}\]
By applying the functor $[-, \V] : \V\Cat^\op \to \V\CAT$ to this pushout square, we then obtain the following pullback square in $\V\CAT$:
\[\begin{tikzcd}
	{\left[i_\ast(\scrT), \V\right]} && {[\scrT, \V]} \\
	\\
	{\left[\J^\op, \V\right]} && {\left[\scrK^\op, \V\right]}.
	\arrow[from=1-1, to=1-3]
	\arrow["{[\tau_\J, 1]}"', from=1-1, to=3-1]
	\arrow["{[\tau, 1]}", from=1-3, to=3-3]
	\arrow["{\left[i^\op, 1\right]}"', from=3-1, to=3-3]
\end{tikzcd}\]
Composing this pullback square with the pullback square that defines $i_\ast(\scrT)\Alg^!$, we then obtain the following commutative diagram in $\V\CAT$ whose outer rectangle is a pullback:
\[\begin{tikzcd}
	{i_\ast(\scrT)\Alg^!} && {\left[i_\ast(\scrT), \V\right]} && {[\scrT, \V]} \\
	\\
	\C && {\left[\J^\op, \V\right]} && {\left[\scrK^\op, \V\right]}.
	\arrow[from=1-3, to=1-5]
	\arrow["{\left[\tau_\J, 1\right]}"', from=1-3, to=3-3]
	\arrow["{[\tau, 1]}", from=1-5, to=3-5]
	\arrow["{\left[i^\op, 1\right]}"', from=3-3, to=3-5]
	\arrow["{N_j}"', from=3-1, to=3-3]
	\arrow["{\sfM^{i_\ast(\scrT)}}", from=1-1, to=1-3]
	\arrow["{U^{i_\ast(\scrT)}}"', from=1-1, to=3-1]
\end{tikzcd}\]
But since the lower composite is equal to $N_k : \C \to \left[\scrK^\op, \V\right]$, we deduce that $\scrT\Alg^! \cong i_\ast(\scrT)\Alg^!$ in $\V\CAT/\C$ by the definition of $\scrT\Alg^!$ as a pullback.  
\end{proof}

\noindent From Lemma \ref{pushoutalgebras} we immediately deduce the following theorem:

\begin{theo}
\label{pushoutthm}
Suppose that $\V$ is complete and cocomplete, and let $\scrK \hookrightarrow \C$ be a subcategory of arities that is contained in some small and strongly amenable subcategory of arities. Then $\scrK \hookrightarrow \C$ is strongly amenable. \qed   
\end{theo}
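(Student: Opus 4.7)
The plan is to reduce admissibility of an arbitrary $\scrK$-pretheory to admissibility of its pushout along $i^\op:\scrK^\op \hookrightarrow \J^\op$, where $j:\J \hookrightarrow \C$ is a small and strongly amenable subcategory of arities containing $k:\scrK \hookrightarrow \C$ (so $i:\scrK \hookrightarrow \J$ denotes the inclusion). Concretely, given any $\scrK$-pretheory $\scrT$, I would form the $\J$-pretheory $i_\ast(\scrT)$ as in \ref{pushoutfunctor}; note that the cocompleteness of $\V$ together with the smallness of $\J$ is exactly what makes this pushout construction available and keeps the resulting $\V$-functor identity-on-objects.

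Next, I would invoke the strong amenability of $\J$ to conclude that the $\J$-pretheory $i_\ast(\scrT)$ is admissible, so that in particular $U^{i_\ast(\scrT)} : i_\ast(\scrT)\Alg^! \to \C$ has a left adjoint (and $i_\ast(\scrT)\Alg^!$ is a $\V$-category). Lemma \ref{pushoutalgebras} then provides an isomorphism $\scrT\Alg^! \cong i_\ast(\scrT)\Alg^!$ in $\V\CAT/\C$, which we use to transport the left adjoint of $U^{i_\ast(\scrT)}$ across to obtain a left adjoint of $U^\scrT : \scrT\Alg^! \to \C$. Since admissibility is just the existence of such a left adjoint (the requirement that $\scrT\Alg^!$ be a $\V$-category being automatic by Remark \ref{monadic_rmk}), this shows that $\scrT$ is admissible.

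Because $\scrT$ was an arbitrary $\scrK$-pretheory, we conclude that $\scrK \hookrightarrow \C$ is strongly amenable. There is no serious obstacle here beyond packaging: the genuine work has already been done in Lemma \ref{pushoutalgebras} (together with the construction of $i_\ast$ in \ref{pushoutfunctor}), and the remaining step is merely to observe that admissibility transports along isomorphisms in $\V\CAT/\C$.
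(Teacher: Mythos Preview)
Your proposal is correct and follows essentially the same approach as the paper: the paper simply states that the theorem follows immediately from Lemma \ref{pushoutalgebras}, and your argument spells out exactly this deduction (using the pushout construction of \ref{pushoutfunctor}, the strong amenability of $\J$, and transporting admissibility along the isomorphism of Lemma \ref{pushoutalgebras}).
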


\section{Examples}
\label{amenableexamples}

In this section, we shall establish some classes of examples of amenable and strongly amenable subcategories of arities. 

\subsection{Eleutheric subcategories of arities}
\label{eleuthericsubsection}

We shall first show that every (possibly large) \emph{eleutheric} \cite{EAT, Pres} subcategory of arities is amenable. Throughout this subsection,  as in \S\ref{strsemsection} and \S\ref{mnd_th_section}, we let $j : \J \hookrightarrow \C$ be a (possibly large) subcategory of arities in an arbitrary $\V$-category $\C$ enriched in a symmetric monoidal closed category $\V$ with equalizers. 

\begin{defn_sub}
\label{eleutheric}
{
The subcategory of arities $j : \J \hookrightarrow \C$ is \textbf{eleutheric} \cite{EAT, Pres} if every $\V$-functor $H : \J \to \C$ has a left Kan extension along $j$ that is preserved by each $\C(J, -) : \C \to \V$ ($J \in \ob\J$). Writing $\Phi_{\underJ}$ for the class of (possibly large) weights $\C(j-, C) : \J^\op \to \V$ ($C \in \ob\C$), we have that $\J$ is eleutheric iff $\C$ is $\Phi_{\underJ}$-cocomplete and each $\C(J, -) : \C \to \V$ ($J \in \ob\J$) preserves $\Phi_{\underJ}$-colimits \cite[Definition 3.3]{Pres}. The repletion of the full sub-$\V'$-category $\Phi_{\underJ} \hookrightarrow \left[\J^\op, \V\right]$ is the $\V$-category $j\Ner(\V)$. 
}
\end{defn_sub}

\noindent We shall show in Corollary  \ref{JnervousJarycor} below that if $\J$ is eleutheric, then a $\V$-monad on $\C$ is $\J$-nervous iff it is \emph{$\J$-ary} in the sense of the following definition:

\begin{defn_sub}
\label{Jary}
{
A $\V$-endofunctor $T : \C \to \C$ is \textbf{$\J$-ary} \cite{EAT, Pres} (or \textbf{$j$-ary}) if $T$ preserves $\Phi_{\underJ}$-colimits, or equivalently if $T$ preserves left Kan extensions along $j$. A $\V$-monad $\T$ on $\C$ is \textbf{$\J$-ary} if its underlying $\V$-endofunctor is $\J$-ary.
} 
\end{defn_sub}

\begin{egg_sub}
\label{eleuthericexamples}
{
We have the following examples of eleutheric subcategories of arities $\J \hookrightarrow \C$ and their associated $\J$-ary $\V$-endofunctors, in view of \cite[Examples 3.9 and 4.7]{Pres}:
\begin{enumerate}
[leftmargin=0pt,labelindent=0pt,itemindent=*,label=(\arabic*)]
\item Suppose that $\V$ is locally $\alpha$-presentable as a closed category (in the sense of \cite[7.4]{Kellystr}). If $\C$ is a locally $\alpha$-presentable $\V$-category and $\C_\alpha$ is a skeleton of the full sub-$\V$-category of $\C$ consisting of the (enriched) $\alpha$-presentable objects, then $\C_\alpha \hookrightarrow \C$ is a small and eleutheric subcategory of arities, and the $\C_\alpha$-ary $\V$-endofunctors are precisely the $\V$-endofunctors that preserve small conical $\alpha$-filtered colimits. In fact, \emph{every} small subcategory of arities in this locally presentable setting is contained in some small and eleutheric subcategory of arities \cite[Example 3.9(1)]{Pres}. When $\C = \V$, the subcategory of arities $\V_\alpha \hookrightarrow \V$ is a \emph{system of arities} \eqref{system_arities_para}, because the unit object $I$ is $\alpha$-presentable and the $\alpha$-presentable objects are closed under the monoidal product \cite[Example 3.4]{EAT}.  

In particular, when $\C = \V = \Set$, the system of arities $\FinCard \hookrightarrow \Set$ consisting of the finite cardinals is eleutheric, and the $\FinCard$-ary endofunctors are precisely the finitary endofunctors. 

\item Suppose that $\V$ has (conical) finite copowers $n \cdot I$ ($n \in \N$) of the unit object $I$. Then there is a small system of arities \eqref{system_arities_para} $j : \SF(\V) \rightarrowtail \V$, where $\SF(\V)$ is the $\V$-category with $\ob\left(\SF(\V)\right) = \N$ and $\SF(\V)(n, m) = \V(n \cdot I, m \cdot I)$ for all $n, m \in \N$ (see \cite[Example 3.7]{EAT}). Provided that $\V$ is complete, cocomplete, and cartesian closed, or more generally a $\pi$\emph{-category} in the sense of \cite{BorceuxDay}, the system of arities $j : \SF(\V) \rightarrowtail \V$ is eleutheric. In the case where $\V$ is cartesian closed, $\SF(\V)$ is isomorphic to the free $\V$-category on $\FinCard$ by \cite[\S 3]{KellyLackstronglyfinitary}, and the $\SF(\V)$-ary $\V$-endofunctors are precisely the \emph{strongly finitary} $\V$-endofunctors of Kelly and Lack \cite[\S 3]{KellyLackstronglyfinitary}.

\item The inclusion $\{I\} \hookrightarrow \V$ of the unit object is a small and eleutheric system of arities (see also \cite[Example 3.6]{EAT}), and the $\{I\}$-ary $\V$-endofunctors on $\V$ are precisely those $\V$-endofunctors that are isomorphic to $X \tensor (-) : \V \to \V$ for some $X \in \ob\V$.

\item Given any $\V$-category $\C$, the identity $\V$-functor $\C \hookrightarrow \C$ is an eleutheric subcategory of arities, and the $\C$-ary $\V$-endofunctors are arbitrary $\V$-endofunctors. When $\C = \V$, the subcategory of arities $\V \hookrightarrow \V$ is a system of arities \eqref{system_arities_para} (see also \cite[Example 3.5]{EAT}).  

\item If $\V$ is complete and $\A$ is a small $\V$-category, then the Yoneda embedding $\y_\A : \A^\op \rightarrowtail [\A, \V]$ is a small and eleutheric subcategory of arities, and a $\V$-endofunctor on $[\A, \V]$ is $\y_\A$-ary iff it is cocontinuous.

\item Suppose that $\V$ is complete and cocomplete, and let $\Phi$ be a class of small weights that satisfies Axiom A of Lack-Rosick\'y \cite{LR} and is \textit{locally small} in the sense of \cite[Definition 8.10]{KS} (as in \cite[p. 370]{LR}).  Let $\calT$ be a $\Phi$\emph{-theory}, i.e.~a small $\V$-category $\calT$ with $\Phi$-limits, and let $\C = \Phi\Mod(\calT)$ be the $\V$-category of \emph{models} of $\calT$ in $\V$, i.e.~the full sub-$\V$-category of $[\calT, \V]$ consisting of the $\Phi$-continuous $\V$-functors. As in \cite{LR}, we call such $\V$-categories $\C$ \emph{locally $\Phi$-presentable}. The (corestricted) Yoneda embedding $\y_\Phi:\calT^\op \rightarrowtail \C$ is then an eleutheric subcategory of arities, and a $\V$-endofunctor on $\C = \Phi\Mod(\calT)$ is $\y_\Phi$-ary iff it preserves small $\Phi$-flat colimits, so that the $\y_\Phi$-ary $\V$-monads on $\C = \Phi\Mod(\calT)$ are the \emph{$\Phi$-accessible} $\V$-monads of \cite{LR}. 

\item As a special case of (6), let $\mathbb{D}$ be any small class of small categories that is a \emph{sound doctrine} in the sense of \cite{ABLR}, and suppose that $\V$ is locally $\bbD$-presentable as a $\tensor$-category in the sense of \cite[\S 5.4]{LR}. Then we can take $\Phi := \Phi_{\mathbb{D}}$ to be the saturation of the class of (small) weights for conical $\mathbb{D}$-limits and cotensors by $\bbD$-presentable objects of $\V$. A $\V$-endofunctor on $\Phi_{\mathbb{D}}\Mod(\calT)$ is then $\y_{\Phi_{\mathbb{D}}}$-ary iff it preserves small $\Phi_{\mathbb{D}}$-flat colimits, which is equivalent to its preservation of small conical $\bbD$-filtered colimits \cite[Lemma 4.8]{Pres}. 
\end{enumerate}
}
\end{egg_sub}

We now wish to characterize eleutheric subcategories of arities in terms of free cocompletions for classes of weights. Recall from \cite[Remarks 3.7]{KS} and \cite[Theorem 5.35]{Kelly} that if $\Phi$ is a class of weights, then a $\V$-functor $F : \A \to \B$ \textbf{presents $\B$ as a free $\Phi$-cocompletion of $\A$} if $\B$ is $\Phi$-cocomplete and for any $\Phi$-cocomplete $\V$-category $\X$, composition with $F$ induces an equivalence of categories $\Phi\Cocts(\B, \X) \xrightarrow{\sim} \V\CAT(\A, \X)$, where $\Phi\Cocts(\B, \X)$ is the full subcategory of $\V\CAT(\B, \X)$ consisting of the $\Phi$-cocontinuous $\V$-functors. 

\begin{rmk_sub}
\label{smalleleutheric}
We now recall the following equivalent characterizations of \emph{small} eleutheric subcategories of arities obtained in \cite[Propositions 3.6 and 3.8]{Pres}, when $\V$ is complete and cocomplete:
\begin{enumerate}[leftmargin=*]
\item $\J$ is eleutheric iff $j : \J \hookrightarrow \C$ presents $\C$ as a free $\Phi_{\underJ}$-cocompletion of $\J$.

\item $\J$ is eleutheric iff there is a class of small weights $\Psi$ such that $j : \J \hookrightarrow \C$ presents $\C$ as a free $\Psi$-cocompletion of $\J$ (in which case a $\V$-endofunctor on $\C$ is $\J$-ary iff it is $\Psi$-cocontinuous \cite[Proposition 4.2]{Pres}).
\end{enumerate}
The first author showed in \cite[Theorem 7.8]{EAT} that if $\V$ is only assumed to have equalizers and $j : \J \hookrightarrow \V$ is a (possibly large) \emph{system} of arities in $\V$ \eqref{system_arities_para}, then $\J$ is eleutheric iff $j : \J \hookrightarrow \V$ presents $\V$ as a free $\Phi_{\underJ}$-cocompletion of $\J$. We now wish to simultaneously generalize the latter result  and (1) above to the more general context of the present subsection. We first require the following result, which is  a generalization of \cite[Proposition 7.9]{EAT}.
\end{rmk_sub}

\begin{prop_sub}
\label{leftKanJary}
Let $\X$ be a $\Phi_{\underJ}$-cocomplete $\V$-category, and let $H : \C \to \X$ be a $\V$-functor. If $H$ preserves $\Phi_{\underJ}$-colimits, then $H$ is a left Kan extension along $j : \J \hookrightarrow \C$, and the converse also holds if $\J$ is eleutheric. 
\end{prop_sub}

\begin{proof}
We first deduce by \cite[Theorem 5.29]{Kelly} that $H$ is a left Kan extension along $j : \J \hookrightarrow \C$ iff $H$ preserves the weighted colimits $C = \C(j-, C) \ast j$ ($C \in \ob\C$) that exhibit $j$ as a dense $\V$-functor, since these weighted colimits constitute a \emph{density presentation} for $j$ (see \cite[\S 5.4]{Kelly} and also \ref{density_pres} below). So if $H$ preserves $\Phi_{\underJ}$-colimits, then $H$ certainly preserves the given weighted colimits and is thus a left Kan extension along $j$. Supposing now that $\J$ is eleutheric, let $F : \J \to \X$ be a $\V$-functor, and let us show that $\Lan_j F : \C \to \X$ (which exists because $\X$ is $\Phi_{\underJ}$-cocomplete) preserves $\Phi_{\underJ}$-colimits. Recall that $j\Ner(\V)$ is a $\V$-category and may be described as the repletion of $\Phi_{\underJ} \hookrightarrow \left[\J^\op, \V\right]$ \eqref{eleutheric}. Since $\X$ is $\Phi_{\underJ}$-cocomplete, the $\V$-functor $F : \J \to \X$ induces a $\V$-functor $(-) \ast F : j\Ner(\V) \to \X$, and $\Lan_j F$ then factors as the composite $\C \xrightarrow{N_j} j\Ner(\V) \xrightarrow{(-) \ast F} \X$. The eleuthericity of $\J$ entails that the equivalence $N_j$ sends each $\Phi_{\underJ}$-colimit in $\C$ to a \emph{pointwise} colimit. But $(-) \ast F$ sends pointwise colimits to colimits in $\X$ \cite[(3.23)]{Kelly}, so that the composite $\Lan_jF$ preserves $\Phi_{\underJ}$-colimits.   
\end{proof}

\begin{cor_sub}
Suppose that $j:\J \hookrightarrow \C$ is eleutheric.  Then a $\V$-monad $\T$ on $\C$ is $\J$-ary iff its underlying $\V$-endofunctor $T:\C \to \C$ is a left Kan extension along $j$.
\end{cor_sub}

\noindent Given a $\Phi_{\underJ}$-cocomplete $\V$-category $\X$, we write $\mathscr{L}_\X$ for the full subcategory of $\V\CAT(\C, \X)$ consisting of those $\V$-functors that are left Kan extensions along $j$. 

\begin{lem_sub}
\label{cocompletion_lem}
$j : \J \hookrightarrow \C$ presents $\C$ as a free $\Phi_{\underJ}$-cocompletion of $\J$ iff $\scrL_\X = \Phi_{\underJ}\Cocts(\C, \X)$ for each $\Phi_{\underJ}$-cocomplete $\V$-category $\X$. 
\end{lem_sub}

\begin{proof}
For each $\Phi_{\underJ}$-cocomplete $\V$-category $\X$, we have an adjunction 
\[\begin{tikzcd}
	{\V\CAT(\J, \X)} &&& {} & {\V\CAT(\C, \X)}
	\arrow[""{name=0, anchor=center, inner sep=0}, "{\Lan_j}", shift left=3, from=1-1, to=1-5]
	\arrow[""{name=1, anchor=center, inner sep=0}, "{\V\CAT(j, \X)}", shift left=3, from=1-5, to=1-1]
	\arrow["\vdash"{anchor=center, rotate=90}, draw=none, from=1, to=0]
\end{tikzcd} \]
with $\Lan_j$ fully faithful because $j$ is fully faithful. The functor $\V\CAT(j, \X)$ therefore restricts to an equivalence $E_1 : \scrL_\X \xrightarrow{\sim} \V\CAT(\J, \X)$. 
Noting that $\Phi_{\underJ}\Cocts(\C, \X) \subseteq \scrL_\X$ by Proposition \ref{leftKanJary}, consider the following commutative diagram, where $E_2 : \Phi_{\underJ}\Cocts(\C, \X) \to \V\CAT(\J, \X)$ is defined to be the given composite:
\[\begin{tikzcd}
	{\Phi_{\underJ}\Cocts(\C, \X)} \\
	{\mathscr{L}_\X} && {\V\CAT(\J, \X)}.
	\arrow[hook, from=1-1, to=2-1]
	\arrow["{E_2}", from=1-1, to=2-3]
	\arrow["{E_1}"', from=2-1, to=2-3]
\end{tikzcd}\]
Since the inclusion $\Phi_{\underJ}\Cocts(\C, \X) \hookrightarrow \scrL_\X$ is replete and $E_1$ is an equivalence, one readily verifies that $E_2$ is an equivalence iff $\Phi_{\underJ}\Cocts(\C, \X) = \scrL_\X$. But $j : \J \hookrightarrow \C$ presents $\C$ as a free $\Phi_{\underJ}$-cocompletion of $\J$ iff $E_2 : \Phi_{\underJ}\Cocts(\C, \X) \to \V\CAT(\J, \X)$ is an equivalence for each $\Phi_{\underJ}$-cocomplete $\V$-category $\X$, whence the desired result follows.   
\end{proof}

\begin{prop_sub}
\label{eleuthericcocompletion}
If $\J$ is eleutheric, then $j : \J \hookrightarrow \C$ presents $\C$ as a free $\Phi_{\underJ}$-cocompletion of $\J$. If $\V$ is $\Phi_{\underJ}$-cocomplete, then the converse also holds. 
\end{prop_sub}

\begin{proof}
If $\J$ is eleutheric, then $\C$ is $\Phi_{\underJ}$-cocomplete and $\mathscr{L}_\X = \Phi_{\underJ}\Cocts(\C, \X)$ for each $\Phi_{\underJ}$-cocomplete $\V$-category $\X$ by Proposition \ref{leftKanJary}, so that $j : \J \hookrightarrow \C$ presents $\C$ as a free $\Phi_{\underJ}$-cocompletion of $\J$ by Lemma \ref{cocompletion_lem}. Conversely, suppose that $\V$ is $\Phi_{\underJ}$-cocomplete and that $j : \J \hookrightarrow \C$ presents $\C$ as a free $\Phi_{\underJ}$-cocompletion of $\J$. Then $\mathscr{L}_\V = \Phi_{\underJ}\Cocts(\C, \V)$ by Lemma \ref{cocompletion_lem}. But each $\C(J, -) : \C \to \V$ ($J \in \ob\J$) is a left Kan extension of $\J(J, -) : \J \to \V$ along $j$ and hence is $\Phi_{\underJ}$-cocontinuous, proving that $\J$ is eleutheric.     
\end{proof} 

\noindent We now show that eleutheric subcategories of arities are amenable. Given a $\J$-theory $\scrT$, recall from \ref{Jtheoryfunctor} and Remark \ref{phi_rmk} that the $\V'$-functor $\phi_\scrT : \J \to \scrT\Alg^!$ is a $j$-relative left adjoint for $U^\scrT : \scrT\Alg^! \to \C$. 

\begin{theo_sub}
\label{eleuthericamenable}
Every eleutheric subcategory of arities $\J \hookrightarrow \C$ is amenable.  
\end{theo_sub}

\begin{proof}
Let $\scrT$ be a $\J$-theory. By Remark \ref{monadic_rmk}, it suffices to show that the $\V'$-functor $U^\scrT : \scrT\Alg^! \to \C$ has a left adjoint. Since $\phi_\scrT : \J \to \scrT\Alg^!$ is a $j$-relative left adjoint for $U^\scrT : \scrT\Alg^! \to \C$, it is equivalent by \cite[Lemma 8.4]{EAT} to show that the $\V'$-enriched left Kan extension of $\phi_\scrT : \J \to \scrT\Alg^!$ along $j : \J \to \C$ exists. It therefore suffices to show for each $C \in \ob\C$ that $\scrT\Alg^!$ admits the weighted colimit $\C(j-, C) \ast \phi_\scrT$. By (the $\V'$-enriched version of) Proposition \ref{createscolimits}, it then suffices to show that $\C$ admits the weighted colimit $\C(j-, C) \ast U^\scrT\phi_\scrT$, and that this colimit is $\J$-stable; but since $U^\scrT\phi_\scrT = T_\scrT : \J \to \C$ \eqref{Jtheoryfunctor}, this follows from $\J$ being eleutheric.    
\end{proof}

\noindent From Theorems \ref{monadtheoryequivalence} and \ref{eleuthericamenable}, we thus obtain an equivalence between $\J$-theories and $\J$-nervous $\V$-monads on $\C$ for each eleutheric subcategory of arities $\J \hookrightarrow \C$. We now proceed to show (see Corollary \ref{JnervousJarycor} below) that if $\J$ is eleutheric, then $\J$-nervous $\V$-monads coincide with $\J$-ary $\V$-monads \eqref{Jary}. 

\begin{lem_sub}
\label{monadisolemma}
Let $G : \A \to \X$ and $H : \B \to \X$ be strictly monadic $\V$-functors, let $F$ be left adjoint to $G$ with unit $\eta : 1 \Rightarrow GF$, and let $K : \A \to \B$ be a $\V$-functor with $HK = G$. Suppose that $\eta : 1 \Rightarrow GF = HKF$ exhibits $KF$ as left adjoint to $H$. Then $K : \A \to \B$ is an isomorphism.  
\end{lem_sub}

\begin{proof}
By strict monadicity of $G$ and $H$, we may suppose without loss of generality that $(\A, G) = \left(\T\Alg, U^\T\right)$ and $(\B, H) = \left(\bbS\Alg, U^\bbS\right)$ for $\V$-monads $\T = (T, \eta, \mu)$ and $\bbS = (S, \zeta, \nu)$ on $\X$, so that $K = \alpha^* : \T\Alg \to \bbS\Alg$ for a unique $\V$-monad morphism $\alpha : \bbS \to \T$ (see the first paragraph of \S\ref{mnd_th_section}). Now $\alpha^*$ sends each free $\T$-algebra $(TX, \mu_X)$ ($X \in \ob\X$) to the $\bbS$-algebra $(TX, \mu_X \circ \alpha_{TX})$, and the fact that $\alpha : \bbS \to \T$ is a $\V$-monad morphism entails that $\alpha_X : SX \to TX$ is an $\bbS$-algebra morphism $\alpha_X : (SX, \nu_X) \to (TX, \mu_X \circ \alpha_{TX})$ that commutes with the unit components $\zeta_X : X \to SX$ and $\eta_X : X \to TX$. But $\eta_X : X \to TX$ exhibits $(TX, \mu_X \circ \alpha_{TX})$ as a free $\bbS$-algebra on $X$ by assumption, so that $\alpha_X : (SX, \nu_X) \to (TX, \mu_X \circ \alpha_{TX})$ is an isomorphism (being the canonical isomorphism between two free $\bbS$-algebras on $X$). So $\alpha : \bbS \to \T$ is an isomorphism, and hence $K = \alpha^*$ is an isomorphism.     
\end{proof}

\begin{lem_sub}
\label{monad_iso_lemma_2}
Let $G : \A \to \C$ and $H : \B \to \C$ be strictly monadic $\V$-functors, let $F$ be left adjoint to $G$ with unit $\eta : 1 \Rightarrow GF$, and let $K : \A \to \B$ be a $\Phi_{\underJ}$-cocontinuous $\V$-functor with $HK = G$. Suppose that $\eta j : j \Rightarrow GFj = HKFj$ exhibits $KFj$ as a $j$-relative left adjoint for $H$ \eqref{J_rel_adj}. Then $K : \A \to \B$ is an isomorphism. 
\end{lem_sub}

\begin{proof}
To show that $K$ is an isomorphism, it suffices by Lemma \ref{monadisolemma} to show that $\eta : 1_\C \Rightarrow GF = HKF$ exhibits $KF$ as left adjoint to $H$. Since $1 : Fj \Rightarrow Fj$ exhibits $F$ as $\Lan_j Fj$ \eqref{J_rel_adj} and $K : \A \to \B$ preserves left Kan extensions along $j$, we deduce that $1 : KFj \Rightarrow KFj$ exhibits $KF$ as $\Lan_j KFj$. By assumption, $\eta j : j \Rightarrow GFj = HKFj$ exhibits $KFj$ as a $j$-relative left adjoint for $H$, and so we deduce by \ref{J_rel_adj} that $\eta:1_{\C} \Rightarrow GF = HKF$ exhibits $KF:\C \to \B$ as left adjoint to $H:\B \to \C$.
\end{proof}

\begin{defn_sub}
\label{Jary_monadic}
{
Let $G : \A \to \C$ be a $\V$-functor, so that $(\A, G)$ is a $\V$-category over $\C$. We say that $G$, or $(\A, G)$, is \textbf{strictly $\J$-ary monadic} \cite[Definition 4.10]{Pres} if there is a $\J$-ary $\V$-monad $\T$ on $\C$ \eqref{Jary} such that $\A \cong \T\Alg$ in $\V\CAT/\C$.
}
\end{defn_sub}

\begin{para_sub}\label{para:jary_monadic}
A strictly $\J$-ary monadic $\V$-functor $G : \A \to \C$ is $\J$-tractable by Lemma \ref{adjointtractable}, and $G$ creates $\Phi_{\underJ}$-colimits by \cite[Proposition 4.11]{Pres} and thus conditionally preserves them (in the sense defined just before Proposition \ref{JnervousJflat}). In particular, if $\C$ has $\Phi_{\underJ}$-colimits, then $\A$ has them and $G$ preserves them.
\end{para_sub}

\begin{theo_sub}
\label{JnervousJarythm}
Suppose that $\J \hookrightarrow \C$ is eleutheric, and let $G : \A \to \C$ be a $\V$-functor. Then $G$ is strictly $\J$-ary monadic \eqref{Jary_monadic} iff $G$ is strictly $\J$-algebraic \eqref{J_alg_Vcategory}.
\end{theo_sub}

\begin{proof}
If $G$ is strictly $\J$-algebraic, then we deduce from Theorem \ref{Jnervous_thm} that $G$ is strictly monadic, and the associated $\V$-monad on $\C$ is $\J$-nervous. Since $\J$ is eleutheric, the $\V$-category $\C$ has $\Phi_{\underJ}$-colimits, and such colimits are $\J$-flat. Thus, the $\J$-nervous $\V$-monad $\T$ preserves $\Phi_{\underJ}$-colimits by Proposition \ref{JnervousJflat} (and the amenability of $\J$ established in Theorem \ref{eleuthericamenable}), so that $\T$ is $\J$-ary and $G$ is strictly $\J$-ary monadic. 

Suppose conversely that $G$ is strictly $\J$-ary monadic. Then the $\V$-functors $G : \A \to \C$ and $U^{\Str G} : \Str G\Alg^! \to \C$ are strictly monadic (the latter by Proposition  \ref{strictlymonadic} and Theorem \ref{eleuthericamenable}), and we have $U^{\Str G} \circ \sfE_{(\A, G)} = G$. We write $F$ for the left adjoint of $G$ and $\eta : 1 \Rightarrow GF = U^{\Str G} \sfE_{(\A, G)} F$ for the unit, so that $\eta j : j \Rightarrow GFj$ exhibits $Fj$ as a $j$-relative left adjoint for $G$ (by \ref{J_rel_adj}) and we may identify $\Str G$ with the $\J$-theory $\A_{Fj}^\op$ of Proposition \ref{Str_prop}. We deduce from the final assertion of Proposition \ref{Str_prop} that $\eta j : j \Rightarrow GFj = U^{\Str G} \sfE_{(\A, G)} Fj$ exhibits $\sfE_{(\A, G)} Fj$ as a $j$-relative left adjoint for $U^{\Str G}$. We now show that $\sfE_{(\A, G)} : \A \to \Str G\Alg^!$ is $\Phi_{\underJ}$-cocontinuous. Since the fully faithful $\sfM^{\Str G} : \Str G \Alg^! \rightarrowtail [\Str G, \V]$ reflects colimits, it suffices to show that $\sfM^{\Str G} \circ \sfE_{(\A, G)} = \widetilde{m}_G : \A \to [\Str G, \V]$ preserves $\Phi_{\underJ}$-colimits, for which it suffices to show that each $m_G J : \A \to \V$ ($J \in \ob\J$) preserves $\Phi_{\underJ}$-colimits (where $m_G : \Str G \to [\A, \V]$ is defined as in \ref{new_structure}). We have
\[ m_G J = m_G \tau_G J = \C(jJ, G-) = \C(J, G-), \] and $\C(J, G-)$ preserves $\Phi_{\underJ}$-colimits because $G$ preserves them \eqref{para:jary_monadic} and $\C(J, -)$ preserves them (since $\J$ is eleutheric). We finally deduce from Lemma \ref{monad_iso_lemma_2} that $\sfE_{(\A, G)} : \A \to \Str G\Alg^!$ is an isomorphism, and thus $G$ is strictly $\J$-algebraic.  
\end{proof}

\noindent We immediately obtain the following from Proposition \ref{Jnervous_equiv} and Theorems  \ref{eleuthericamenable} and \ref{JnervousJarythm}:

\begin{cor_sub}
\label{JnervousJarycor}
Suppose that $\J \hookrightarrow \C$ is eleutheric, and let $\T$ be a $\V$-monad on $\C$. Then $\T$ is $\J$-nervous iff $\T$ is $\J$-ary. \qed
\end{cor_sub}

\noindent From Theorems \ref{strsemadjunction}, \ref{monadtheoryequivalence}, and \ref{eleuthericamenable} we now deduce the following theorem, part of which (in view of Proposition \ref{systemaritiesthm}) generalizes the equivalence between $\J$-theories and $\J$-ary $\V$-monads on $\V$ for an eleutheric system of arities $j : \J \rightarrowtail \V$ \eqref{system_arities_para} previously established by the first author in \cite[Theorem 11.8]{EAT}.  

\begin{theo_sub}
\label{eleuthericequivalence}
Let $\J \hookrightarrow \C$ be an eleutheric subcategory of arities. Then we have a structure--semantics adjunction $\Str \dashv \Sem : \Th_{\underJ}(\C)^\op \to \J\Tract(\C)$ between $\J$-theories and $\J$-tractable $\V$-categories over $\C$, as well as an equivalence $\Mnd_{\underJ}(\C) \simeq \Th_{\underJ}(\C)$
between $\J$-theories and $\J$-nervous $\V$-monads on $\C$, equivalently $\J$-ary $\V$-monads on $\C$ \eqref{JnervousJarycor}. \qed 
\end{theo_sub}

\begin{egg_sub}
\label{eleutheric_str_sem}
{
The structure--semantics adjunction of Theorem \ref{eleuthericequivalence} specializes to recover the following structure--semantics adjunctions previously established in the literature (where each subcategory of arities is eleutheric by Example \ref{eleuthericexamples}):
\begin{enumerate}
[leftmargin=0pt,labelindent=0pt,itemindent=*,label=(\arabic*)]
\item By taking $\C = \V = \Set$ and $\J = \FinCard \hookrightarrow \Set$, we recover the structure--semantics adjunction between ordinary Lawvere theories and tractable categories over $\Set$ established by Lawvere in \cite{Law:PhD}.

\item By taking $\C$ to be an arbitrary $\V$-category enriched in an arbitrary symmetric monoidal closed category $\V$ with equalizers and $\J = \C \hookrightarrow \C$, we recover the structure--semantics adjunction between $\C$-theories and \emph{strongly tractable} $\V$-categories over $\C$ established by Dubuc in \cite{Dubucsemantics}. In the latter paper, it was also assumed that $\V$ is complete and well-powered. Given the equivalence between $\C$-theories and arbitrary $\V$-monads on $\C$ (see Example \ref{eleuthericexamples}), we also recover Dubuc's structure--semantics adjunction between $\V$-monads on $\C$ and strongly tractable $\V$-categories over $\C$ established in \cite[Page 74]{Dubucbook}. 

\item By taking $\C = \V$ a symmetric monoidal closed \emph{$\pi$-category} \cite{BorceuxDay} (e.g.~a complete and cocomplete cartesian closed category) and $\J = \SF(\V) \rightarrowtail \V$ the eleutheric system of arities consisting of the natural numbers, we recover the structure--semantics adjunction between $\SF(\V)$-theories and $\SF(\V)$-tractable $\V$-categories over $\V$ established by Borceux and Day in \cite[Theorem 2.4.2]{BorceuxDay}. 
\end{enumerate}
}
\end{egg_sub}

\subsection{Bounded and eleutheric subcategories of arities}
\label{boundedeleuthericsubsection}

In this subsection, we shall show that if $\J \hookrightarrow \C$ is an eleutheric subcategory of arities that satisfies the further condition of \emph{boundedness} \eqref{boundedVfunctor}, then not only is $\J$ itself strongly amenable, but every subcategory of arities that is \emph{contained} in $\J$ is strongly amenable (see Theorem \ref{boundedeleuthericamenable}). 

\begin{para_sub}
\label{boundedassumptions}
{
Throughout \S \ref{boundedeleuthericsubsection}, we assume that our symmetric monoidal closed $\V$ is complete and cocomplete, and that $\V$ is a \emph{closed factegory} \cite[Definition 6.1.2]{Pres}, meaning that $\V$ is equipped with an enriched (not necessarily proper) factorization system $(\E, \M)$ \cite{enrichedfact}. A \emph{$\V$-factegory} \cite[Definition 6.1.5]{Pres} is then a $\V$-category $\X$ equipped with an enriched factorization system $(\E_\X, \M_\X)$\footnote{We shall usually omit these subscripts when that is unlikely to cause confusion.} that is \emph{compatible} with $(\E, \M)$, meaning that each $\X(X, -) : \X \to \V$ ($X \in \ob\X$) sends $\M_\X$-morphisms to $\M$-morphisms. A $\V$-factegory $\X$ is \emph{proper} when $(\E_\X, \M_\X)$ is proper in the enriched sense, meaning that $\E_\X$ is contained in the $\V$-epimorphisms and $\M_\X$ in the $\V$-monomorphisms \cite[Definition 2.2]{enrichedfact}. If $\X$ is tensored and cotensored, then $(\E_\X, \M_\X)$ is proper in the enriched sense iff it is proper in the ordinary sense \cite[Proposition 2.4]{enrichedfact}. A \emph{cocomplete (proper) $\V$-factegory} \cite[Definition 6.1.5]{Pres} is a (proper) $\V$-factegory $\X$ that is cocomplete as a $\V$-category and has conical cointersections (i.e.~wide pushouts) of arbitrary families of $\E_\X$-morphisms with common domain.    

For the remainder of \S \ref{boundedeleuthericsubsection}, we let $\C$ be a cotensored $\V$-category that is also a cocomplete $\V$-factegory, and we suppose either that the enriched factorization system $(\E, \M) = (\E_\C, \M_\C)$ on $\C$ is proper or that $\C$ is $\E$-cowellpowered (meaning that each $C \in \ob\C$ has just a small set of $\E$-quotients, i.e.~isomorphism classes of $\E$-morphisms with domain $C$). 
}
\end{para_sub}

\begin{defn_sub}
\label{boundedVfunctor}
{
Let $\alpha$ be a regular cardinal, and let $\X$ and $\Y$ be $\V$-factegories with small conical $\alpha$-filtered colimits. A $\V$-functor $F : \X \to \Y$ is \textbf{$\alpha$-bounded} provided that for every small $\alpha$-filtered diagram $D : \A \to \X_0$ and every $\M$\emph{-cocone} $m = \left(m_A : DA \to X\right)_{A \in \A}$ on $D$ (meaning that $m$ is a cocone on $D$ such that each $m_A \in \M$), if the induced morphism $\overline{m} : \colim D \to X$ lies in $\E$, then the induced morphism $\overline{Fm} : \colim FD \to FX$ lies in $\E$. Thus, we say that $F$ is $\alpha$-bounded iff $F : \X \to \Y$ \emph{preserves the} $\E$\emph{-tightness of small} $\alpha$\emph{-filtered} $\M$\emph{-cocones}.\footnote{This terminology comes from Kelly \cite[2.3]{Kellytrans}.} If $(\E_\X, \M_\X)$ and $(\E_\Y, \M_\Y)$ are both the trivial factorization system $(\Iso, \All)$, then $F$ is $\alpha$-bounded iff $F$ preserves small conical $\alpha$-filtered colimits. 

An object $C$ of $\C$ is \textbf{$\alpha$-bounded} if the $\V$-functor $\C(C, -) : \C \to \V$ is $\alpha$-bounded. A subcategory of arities $\J \hookrightarrow \C$ is \textbf{$\alpha$-bounded} if $\J$ is small and each $J \in \ob\J$ is $\alpha$-bounded as an object of $\C$, and $\J \hookrightarrow \C$ is \textbf{bounded} if it is $\alpha$-bounded for some regular cardinal $\alpha$.  
}
\end{defn_sub}

\begin{egg_sub}
\label{boundedexamples}
{
We have the following examples of bounded subcategories of arities from \cite[Example 6.1.12]{Pres}:
\begin{enumerate}
[leftmargin=0pt,labelindent=0pt,itemindent=*,label=(\arabic*)]
\item Every small subcategory of arities in a \textit{locally bounded $\V$-category} $\C$ \cite{locbd} enriched in a \textit{locally bounded} closed category $\V$ \cite[Chapter 6]{Kelly} is bounded, with respect to the specified proper factorization system that $\C$ is required to carry as a locally bounded $\V$-category. In particular (by \cite[Example 5.14 and Proposition 7.1]{locbd}), every small subcategory of arities $\J \hookrightarrow \C$ in a locally presentable $\V$-category $\C$ enriched in a locally presentable closed category $\V$ is bounded.  

\item If small (conical) filtered colimits commute with (conical) finite powers in $\V$, i.e.~if each $(-)^n : \V \to \V$ ($n \in \N$) preserves small filtered colimits, then the system of arities $j : \SF(\V) \rightarrowtail \V$ consisting of the natural numbers \eqref{eleuthericexamples} is $\aleph_0$-bounded with respect to the trivial factorization system $(\Iso, \All)$, because $\V(n \cdot I, -) \cong (-)^n$ for each $n \in \N$. In fact, because the diagonal functor $\A \to \A \times \A$ for a small filtered category $\A$ is final, it suffices that each $X \times (-) : \V \to \V$ ($X \in \ob\V$) preserve small filtered colimits. In particular, this is true when $\V$ is cartesian closed, and more generally when $\V$ is a $\pi$-category \cite{BorceuxDay}. It is also true when $\V_0$ is locally finitely presentable, by \cite[3.8]{Kellystr}. Furthermore, this is also true for the symmetric monoidal closed category $\mathsf{Met}$ of extended metric spaces and non-expanding mappings by \cite[Corollary 2.4]{Metricmonads}, even though $\mathsf{Met}$ is not locally finitely presentable (although it is locally \emph{$\aleph_1$-presentable} by \cite[Examples 4.5(3)]{Metricclasses}) and is not known to be a $\pi$-category.  

\item The system of arities $\{I\} \hookrightarrow \V$ containing just the unit object is $\aleph_0$-bounded with respect to the trivial factorization system $(\Iso, \All)$. 

\item For a small $\V$-category $\A$, the subcategory of arities $\y_\A : \A^\op \rightarrowtail [\A, \V]$ is $\aleph_0$-bounded with respect to the trivial factorization system $(\Iso, \All)$.      
  
\item As in Example \ref{eleuthericexamples}, let $\Phi$ be a locally small class of small weights satisfying Axiom A of \cite{LR}, and let $\C = \Phi\Mod(\calT)$ be the $\V$-category of models of a $\Phi$-theory $\calT$. If $\V$ is a locally bounded closed category that is $\E$-cowellpowered, then $\y_\Phi : \calT^\op \rightarrowtail \Phi\Mod(\calT)$ is a bounded subcategory of arities with respect to a suitable proper factorization system carried by $\Phi\Mod(\calT)$.

\item Let $\Phi_\bbD$ be the class of small weights determined by a sound doctrine $\bbD$ as in Example \ref{eleuthericexamples}(7), where $\V$ is locally $\bbD$-presentable as a $\tensor$-category. Since $\bbD$ is a small class of small categories, we can find a regular cardinal $\alpha$ such that each $\D \in \bbD$ is $\alpha$-small. It then follows that $\y_{\Phi_{\bbD}} : \calT^\op \rightarrowtail \Phi_{\bbD}\Mod(\calT)$ is an $\alpha$-bounded subcategory of arities with respect to the trivial factorization system $(\Iso, \All)$.   
\end{enumerate}
}
\end{egg_sub}

In \cite[Theorem 9.1]{EP}, results pertaining to presentations of $\J$-ary $\V$-monads were used to prove that if $\J \hookrightarrow \C$ is contained in some bounded and eleutheric subcategory of arities, then $U^\scrT : \scrT\Alg^! \to \C$ is strictly monadic for every $\J$-pretheory $\scrT$. From this we obtain the following result: 

\begin{theo_sub}
\label{boundedeleuthericamenable}
Let $\J \hookrightarrow \C$ be a subcategory of arities that is contained in some bounded and eleutheric subcategory of arities. Then $\J \hookrightarrow \C$ is strongly amenable. \qed
\end{theo_sub}

\begin{egg_sub}
\label{boundedeleuthericexamples}
{
Every small subcategory of arities in a locally presentable $\V$-category $\C$ enriched in a locally presentable closed category $\V$ is contained in some bounded and eleutheric subcategory of arities by Examples \ref{eleuthericexamples}(1) and \ref{boundedexamples}(1). By Example  \ref{boundedexamples}(1), every small and eleutheric subcategory of arities in a locally bounded $\V$-category enriched in a locally bounded closed category $\V$ satisfies the hypotheses of Theorem \ref{boundedeleuthericamenable}. In view of Example \ref{boundedexamples}, the examples (2), (3), (5), and (7) of Example  \ref{eleuthericexamples} also satisfy the hypotheses of Theorem \ref{boundedeleuthericamenable}.   
}
\end{egg_sub}

\noindent From Theorems  \ref{strsemadjunction}, \ref{strsemcorestricted},  \ref{monadtheoryequivalence}, and \ref{boundedeleuthericamenable} we immediately obtain the following result:

\begin{theo_sub}
\label{boundedeleuthericthm}
Let $\J \hookrightarrow \C$ be a subcategory of arities that is contained in some bounded and eleutheric subcategory of arities. Then we have an idempotent structure--semantics adjunction
$\Str \dashv \Sem : \Preth_{\underJ}(\C)^\op \to \J\Tract(\C)$ between $\J$-pretheories and $\J$-tractable $\V$-categories over $\C$, from which we obtain an idempotent adjunction $\sfm \dashv \sft : \Mnd(\C) \to \Preth_{\underJ}(\C)$ between $\J$-pretheories and $\V$-monads on $\C$. This adjunction yields an equivalence $\Mnd_{\underJ}(\C) \simeq \Th_{\underJ}(\C)$ between $\J$-theories and $\J$-nervous $\V$-monads on $\C$ (which coincide with $\J$-ary $\V$-monads on $\C$ if $\J$ itself is eleutheric, \ref{JnervousJarycor}). \qed
\end{theo_sub}

\begin{rmk_sub}
\label{BGboundedeleutheric}
{
One of the central results of Bourke and Garner \cite[Theorem 6]{BourkeGarner} may be formulated in the new axiomatics of the present paper as the result that every small subcategory of arities in a locally presentable $\V$-category $\C$ enriched in a locally presentable closed category $\V$ is strongly amenable. In view of Example \ref{boundedeleuthericexamples}, we may now recover this result from Theorem \ref{boundedeleuthericamenable}. 
}
\end{rmk_sub}

\subsection{Small subcategories of arities in the locally bounded sketchable context}
\label{locallyboundedsubsection}

In this subsection, we shall show that if $\V$ is a locally bounded closed category, then every small subcategory of arities $\J \hookrightarrow \C$ in a \emph{$\V$-sketchable} $\V$-category $\C$ \eqref{locallyboundedassumptions} is strongly amenable.

\begin{para_sub}
\label{locallybounded}
{
We assume throughout this subsection that $\V$ is locally bounded as a closed category; see \cite[Chapter 6]{Kelly} and \cite[Definition 5.1]{locbd} for two equivalent formulations of the definition, which we shall not need. Note in particular that $\V$ is complete and cocomplete, and has an enriched proper factorization system $(\E, \M)$ \eqref{boundedassumptions}. We have the following examples of locally bounded closed categories from \cite{locbd}:
\begin{itemize}[leftmargin=*]
\item Kelly showed in \cite[Chapter 6]{Kelly} that locally bounded closed categories include all locally presentable closed categories and the cartesian closed categories $\mathsf{CGTop}$, $\mathsf{CGHTop}$, and $\mathsf{QTop}$ of compactly generated topological spaces, compactly generated Hausdorff spaces, and quasitopological spaces.
\item Every commutative unital quantale (i.e.~a symmetric monoidal closed category that is posetal and cocomplete) is a locally bounded closed category \cite[Example 5.15]{locbd}.

\item Given that $\V$ is locally bounded as a closed category, it is shown in \cite[Theorem 5.6]{KellyLack} that $\V\Cat$ is locally bounded as a closed category.  

\item Any cocomplete locally cartesian closed category with a (small) generator and arbitrary cointersections of epimorphisms is a locally bounded cartesian closed category \cite[Corollary 5.11]{locbd}. 

\item Specializing the previous example, the concrete quasitoposes of Dubuc \cite{Dubucquasitopoi}, and in particular the convenient categories of smooth spaces of Baez--Hoffnung \cite{Baezsmooth}, are locally bounded cartesian closed categories \cite[Example 5.17]{locbd}. These categories are not in general locally presentable, and include such examples as the categories of bornological sets, pseudotopological spaces, and various categories of convergence spaces \cite{Dubucquasitopoi}.

\item Generalizing the previous example, if $\V_0$ is a topological category over $\Set$, then the closed category $\V$ is locally bounded \cite[Proposition 5.13(1)]{locbd}. In particular, every cartesian closed topological category over $\Set$ is a locally bounded cartesian closed category. For example, given a \emph{productive class} $\mathcal{C}$ of topological spaces \cite{EscardoCCC}, the full subcategory $\mathsf{Top}_{\mathcal{C}}$ of $\mathsf{Top}$ consisting of the \emph{$\mathcal{C}$-generated spaces} is a locally bounded cartesian closed category \cite[Example 5.19]{locbd}. Examples of $\mathsf{Top}_{\mathcal{C}}$ include the categories of compactly generated spaces, core compactly generated spaces, locally compactly generated spaces, and sequentially generated spaces \cite[3.3]{EscardoCCC}. 

Moreover, every topological category over $\Set$ carries a canonical (generally non-cartesian) symmetric monoidal closed structure \cite[\S 2.2]{Sato} with respect to which it is a locally bounded closed category \cite[Example 5.18]{locbd}. For example, $\mathsf{Top}$ and $\mathsf{Meas}$ (the category of measurable spaces and measurable maps) become locally bounded closed categories with respect to this (non-cartesian) symmetric monoidal closed structure. 
\end{itemize}
}
\end{para_sub}  

\begin{para_sub}
\label{locallyboundedassumptions}
{
For the remainder of \S\ref{locallyboundedsubsection}, we fix a class $\Phi$ of small weights and a small $\Phi$-theory $\mathfrak{T}$, i.e.~a small $\Phi$-complete $\V$-category. We have the full sub-$\V$-category $\Phi\Cts(\mathfrak{T}, \V) \hookrightarrow [\mathfrak{T}, \V]$ consisting of the $\mathfrak{T}$-models in $\V$, i.e.~the $\Phi$-continuous $\V$-functors $\mathfrak{T} \to \V$. We assume for the remainder of \S\ref{locallyboundedsubsection} that the $\V$-category $\C$ is equivalent to $\Phi\Cts(\mathfrak{T}, \V)$ for some small $\Phi$-theory $\mathfrak{T}$, which we express by saying that $\C$ is a \textbf{$\V$-sketchable} $\V$-category; we shall assume without loss of generality that $\C = \Phi\Cts(\mathfrak{T}, \V)$.    
}
\end{para_sub}

\begin{para_sub}
\label{sketchablerepresentables}
{
Since $\C = \Phi\Cts(\mathfrak{T}, \V)$ contains the representables, the Yoneda embedding $\y : \mathfrak{T}^\op \rightarrowtail [\mathfrak{T}, \V]$ factors through $\Phi\Cts(\mathfrak{T}, \V)$ by way of a fully faithful $\V$-functor $\y_\Phi : \mathfrak{T}^\op \rightarrowtail \Phi\Cts(\mathfrak{T}, \V)$, which is dense by \cite[Theorem 5.13]{Kelly} because $\y$ is dense. If $\J \hookrightarrow \C = \Phi\Cts(\mathfrak{T}, \V)$ is a small full sub-$\V$-category that contains the representables $\mathfrak{T}(T, -) : \mathfrak{T} \to \V$ ($T \in \ob\mathfrak{T}$), then the dense and fully faithful $\y_\Phi : \mathfrak{T}^\op \rightarrowtail \Phi\Cts(\mathfrak{T}, \V)$ factors through $j : \J \hookrightarrow \Phi\Cts(\mathfrak{T}, \V)$ by way of a fully faithful $\V$-functor $k : \mathfrak{T}^\op \rightarrowtail \J$, so that $j$ and $k$ are both dense by \cite[Theorem 5.13]{Kelly}. In this case, we therefore have that $\J \hookrightarrow \Phi\Cts(\mathfrak{T}, \V)$ is a small subcategory of arities.
}
\end{para_sub}

\begin{lem_sub}
\label{jnervelem}
Let $j : \J \hookrightarrow \Phi\Cts(\mathfrak{T}, \V)$ be a small full sub-$\V$-category that contains the representables $\mathfrak{T}(T, -) : \mathfrak{T} \to \V$ ($T \in \ob\mathfrak{T}$). Then a presheaf $X : \J^\op \to \V$ is a $j$-nerve iff (i) $X$ is a right Kan extension of its restriction along $k^\op : \mathfrak{T} \rightarrowtail \J^\op$ \eqref{sketchablerepresentables} and (ii) $X \circ k^\op : \mathfrak{T} \to \V$ is $\Phi$-continuous.  
\end{lem_sub}

\begin{proof}
The $\V$-functor $\left[k^\op, 1\right] : [\J^\op, \V] \to [\mathfrak{T}, \V]$ has right adjoint $\Ran_{k^\op} : [\mathfrak{T}, \V] \to [\J^\op, \V]$, which is fully faithful because $k$ is fully faithful. Since $\V$ is locally bounded, we know by \cite[Theorem 11.8]{locbd} that $\Phi\Cts(\mathfrak{T}, \V)$ is reflective in $[\mathfrak{T}, \V]$. So the composite \[ \Phi\Cts(\mathfrak{T}, \V) \hookrightarrow [\mathfrak{T}, \V] \xrightarrow{\Ran_{k^\op}} [\J^\op, \V] \] is a fully faithful right adjoint, which is in fact isomorphic to $N_j : \Phi\Cts(\mathfrak{T}, \V) \to \left[\J^\op, \V\right]$, as we now show. Each $J \in \ob\J \subseteq \ob\left(\Phi\Cts(\mathfrak{T}, \V)\right)$ is a $\V$-functor $J : \mathfrak{T} \to \V$ and so by the Yoneda lemma is isomorphic to $[\mathfrak{T}, \V](\y-, J) = \J(k-, J)$. Invoking \cite[(3.7) and (4.9)]{Kelly}, we then have
\[ \left(\Ran_{k^\op} M\right)J = \left\{\J\left(k-, J\right), M\right\} \cong \{J, M\} \cong [\mathfrak{T}, \V](J, M) = \Phi\Cts(\mathfrak{T}, \V)(J, M) = \left(N_j M\right)J \] 
$\V$-naturally in $M \in \Phi\Cts(\mathfrak{T}, \V)$ and $J \in \J^\op$. So a presheaf $X : \J^\op \to \V$ is a $j$-nerve (i.e.~lies in the essential image of $N_j$) iff $X$ lies in the essential image of the reflective embedding $\Phi\Cts(\mathfrak{T}, \V) \hookrightarrow [\mathfrak{T}, \V] \xrightarrow{\Ran_{k^\op}} \left[\J^\op, \V\right]$, which is equivalent to $X$ satisfying (i) and (ii).  
\end{proof}

\begin{para_sub}
\label{density_pres}
{
Let $K : \A \to \B$ be a $\V$-functor. Recall from \cite[\S 5.4]{Kelly} that a weighted colimit in $\B$ is \emph{$K$-absolute} if it is preserved by the nerve $\V'$-functor $N_K : \B \to \left[\A^\op, \V\right]$. A \emph{density presentation} for a fully faithful $\V$-functor $K : \A \rightarrowtail \B$ is then a class $\Psi$ of weighted diagrams in $\B$ such that each diagram in $\Psi$ has a $K$-absolute colimit and $\B$ is the closure of $\A$ under $\Psi$-colimits. By \cite[Theorem 5.19]{Kelly}, a fully faithful $\V$-functor $K : \A \rightarrowtail \B$ is dense iff it has a density presentation, and when $\A$ is small we can (and shall) assume that the density presentation consists of \emph{small} weighted diagrams.      
}
\end{para_sub}

\begin{prop_sub}
\label{jnerveprop}
Let $j : \J \hookrightarrow \Phi\Cts(\mathfrak{T}, \V)$ be a small full sub-$\V$-category that contains the representables $\mathfrak{T}(T, -) : \mathfrak{T} \to \V$ ($T \in \ob\mathfrak{T}$), and let $\Psi$ be a density presentation for the fully faithful and dense $\V$-functor $k : \mathfrak{T}^\op \rightarrowtail \J$ \eqref{sketchablerepresentables}.  Then a presheaf $X : \J^\op \to \V$ is a $j$-nerve iff $X$ preserves $\Psi$-limits and $X \circ k^\op : \mathfrak{T} \to \V$ is $\Phi$-continuous. 

Consequently, for a $\J$-pretheory $\scrT$, a $\V$-functor $M : \scrT \to \V$ is a (non-concrete) $\scrT$-algebra iff $M \circ \tau : \J^\op \to \V$ preserves $\Psi$-limits and $M \circ \tau \circ k^\op : \mathfrak{T} \to \V$ is $\Phi$-continuous.  
\end{prop_sub}

\begin{proof}
The second assertion follows from the first because a $\V$-functor $M : \scrT \to \V$ is a $\scrT$-algebra iff $M \circ \tau : \J^\op \to \V$ is a $j$-nerve. We know from Lemma \ref{jnervelem} that a presheaf $X : \J^\op \to \V$ is a $j$-nerve iff $X \cong \Ran_{k^\op} \left(X \circ k^\op\right)$ and $X \circ k^\op$ is $\Phi$-continuous. But $X \cong \Ran_{k^\op} \left(X \circ k^\op\right)$ iff $X$ preserves $\Psi$-limits by \cite[Theorem 5.29]{Kelly}. 
\end{proof}

\begin{para_sub}
\label{sketchpara}
{
Let $\J \hookrightarrow \Phi\Cts(\mathfrak{T}, \V)$ be a small full sub-$\V$-category that contains the representables $\mathfrak{T}(T, -) : \mathfrak{T} \to \V$ ($T \in \ob\mathfrak{T}$), and let $\Psi$ be a density presentation for the fully faithful and dense $\V$-functor $k : \mathfrak{T}^\op \rightarrowtail \J$ \eqref{sketchablerepresentables}. Given a $\J$-pretheory $\scrT$, we can equip the small $\V$-category $\scrT$ with the structure of an \emph{enriched limit sketch} $(\scrT, \Gamma)$ (see \cite[\S 6.3]{Kelly} or \cite[\S 11]{locbd}) as follows. We equip $\scrT$ with the cylinders obtained by applying the $\V$-functor $\tau : \J^\op \to \scrT$ to the $\Psi$-limit cylinders in $\J^\op$, and with the cylinders obtained by applying the $\V$-functor $\tau \circ k^\op : \mathfrak{T} \to \scrT$ to the $\Phi$-limit cylinders in $\mathfrak{T}$. A \emph{model} of the limit sketch $(\scrT, \Gamma)$ is then a $\V$-functor $M : \scrT \to \V$ that sends all of these cylinders to limit cylinders in $\V$, and we write $(\scrT, \Gamma)\Mod \hookrightarrow [\scrT, \V]$ for the full sub-$\V$-category consisting of the models of $(\scrT, \Gamma)$.
}
\end{para_sub}

\begin{prop_sub}
\label{sketchprop}
Let $\J \hookrightarrow \Phi\Cts(\mathfrak{T}, \V)$ be a small full sub-$\V$-category that contains the representables $\mathfrak{T}(T, -) : \mathfrak{T} \to \V$ ($T \in \ob\mathfrak{T}$), and let $\Psi$ be a density presentation for the fully faithful and dense $\V$-functor $k : \mathfrak{T}^\op \rightarrowtail \J$ \eqref{sketchablerepresentables}. Let $\scrT$ be a $\J$-pretheory, and let $(\scrT, \Gamma)$ be the associated limit sketch of \eqref{sketchpara}. Then $\scrT\Alg = (\scrT, \Gamma)\Mod$ as full sub-$\V$-categories of $[\scrT, \V]$. 
\end{prop_sub}

\begin{proof}
It is immediate that a $\V$-functor $M : \scrT \to \V$ is a model of the limit sketch $(\scrT, \Gamma)$ iff $M \circ \tau : \J^\op \to \V$ preserves $\Psi$-limits and $M \circ \tau \circ k^\op : \mathfrak{T} \to \V$ is $\Phi$-continuous. The result then immediately follows from Proposition \ref{jnerveprop}.
\end{proof}

\begin{prop_sub}
\label{locbdalgebras}
Let $\J \hookrightarrow \Phi\Cts(\mathfrak{T}, \V)$ be a small full sub-$\V$-category that contains the representables $\mathfrak{T}(T, -) : \mathfrak{T} \to \V$ ($T \in \ob\mathfrak{T}$), and let $\scrT$ be a $\J$-pretheory. Then $\scrT\Alg \hookrightarrow [\scrT, \V]$ is reflective. If $\V$ is $\E$-cowellpowered, then $\scrT\Alg$ is a locally bounded and $\E$-cowellpowered $\V$-category, and the inclusion $\scrT\Alg \hookrightarrow [\scrT, \V]$ is a bounding right adjoint \cite[Definition 4.33]{locbd}. 
\end{prop_sub}

\begin{proof}
The result follows immediately from Proposition \ref{sketchprop} and \cite[Theorem 11.5]{locbd}, noting that the first assertion also follows from \cite[Theorem 6.11]{Kelly} (in view of Proposition \ref{sketchprop}).  
\end{proof}

\begin{lem_sub}
\label{LB_subcat_lem}
Let $j : \J \hookrightarrow \Phi\Cts(\mathfrak{T}, \V)$ be a small full sub-$\V$-category that contains the representables $\mathfrak{T}(T, -) : \mathfrak{T} \to \V$ ($T \in \ob\mathfrak{T}$). Then $\J$ is a strongly amenable subcategory of arities. 
\end{lem_sub}

\begin{proof}
Let $\scrT$ be a $\J$-pretheory. Then $\scrT\Alg^!$ is a $\V$-category because $\J$ is small and $\V$ is complete. In view of \ref{concreteequivalence}, it suffices to show that the $\V$-functor $W^\scrT : \scrT\Alg \to j\Ner(\V)$ has a left adjoint. The composite $\V$-functor $\scrT\Alg \xrightarrow{W^\scrT} j\Ner(\V) \hookrightarrow \left[\J^\op, \V\right]$ factors as the composite $\scrT\Alg \hookrightarrow [\scrT, \V] \xrightarrow{\left[\tau, 1\right]} \left[\J^\op, \V\right]$. The inclusion $\scrT\Alg \hookrightarrow [\scrT, \V]$ has a left adjoint by Proposition \ref{locbdalgebras}, while $[\tau, 1]$ has a left adjoint given by left Kan extension along $\tau$ (since $\scrT$ is small and $\V$ is cocomplete). So the composite $\V$-functor $\scrT\Alg \xrightarrow{W^\scrT} j\Ner(\V) \hookrightarrow \left[\J^\op, \V\right]$ has a left adjoint, which restricts to a left adjoint for $W^\scrT$. 
\end{proof}

\noindent We can now prove our central results of this subsection:

\begin{theo_sub}
\label{locbdtractable}
Let $\V$ be a locally bounded closed category, and let $\C$ be a $\V$-sketchable $\V$-category \eqref{locallyboundedassumptions}.
\begin{enumerate}[leftmargin=*]
\item Every small subcategory of arities $\J \hookrightarrow \C$ is strongly amenable.
\item Every small full sub-$\V$-category $\J \hookrightarrow \C$ is contained in a small and strongly amenable subcategory of arities.
\end{enumerate}
\end{theo_sub}

\begin{proof}
Without loss of generality, $\C = \Phi\Cts(\mathfrak{T}, \V)$ for a small $\Phi$-theory $\mathfrak{T}$, by \ref{locallyboundedassumptions}. It suffices to prove (2), because (1) then follows by Theorem \ref{pushoutthm}. Given $\J \hookrightarrow \C$ as in (2), let us write $\scrK := \J \cup \left\{\mathfrak{T}(T, -) \mid T \in \ob\mathfrak{T}\right\} \hookrightarrow \C$. Then $\scrK$ is a small subcategory of arities \eqref{sketchablerepresentables} that contains $\J$ and is strongly amenable by Lemma \ref{LB_subcat_lem}. 
\end{proof}

\noindent The $\V$-category $\V$ is itself $\V$-sketchable when we take $\Phi$ to be  the empty class of weights and the small $\Phi$-theory (i.e.~$\V$-category) $\mathfrak{T}$ to be the unit $\V$-category. Any small full sub-$\V$-category of $\V$ that contains the unit object is automatically dense by \ref{sketchablerepresentables}, and hence is a small subcategory of arities. We then immediately obtain the following result from Lemma \ref{LB_subcat_lem} or Theorem \ref{locbdtractable}: 

\begin{theo_sub}
\label{locbdVcase}
Let $\V$ be a locally bounded closed category. Then every small full sub-$\V$-category $\J \hookrightarrow \V$ that contains the unit object is a strongly amenable subcategory of arities. \qed
\end{theo_sub}

\noindent From Theorems \ref{strsemadjunction}, \ref{strsemcorestricted},  \ref{monadtheoryequivalence}, and \ref{locbdtractable} we immediately obtain the following result:

\begin{theo_sub}
\label{locbdthm}
Let $\V$ be a locally bounded closed category, and let $\J \hookrightarrow \C$ be any small subcategory of arities in a $\V$-sketchable $\V$-category $\C$ \eqref{locallyboundedassumptions}. Then we have an idempotent structure--semantics adjunction
$\Str \dashv \Sem : \Preth_{\underJ}(\C)^\op \to \J\Tract(\C)$ between $\J$-pretheories and $\J$-tractable $\V$-categories over $\C$, from which we obtain an idempotent adjunction $\sfm \dashv \sft : \Mnd(\C) \to \Preth_{\underJ}(\C)$ between $\J$-pretheories and $\V$-monads on $\C$. This adjunction restricts to an equivalence $\Mnd_{\underJ}(\C) \simeq \Th_{\underJ}(\C)$ between $\J$-theories and $\J$-nervous $\V$-monads on $\C$. \qed
\end{theo_sub}

\noindent From Proposition \ref{locbdalgebras} and Theorem \ref{locbdtractable} (together with Theorem \ref{monadtheoryequivalence}) we also immediately deduce the following result about $\V$-categories of algebras for $\J$-nervous $\V$-monads:

\begin{prop_sub}
\label{Jnervouslocallybounded}
Let $\V$ be a locally bounded and $\E$-cowellpowered closed category, let $\J \hookrightarrow \C$ be any small subcategory of arities in a $\V$-sketchable $\V$-category $\C$ \eqref{locallyboundedassumptions}, and let $\T$ be a $\J$-nervous $\V$-monad on $\C$. Then $\T\Alg$ is a locally bounded and $\E$-cowellpowered $\V$-category, as is any $\J$-algebraic $\V$-category over $\C$. \qed
\end{prop_sub}

\begin{rmk_sub}
\label{BGlocbd}
{
In Remark \ref{BGboundedeleutheric} we indicated one way in which certain results of the present paper specialize to recover one of the main results of Bourke and Garner \cite{BourkeGarner}, which, when formulated in the new axiomatics of the present paper, is the statement that every small subcategory of arities in a locally presentable $\V$-category enriched in a locally presentable closed category $\V$ is strongly amenable. The results of this subsection provide yet another way to obtain this central result of \cite{BourkeGarner}, as that result follows from Theorem \ref{locbdtractable}, the fact that every locally presentable closed category is a locally bounded closed category \cite[Example 5.14]{locbd}, and the fact that every locally presentable $\V$-category enriched in a locally presentable closed category $\V$ is $\V$-sketchable \cite[7.4]{Kellystr}.    
}
\end{rmk_sub}

\bibliographystyle{amsplain}
\bibliography{mybib}

\end{document}